\newtheorem{thmx}{Theorem}
\numberwithin{equation}{section}
\newtheorem{theorem}{Theorem}[section]
\newtheorem{prop}[theorem]{Proposition}
\newtheorem{lem}[theorem]{Lemma}
\newtheorem{coro}[theorem]{Corollary}
\newtheorem{ques}[theorem]{Question}
\theoremstyle{definition}
\newtheorem{defi}[theorem]{Definition}
\newtheorem{exam}[theorem]{Example}
\newtheorem{rema}[theorem]{Remark}
\newtheorem{nota}[theorem]{Notation}
\newtheorem*{question_intro}{Question}
\DeclareMathOperator{\lk}{\mathrm{lk}}
\DeclareMathOperator{\pd}{\mathrm{pd}}
\DeclareMathOperator{\basis}{\mathbf{1}}
\newcommand{\Ext}{{\text{Ext}}}
\newcommand{\Tor}{{\text{Tor}}}
\newcommand{\Hilb}{{\text{Hilb}}}
\newcommand{\Hom}{{\text{Hom}}}
\newcommand{\cok}{{\text{coker}}}
\newcommand{\init}{{\mathrm{in}}}
\newcommand{\lt}{{\mathrm{in}}}
\newcommand{\reg}{{\text{\normalfont{reg}}}}
\newcommand{\Z}{{\mathbb Z}}
\newcommand{\F}{{\mathbb F}}
\newcommand{\asp}{{\hspace{0pt}a\hspace{-0.3pt}}}
\newcommand{\bsp}{{\hspace{0.28pt}b\hspace{0.3pt}}}
\setlist[enumerate]{label=\roman*.}
\protected\def\ignorethis#1\endignorethis{}
\let\endignorethis\relax
\def\TOCstop{\addtocontents{toc}{\ignorethis}}
\def\TOCstart{\addtocontents{toc}{\endignorethis}}
\def\l@subsection{\@tocline{2}{0pt}{2pc}{6pc}{}} 
\title{\texorpdfstring{Koszul Gorenstein algebras\\ from Cohen--Macaulay simplicial complexes}{Koszul Gorenstein algebras from Cohen-Macaulay simplicial complexes}}
\author{Alessio D'Al\`i}
\address{Institut f\"ur Mathematik, Universit\"at Osnabr\"uck, 49069 Osnabr\"uck, Germany (current address) and \newline
Mathematics Institute, University of Warwick, CV4 7AL Coventry, United Kingdom}
\email{alessio.dali@uni-osnabrueck.de}
\author{Lorenzo Venturello}
\address{KTH Royal Institute of Technology, 11428 Stockholm, Sweden}
\email{lven@kth.se ~ lorenzo.venturello@hotmail.it}
\subjclass[2020]{Primary: 05E40; Secondary: 16S37, 13H10, 13D02, 13F65.} 
\keywords{Koszul algebra, quadratic algebra, Gorenstein ring, flag simplicial complex, Cohen--Macaulay simplicial complex, minimal free resolution, Bier sphere, Nagata idealization}
\begin{document}
	\maketitle
	\begin{abstract}
	    We associate with every pure flag simplicial complex $\Delta$ a standard graded Gorenstein $\mathbb{F}$-algebra $R_{\Delta}$ whose homological features are largely dictated by the combinatorics and topology of $\Delta$. 
	    As our main result, we prove that the residue field $\mathbb{F}$ has a $k$-step linear $R_{\Delta}$-resolution if and only if $\Delta$ satisfies Serre's condition $(S_k)$ over $\mathbb{F}$, and that $R_{\Delta}$ is Koszul if and only if $\Delta$ is Cohen--Macaulay over $\mathbb{F}$. Moreover, we show that $R_{\Delta}$ has a quadratic Gr\"{o}bner basis if and only if $\Delta$ is shellable.
	    We give two applications: first, we construct quadratic Gorenstein $\mathbb{F}$-algebras which are Koszul if and only if the characteristic of $\mathbb{F}$ is not in any prescribed set of primes. Finally, we prove that whenever $R_{\Delta}$ is Koszul the coefficients of its $\gamma$-vector alternate in sign, settling in the negative an algebraic generalization of a conjecture by Charney and Davis.
	\end{abstract}

	{
	\hypersetup{linkcolor=black}
	\setcounter{tocdepth}{1}
	\tableofcontents
	
	}
	

	\section{Introduction}
	It is a common line of thought in combinatorial commutative algebra to build correspondences between rings and objects of a more combinatorial nature. This can be insightful for both sides of the story. A prototypical example is Stanley--Reisner theory, where topological and combinatorial properties of simplicial complexes are studied via quotients of polynomial rings by squarefree monomial ideals. In this article we follow the same general principle and associate with any pure simplicial complex $\Delta$ a non-monomial \emph{Gorenstein} standard graded $\mathbb{F}$-algebra $R_{\Delta}$, where $\mathbb{F}$ is a field. If the complex $\Delta$ is flag and satisfies some mild homological condition, the algebra $R_{\Delta}$ will moreover be quadratic.
	
	In combinatorics, Gorenstein algebras show up as Stanley--Reisner rings of homology spheres, as well as Ehrhart rings of certain lattice polytopes. While Gorenstein rings are very rich in structure and well-studied both in algebra and combinatorics, it is not always so easy to construct examples with prescribed features, e.g.~with fixed $h$-vector. A classical and useful tool for this purpose is Nagata idealization, prominently featured in a recent article by Mastroeni, Schenck and Stillman \cite{MSSI}.
	
	A purely combinatorial way to construct a sphere from any simplicial complex $\Delta$ was introduced by Thomas Bier \cite{Bie} (see also \cite{BPSZ, Mur, DFN}): such a \emph{Bier sphere} is PL and simultaneously contains both $\Delta$ and its Alexander dual. In this paper we will take a twist on this construction and crucially exploit a beautiful result by Terai and Yanagawa \cite[Corollary 3.7]{Y} relating Serre's $(S_k)$ property on $\Delta$ to the linearity of the resolution of the Stanley--Reisner ideal of the Alexander dual of $\Delta$.
	
	Our recipe to build the ring $R_{\Delta}$ is as follows: said $I_{\Delta} \subseteq \F[x_1, \ldots, x_n]$ the Stanley--Reisner ideal of a pure simplicial complex $\Delta$ on $n$ vertices, we consider a new complex $\Gamma$ on $2n$ vertices whose Stanley--Reisner ideal is \[I_{\Gamma} := I_{\Delta}+(x_iy_i: i\in [n]) \subseteq \F[x_1, \ldots, x_n, y_1, \ldots, y_n].\] Such a complex turns out to be a PL ball whose boundary is the Bier sphere associated with $\Delta$, as already noted by Murai in \cite{Mur}. Taking inspiration from \cite{MSSI}, we then define the ring $R_{\Delta}$ as the Nagata idealization of the canonical module of $\mathbb{F}[\Gamma]$.
	
	The main contribution of the present paper is to relate the homological features of the links of $\Delta$ with the behaviour of the minimal free resolution of the residue field $\mathbb{F}$ as an $R_{\Delta}$-module, under the extra assumption that $\Delta$ is \emph{flag} (i.e., its Stanley--Reisner ideal is quadratically generated). In order to do this, we build some technical results comparing the resolutions of certain monomial ideals over different rings: see \Cref{cor:same_reg} and the lemmas preceding it. Our first result is as follows:
	
	\begin{thmx}[\Cref{thm: s_k is k-linear}]
	    Let $\Delta$ be a pure flag simplicial complex. The minimal resolution of the residue field $\F$ as an $R_{\Delta}$-module is linear for $k$ steps if and only if $\Delta$ satisfies Serre's condition $(S_k)$ over $\mathbb{F}$.
	\end{thmx}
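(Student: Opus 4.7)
The plan is to pass through the Stanley--Reisner ring $A := \F[\Gamma]$ and its canonical module $\omega_A$, and eventually to invoke the Terai--Yanagawa theorem relating Serre's condition $(S_k)$ on $\Delta$ to the linearity of the resolution of the Alexander dual Stanley--Reisner ideal $I_{\Delta^\vee}$ over $S' := \F[x_1,\ldots,x_n]$. First I would record two easy structural observations: since $\Delta$ is flag, $I_\Gamma = I_\Delta + (x_i y_i : i \in [n])$ is quadratic, so $\Gamma$ is flag; and since $\Gamma$ is a PL ball it is shellable, hence Cohen--Macaulay over every field. By Fr\"oberg's criterion $A$ is then Koszul, so the minimal graded free $A$-resolution of $\F$ is linear in every homological degree.

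The first main step is a reduction lemma for the Nagata idealization. Writing $R_\Delta = A \ltimes \omega_A(-c)$ with the shift $c$ that makes $R_\Delta$ standard graded, the short exact sequence of $R_\Delta$-modules
\[
0 \longrightarrow \omega_A(-c) \longrightarrow R_\Delta \longrightarrow A \longrightarrow 0,
\]
together with the squares-zero relation $\omega_A \cdot \omega_A = 0$ in $R_\Delta$, allows one to assemble a minimal graded free $R_\Delta$-resolution of $\F$ from the $A$-resolutions of $\F$ and of $\omega_A$ via an iterated mapping-cone construction (equivalently, by the Poincar\'e series identity for trivial extensions). Careful bookkeeping of internal degrees then shows that the $R_\Delta$-resolution of $\F$ is linear for $k$ steps if and only if the $A$-resolution of $\omega_A$ is linear in the corresponding range; the Koszulness of $A$ takes care of the $\F$-half automatically.

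The second main step identifies this $A$-linearity condition for $\omega_A$ with a linearity condition over $S'$ for the Alexander dual ideal $I_{\Delta^\vee}$. Because $\Gamma$ is a Cohen--Macaulay ball whose boundary is the Bier sphere of $\Delta$, the canonical module $\omega_A$ admits an explicit description as the (shifted) ideal of $A$ generated by the monomials supported on interior faces of $\Gamma$. The technical comparison lemmas culminating in \Cref{cor:same_reg} then transport the minimal free $A$-resolution of $\omega_A$ to a minimal free $S'$-resolution, matching it with the minimal resolution of $I_{\Delta^\vee}$ up to the appropriate shift. This passage through the three rings $A$, $S := \F[x_1,\ldots,x_n,y_1,\ldots,y_n]$ and $S'$ is the technical crux, and is exactly where the preparatory work of the paper is used.

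The final step is the Terai--Yanagawa theorem \cite[Corollary 3.7]{Y}: $I_{\Delta^\vee}$ has a $k$-step linear $S'$-resolution if and only if $\F[\Delta]$ satisfies Serre's condition $(S_k)$. Chaining the three equivalences yields the theorem. The hardest step I expect to be the second one: pinning down the exact internal-degree shifts and verifying that the $A$-Betti table of $\omega_A$ agrees, strip by strip, with the $S'$-Betti table of $I_{\Delta^\vee}$ requires precisely the delicate comparison across the three rings that the earlier lemmas are designed to handle.
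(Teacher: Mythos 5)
Your proposal is correct and follows essentially the same route as the paper: Gulliksen's Poincar\'e series identity for the idealization (\Cref{lem: Poincare series}, packaged as \Cref{lem: from omega to F}) reduces $k$-step linearity of the $R_{\Delta}$-resolution of $\F$ to $(k-1)$-step linearity of the $\F[\Gamma]$-resolution of $\omega_{\F[\Gamma]}$, which the comparison results culminating in \Cref{cor:same_reg} relate to the resolution of the (extended) Alexander dual ideal, and \Cref{thm:ERTY} finishes. The only imprecision is your claim that the Betti tables agree ``strip by strip'': they do not (the $\F[\Gamma]$-table is an infinite echo of the finite polynomial-ring table, cf.\ \Cref{rem:echo}); what transfers is the vanishing of the strands above the linear one within the first $k$ steps, which is exactly what \Cref{cor:same_reg}.v provides and is all the argument needs.
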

	 Note that flagness is indeed necessary for the resolution of $\F$ to have a chance to be linear: as the monomials generating $I_{\Delta}$ lie in a minimal generating set of the ideal defining $R_{\Delta}$, the second step of the $R_{\Delta}$-resolution of $\mathbb{F}$ fails to be linear whenever $\Delta$ is not flag.
	 
	Serre's condition $(S_k)$ is an algebraic property that, when formulated for the Stanley--Reisner ring of $\Delta$, translates to a certain vanishing condition about the homology of its links. Reisner's criterion \cite{Rei} is precisely this translation when $k=d$, where $d=\dim \mathbb{F}[\Delta] = \dim(\Delta) + 1$: when $\Delta$ satisfies $(S_d)$, we say that $\Delta$ is Cohen--Macaulay over $\F$.

    The Cohen--Macaulayness of $\Delta$ over $\mathbb{F}$ translates to the $R_{\Delta}$-resolution of $\mathbb{F}$ being linear not just until the $d$-th step, but at \emph{every} step. Standard graded algebras with this behaviour are called \emph{Koszul}. 
	\begin{thmx}[\Cref{cor: koszul_iff_cm}]\label{thm: intro Koszul iff CM}
	Let $\Delta$ be a pure flag simplicial complex.
	    The standard graded Gorenstein $\mathbb{F}$-algebra $R_{\Delta}$ is Koszul if and only if $\Delta$ is Cohen--Macaulay over $\mathbb{F}$.
	\end{thmx}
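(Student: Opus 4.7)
The plan is to derive \Cref{cor: koszul_iff_cm} as a direct consequence of \Cref{thm: s_k is k-linear}, combined with two standard facts: the characterisation of Koszulness in terms of linearity of the minimal resolution of the residue field, and the stabilisation of Serre's conditions at the Krull dimension.

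First I would recall that, by definition, a standard graded $\F$-algebra is Koszul precisely when $\F$ admits a linear minimal graded free resolution as a module over that algebra, i.e., $\beta^{R_\Delta}_{i,j}(\F) = 0$ for every pair $(i,j)$ with $j > i$. Equivalently, the minimal $R_\Delta$-resolution of $\F$ is linear for $k$ steps for every integer $k \geq 1$. Applying \Cref{thm: s_k is k-linear} for each such $k$ then shows that $R_\Delta$ is Koszul if and only if $\Delta$ satisfies Serre's condition $(S_k)$ over $\F$ for every $k \geq 1$.

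Next I would observe that $(S_k)$ stabilises once $k$ reaches $d := \dim \F[\Delta]$. Indeed, the depth of any localisation $\F[\Delta]_{\mathfrak{p}}$ is bounded above by $\dim \F[\Delta]_{\mathfrak{p}} \leq d$, so for $k \geq d$ the defining inequality $\mathrm{depth}\, \F[\Delta]_{\mathfrak{p}} \geq \min(k, \dim \F[\Delta]_{\mathfrak{p}})$ reduces to $\mathrm{depth}\, \F[\Delta]_{\mathfrak{p}} = \dim \F[\Delta]_{\mathfrak{p}}$ for every $\mathfrak{p}$. Thus ``$(S_k)$ for every $k \geq 1$'' is equivalent to $(S_d)$, which is the very definition of Cohen--Macaulayness of $\Delta$ over $\F$. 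Chaining the two equivalences yields the corollary.

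The substantive work has already been carried out in \Cref{thm: s_k is k-linear}, so I do not foresee an additional obstacle for this corollary. In particular, no separate argument is needed to pass from ``finitely many linear steps'' to ``linearity at every step'': invoking the preceding theorem with arbitrarily large $k$ directly delivers the full Koszul conclusion once $\Delta$ is known to be Cohen--Macaulay, and conversely Koszulness forces $(S_d)$ by specialising $k = d$.
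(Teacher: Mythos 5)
Your overall shape is right, and the direction ``Koszul $\Rightarrow$ Cohen--Macaulay'' is fine: Koszulness gives linearity of the $R_{\Delta}$-resolution of $\F$ for $d$ steps, and \Cref{thm: s_k is k-linear} with $k=d$ yields $(S_d)$, which is Cohen--Macaulayness. The problem is the converse. You assert that ``no separate argument is needed to pass from finitely many linear steps to linearity at every step'' because one may invoke \Cref{thm: s_k is k-linear} with arbitrarily large $k$. But the equivalence in that theorem is stated only for $1 \leq k \leq d$; for $k > d$ the claim ``linear for $k$ steps iff $(S_k)$'' is not available from it, and this passage --- from $d$-step linearity of the resolution of $\F$ to linearity at every step --- is exactly the substantive point. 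Your stabilisation observation is correct on the $(S_k)$ side ($(S_k)$ for $k \geq d$ coincides with $(S_d)$), but it does not automatically transfer to the resolution side: for a general quadratic algebra, by Roos \cite{Roos} the resolution of $\F$ can be linear for arbitrarily many steps without being linear, so ``linear for $d$ steps $\Rightarrow$ Koszul'' is a genuine theorem about $R_{\Delta}$, not a formality.

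The repair is immediate, because the needed step is built into the statement you are citing: the final sentence of \Cref{thm: s_k is k-linear} (``Moreover, if $\F$ has an $R_{\Delta}$-resolution which is linear for $d$ steps, then it has a linear $R_{\Delta}$-resolution'') is precisely the missing ingredient, and in the paper it is obtained from the full-strength Eagon--Reiner part of \Cref{thm:ERTY} together with \Cref{cor:same_reg} and \Cref{lem: from omega to F}, not by extending the $k$-step equivalence beyond $k=d$. With that clause the corollary reads: $\Delta$ Cohen--Macaulay $= (S_d)$ $\Rightarrow$ linear for $d$ steps $\Rightarrow$ linear resolution $\Rightarrow$ Koszul, and conversely as you argued. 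So cite the ``Moreover'' clause instead of claiming the first equivalence for $k>d$; as written, that step of your argument is not justified.
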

	
	We remark that this is not the first time the Koszul and Cohen--Macaulay properties have been brought together. Polo \cite{Polo} and Woodcock \cite{Woodcock} independently proved that the incidence algebra associated with a finite graded poset is a (noncommutative) Koszul algebra if and only if all the open intervals of the poset are Cohen--Macaulay, i.e.~the order complex of every open interval is Cohen--Macaulay. In a similar vein, Peeva, Reiner and Sturmfels showed in \cite{PRS} that a graded pointed affine semigroup algebra is Koszul if and only if every open interval of the poset associated with the semigroup is Cohen--Macaulay. These results have later been unified by Reiner and Stamate in \cite{ReSt}. Another intriguing connection between Koszulness and Cohen--Macaulayness has been highlighted by Vallette in the context of operads \cite{Vallette}. Moreover, the study of the Koszul property for quadratic Gorenstein algebras has been the object of intensive research in the last few years \cite{Matsuda, MSSI, MSSII, McSe}. In particular, the results in \cite{MSSI, MSSII, McSe} provide an almost fully complete characterization of the pairs $(c,r)$ for which there exists a non-Koszul quadratic Gorenstein algebra of codimension $c$ and Castelnuovo--Mumford regularity $r$: the only cases that have not been settled yet are $(c,r) = (6,3)$ and $(c,r) = (7,3)$. Unfortunately, the techniques developed in this paper do not give further information on these cases, see \Cref{rem: codim_and_reg}.
		
			We find a pleasing application of \Cref{thm: intro Koszul iff CM} in \Cref{prop: lens spaces}. Fix a finite set $P$ of prime numbers. By considering certain simplicial manifolds $\Delta$ whose vanishing in homology depends on the characteristic of the field, we obtain standard graded quadratic Gorenstein $\F$-algebras $R_{\Delta}$ that are Koszul if and only if the characteristic of $\F$ is not in $P$.
		
	\Cref{thm: intro Koszul iff CM} implies that checking Koszulness for the algebras $R_{\Delta}$ is equivalent to testing if $\Delta$ is Cohen--Macaulay. For instance, it suffices to compute the (finite) minimal free resolution of $\mathbb{F}[\Delta]$ to certify linearity of the (infinite) $R_{\Delta}$-resolution of $\mathbb{F}$. This is in stark contrast with the general behaviour of Koszul algebras. Indeed, working over a field $\F$ of characteristic zero, Roos \cite{Roos} constructed a family $(A_{\alpha})_{\alpha \geq 2}$ of standard graded Artinian quadratic $\F$-algebras with fixed Hilbert series and such that the $(A_{\alpha})$-resolution of $\F$ is linear for $\alpha$ steps, but fails to be so at the $(\alpha+1)$-st. Applying idealization to Roos' construction, McCullough and Seceleanu recently showed in \cite[Theorem 5.1]{McSe} that checking Koszulness is just as hard for quadratic Gorenstein algebras.
	
	Since certifying Koszulness for a quadratic algebra is difficult, algebraists often resort to studying stronger conditions like the existence of a quadratic Gr\"obner basis for the defining ideal.
	Our next result shows that having a quadratic Gr\"{o}bner basis for the algebras $R_{\Delta}$ corresponds indeed to a property of $\Delta$ which is much stronger than Cohen--Macaulayness.
	\begin{thmx}[\Cref{thm: shellable iff qgb}]\label{thm: shellable qgb intro}
	    Let $\Delta$ be a pure flag simplicial complex. The Gorenstein $\mathbb{F}$-algebra $R_{\Delta}$ has a quadratic Gr\"{o}bner basis if and only if $\Delta$ is a shellable complex.
	\end{thmx}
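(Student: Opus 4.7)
The plan is to mediate between ``$\Delta$ is shellable'' and ``$R_{\Delta}$ has a quadratic Gr\"{o}bner basis'' via the classical Herzog--Hibi (--Takayama) equivalence: a pure simplicial complex $\Delta$ is shellable if and only if the Alexander dual ideal $I_{\Delta^{\vee}}$ admits a linear-quotients ordering of its minimal generators. Because the Bier-type construction producing $\Gamma$ interlaces $\Delta$ with $\Delta^{\vee}$, the minimal generators of the canonical module $\omega_{\mathbb{F}[\Gamma]}$ are naturally indexed by those of $I_{\Delta^{\vee}}$, and the linear-quotients property on $I_{\Delta^{\vee}}$ translates into a ``shelling-compatible'' quadratic presentation of the Nagata idealization $R_{\Delta} = \mathbb{F}[\Gamma] \ltimes \omega_{\mathbb{F}[\Gamma]}$.

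For $\Delta$ shellable $\Rightarrow$ $R_{\Delta}$ has a quadratic Gr\"{o}bner basis: fix a shelling of $\Delta$, read off the induced linear-quotients ordering $m_1, \ldots, m_r$ of the minimal generators of $I_{\Delta^{\vee}}$, and transfer it through the Bier correspondence to an ordering of the module generators of $\omega_{\mathbb{F}[\Gamma]}$. Introduce new variables $z_1, \ldots, z_r$ indexed by this order and fix a term order on the ambient polynomial ring $\mathbb{F}[x_1, \ldots, x_n, y_1, \ldots, y_n, z_1, \ldots, z_r]$ that refines it (say, a lexicographic order with $z_1 > \cdots > z_r$ followed by the $x_i$'s and $y_i$'s). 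The defining ideal of $R_{\Delta}$ then has three families of quadratic generators: the flag generators of $I_{\Gamma}$; the quadratic relations encoding the linear presentation of $\omega_{\mathbb{F}[\Gamma]}$; and the socle relations $z_iz_j$. Verify that these form a Gr\"{o}bner basis by reducing all S-polynomials to zero, using the linear-quotients structure to handle S-pairs among module relations and absorbing the remaining S-pairs via the $I_{\Gamma}$ and $z_iz_j$ relations.

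For $R_{\Delta}$ having a quadratic Gr\"{o}bner basis $\Rightarrow$ $\Delta$ shellable: suppose the defining ideal of $R_{\Delta}$ has a quadratic Gr\"{o}bner basis under a term order $\prec$, with squarefree quadratic initial ideal $J$. Since each $z_iz_j$ is already a minimal generator of the defining ideal, every $z_iz_j$ lies in $J$. By \Cref{cor: koszul_iff_cm} we already know $\Delta$ is Cohen--Macaulay, so the task is to upgrade this to shellability. Partition the remaining minimal generators of $J$ according to which variable groups they involve: the mixed $xz$- and $yz$-generators encode a total order on the $z_i$'s under which the module generators $m_i$ of $\omega_{\mathbb{F}[\Gamma]}$ satisfy linear quotients. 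Transferring this order back across the Bier correspondence yields a linear-quotients order on the minimal generators of $I_{\Delta^{\vee}}$, and hence a shelling of $\Delta$ by Herzog--Hibi.

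The main obstacle is the S-pair verification in the forward direction: the reductions among the module relations require invoking the linear-quotients structure at each step, and S-pairs that mix a module relation with a generator of $I_{\Gamma}$ need extra care to guarantee reducibility using only other members of the prospective Gr\"{o}bner basis. The converse direction is more conceptual but still hinges on correctly identifying the combinatorial role of each class of monomial generators in $J$, and in particular on checking that the ordering extracted from the $xz$- and $yz$-generators is genuinely a linear-quotients order rather than merely a total order.
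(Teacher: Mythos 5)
Your overall architecture (order the facets by a shelling, pick a compatible term order, check S-pairs; conversely extract a facet order from a quadratic Gr\"obner basis) is close in spirit to the paper's, but both directions have genuine gaps at exactly the points where the real work lies. In the forward direction, ``reduce all S-polynomials to zero using the linear-quotients structure'' skips the crux. The problematic S-pairs are $S(b_{F,F_1},b_{F,F_2})$ for two quadratic binomials sharing the leading $z$-variable, where $F_1$ and $F_2$ meet in codimension two: this S-pair equals (a monomial times) $b_{F_2,F_1}$, which is \emph{not} itself in the prospective basis, and by the paper's \Cref{lem: reduce S} it reduces to zero modulo the quadratic binomials only if there is a chain of pairwise codimension-one-adjacent facets, all containing $F_1\cap F_2$ and all preceding $\max\{F_1,F_2\}$, arranged \emph{unimodally} (decreasing then increasing) with respect to the order. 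A single application of the linear-quotients/shelling condition gives one step of such a chain, not the whole unimodal path; the paper has to produce it by passing to the link of the codimension-two face inside the partial complex $\Delta_{k-1}$, using shellability of that link to get connectivity, and then choosing the colexicographically minimal path to force unimodality. Your sketch contains no substitute for this argument.

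The converse direction has a more serious gap: you implicitly assume that a quadratic Gr\"obner basis must consist of (or have leading terms coming from) the ``natural'' generators --- the flag quadrics, the $z_iz_j$'s, and the quadratic module relations --- so that its initial ideal has the mixed $xz$/$yz$-generators from which you propose to read off an order. A priori a quadratic Gr\"obner basis could be made of entirely different quadrics in the ideal. The paper closes this loophole with \Cref{prop:uni gb}: the listed monomials and binomials form a \emph{universal} Gr\"obner basis, so any reduced quadratic Gr\"obner basis is forced to sit inside the quadratic part $C_\Delta$ of that set. Without this (or an equivalent) input your extraction of a total order on the $z$-variables is unfounded; note also that the initial ideal is never squarefree, since $z_F^2$ lies in the defining ideal. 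Finally, even granting the basis is $C_\Delta$, the step ``the $xz$- and $yz$-generators encode a linear-quotients order'' is asserted rather than proved: the actual content, as in the paper, is to show that if the induced facet order is \emph{not} a shelling order then some S-pair of two binomials fails to reduce, and this uses Cohen--Macaulayness of $\Delta$ (via \Cref{cor: koszul_iff_cm}, since a quadratic Gr\"obner basis gives Koszulness) to guarantee connectivity in codimension one of the relevant link, together with \Cref{lem: reduce S} applied to a carefully chosen codimension-$\geq 3$ face $G$ and facets $F_r,F_s$. None of this is recoverable from the Herzog--Hibi dictionary alone as you have stated it.
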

	As the ideal defining $R_{\Delta}$ has a characteristic-free generating set consisting of monomials and binomials of the form $\mathbf{m} - \mathbf{m'}$ (\Cref{prop:RDelta_presentation}), the existence of a quadratic Gr\"{o}bner basis for such an object is a combinatorial property not depending on the choice of the field. The same is true for shellability of $\Delta$.
	
	As a last topic, we investigate a numerical invariant related to the Hilbert series of $R_{\Delta}$. Gorenstein algebras are known to have a symmetric $h$-polynomial with nonnegative coefficients. Gal \cite{Gal} proposed the study of a certain linear transformation of these coefficients, called the $\gamma$-vector, and conjectured the nonnegativity of its entries in the case of Stanley--Reisner rings of flag $\mathbb{F}$-homology spheres.
	The validity of one of the inequalities given by Gal's conjecture is known as Charney--Davis conjecture \cite{CD} and has important implications in metric geometry: see \cite{Fo} for an excellent survey on the topic.
	
	The Stanley--Reisner ring of a flag $\F$-homology sphere happens to be both Koszul and Gorenstein: using this observation as a starting point, Reiner and Welker began a more algebraic investigation of the Charney--Davis conjecture in \cite{ReWe}. An explicit generalization of the Charney--Davis conjecture in this direction can be found in a survey by Peeva and Stillman:
		\begin{question_intro}[{\cite[Problem 10.3]{PeSt}}] \label{qu:PeSt}
	    Let $S/I$ be a Koszul Gorenstein algebra with $h$-vector $(h_0, h_1, \ldots, h_{2e})$. Is it true that $(-1)^e(h_0 - h_1 + h_2 - \ldots + h_{2e}) \geq 0$?
	\end{question_intro}
	
	 The answer turns out to be negative. This is a consequence of our last result:
	
	\begin{thmx}[\Cref{prop: formula gamma}] \label{thm: intro gamma}
	    Let $\Delta$ be a pure $(d-1)$-dimensional simplicial complex. The vector $\gamma(R_{\Delta})$ is given by $\gamma_0(R_{\Delta}) = 1$, 
	    $\gamma_1(R_{\Delta}) = 2h_1(\Delta) + \sum_{k=2}^{d}h_k(\Delta)$, and
	    \[
	    \gamma_i(R_{\Delta})=(-1)^{i-1}\sum_{k=2i-1}^d \left(\binom{k-i}{i-1} + \binom{k-i-1}{i-2}\right)h_k(\Delta)
	    \]
	    for $2 \leq i \leq \lfloor\frac{d+1}{2}\rfloor$. In particular, if $h(\Delta)$ has nonnegative entries, then $(-1)^{i-1}\gamma_i(R_{\Delta})\geq 0$.
	\end{thmx}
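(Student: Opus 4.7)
The strategy is first to determine the $h$-polynomial of $R_\Delta$ in terms of $h(\Delta)$, and then to decompose the resulting palindromic polynomial in the $\gamma$-basis. Using the standard expression $\Hilb(\F[\Gamma], t) = \sum_{\sigma \in \Gamma} (t/(1-t))^{|\sigma|}$ together with the fact that every face of $\Gamma$ decomposes uniquely as $F \cup S$ with $F \in \Delta$ (viewed in the $x$-variables) and $S \subseteq \{y_i : i \notin F\}$, summing first over $S$ for fixed $F$ collapses the series into
\[
\Hilb(\F[\Gamma], t) = \frac{\sum_{F \in \Delta} t^{|F|}}{(1-t)^n},
\]
so $\dim \F[\Gamma] = n$ and $h_\Gamma(t) = \sum_{k=0}^d h_k(\Delta) t^k (1+t)^{d-k}$ after rewriting the $f$-polynomial of $\Delta$ in terms of its $h$-polynomial.

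Because $\Gamma$ is a PL ball (hence Cohen--Macaulay), the canonical module has Hilbert series $(-1)^n \Hilb(\F[\Gamma], 1/t)$, and feeding this into the Nagata idealization identity with the shift chosen so that $R_\Delta$ is standard graded gives
\[
h_{R_\Delta}(t) = h_\Gamma(t) + t \cdot h_\Gamma^*(t),
\]
where $h_\Gamma^*(t) := t^d h_\Gamma(1/t) = \sum_k h_k(\Delta)(1+t)^{d-k}$ by a direct manipulation. Hence
\[
h_{R_\Delta}(t) = \sum_{k=0}^d h_k(\Delta)(1+t)^{d-k}(t^k + t),
\]
which is palindromic of degree $d+1$ and admits a unique $\gamma$-expansion in the basis $\{t^i(1+t)^{d+1-2i}\}_i$.

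For the actual $\gamma$-extraction, the crucial move is to set $u = t/(1+t)$ and $s = u(1-u) = t/(1+t)^2$; dividing $(1+t)^{d-k}(t^k+t)$ by $(1+t)^{d+1}$ then yields the clean factorization
\[
\frac{t^k + t}{(1+t)^{k+1}} = u(1-u)^k + u^k(1-u) = s\bigl(u^{k-1} + (1-u)^{k-1}\bigr).
\]
Since $u + (1-u) = 1$ and $u(1-u) = s$, Waring's formula for power sums expresses $u^{k-1} + (1-u)^{k-1}$ as $\sum_{i \geq 0} (-1)^i \frac{k-1}{k-1-i}\binom{k-1-i}{i} s^i$ when $k \geq 2$, and equals $2$ when $k = 1$. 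Reindexing with $j := i+1$ and matching coefficients shows that $h_k(\Delta)$ contributes $(-1)^{j-1}\frac{k-1}{k-j}\binom{k-j}{j-1} h_k(\Delta)$ to $\gamma_j(R_\Delta)$ for $j \geq 1$ and $k \geq 2j-1$; the special case $k = 1$ produces the anomalous $2h_1(\Delta)$ summand in $\gamma_1$, and $k = 0$ gives $\gamma_0 = 1$.

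Finally I would verify the one-line identity $\frac{k-1}{k-i}\binom{k-i}{i-1} = \binom{k-i}{i-1} + \binom{k-i-1}{i-2}$, which follows from $\binom{a-1}{b-1} = (b/a)\binom{a}{b}$, to rewrite the Waring coefficients in the form of the statement. The sign-alternation claim is then immediate, since both binomial coefficients in the formula are nonnegative. The main obstacle is the $\gamma$-basis expansion in the previous paragraph: identifying $(t^k+t)/(1+t)^{k+1}$ as a power sum in $u$ and $1-u$ expressed in the variable $s$ is the key insight, after which matching the Waring coefficients to the Pascal form in the statement is routine.
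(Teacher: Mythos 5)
Your proposal is correct, and it reaches the formula by a genuinely different route than the paper. The paper first records $h_i(R_{\Delta}) = f_{i-1}(\Delta) + f_{d-i}(\Delta)$ and then proves the formula for $\gamma_i(R_{\Delta})$ by induction on $i$, feeding the recursion $\gamma_i = h_i(R_\Delta) - \sum_{j<i}\binom{d+1-2j}{i-j}\gamma_j$ into a binomial-coefficient reformulation of the Girard--Waring identity (its \Cref{lem: carta di identities}.ii) and checking that all coefficients $[\gamma_i]_k$ with $k \le 2i-2$ cancel. You instead work at the level of generating functions: after deriving $\Hilb(\F[\Gamma],t) = \bigl(\sum_{F\in\Delta}t^{|F|}\bigr)/(1-t)^n$ (which re-proves $h(\Gamma)=f(\Delta)$ directly, rather than via the shelling argument of \Cref{prop: Bier dual}) and assembling $h_{R_\Delta}(t) = \sum_k h_k(\Delta)(1+t)^{d-k}(t^k+t)$ from the idealization, you extract the $\gamma$-vector all at once by dividing by $(1+t)^{d+1}$ and reading off coefficients in $s = t/(1+t)^2$, where the factorization $\tfrac{t^k+t}{(1+t)^{k+1}} = s\bigl(u^{k-1}+(1-u)^{k-1}\bigr)$ with $u = t/(1+t)$, $u+(1-u)=1$, $u(1-u)=s$ reduces everything to a single application of the Girard--Waring power-sum formula (the same classical identity the paper invokes in \Cref{lem: carta di identities}.i, via [Gould]). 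I checked the individual steps: the face decomposition of $\Gamma$, the Hilbert-series bookkeeping for $\omega_{\F[\Gamma]}(n-d-1)$ giving $h_{R_\Delta} = h_\Gamma + t\,h_\Gamma^*$, the degenerate cases $k=0,1$ producing $\gamma_0 = 1$ and the $2h_1(\Delta)$ term, the range $k \ge 2j-1$, and the identity $\tfrac{k-1}{k-i}\binom{k-i}{i-1} = \binom{k-i}{i-1}+\binom{k-i-1}{i-2}$ matching the paper's $\ell_{k-1,i-1}$; all are correct. What your approach buys is the elimination of the induction and of the coefficient-cancellation bookkeeping, at the price of setting up the $\gamma$-extraction via the substitution $s = t/(1+t)^2$; the paper's inductive route stays closer to the recursive definition \eqref{eq: recursive} and only needs the $h$-vector of $R_\Delta$ as input, which it already has from \Cref{rem:hvector_RDelta}.
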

	As the $h$-vector of a Cohen--Macaulay complex has nonnegative entries, \Cref{thm: intro gamma} and \Cref{thm: intro Koszul iff CM} show that whenever the algebra $R_{\Delta}$ is Koszul, its $\gamma$-vector has entries which alternate in sign. In particular, when $\Delta$ is a $(d-1)$-dimensional Cohen--Macaulay flag complex with $d\equiv 3 \text{ mod }4$, the Charney--Davis quantity $\gamma_{\lfloor \frac{d+1}{2}\rfloor}(R_{\Delta})$ is nonpositive. For an explicit example where $\gamma_{\lfloor \frac{d+1}{2}\rfloor}(R_{\Delta}) < 0$, see \Cref{ex: algebraic CD counterexample}.\\
	Finally, in \Cref{sec: GZ} we highlight a connection to the literature, showing that a Gorenstein algebra defined via apolarity by Gondim and Zappal\`{a} \cite{GZ} is an Artinian reduction of $R_{\Delta}$ in characteristic zero. As a consequence, \Cref{thm: intro Koszul iff CM} applies to the algebras in \cite{GZ} as well. Moreover, we show that \Cref{thm: shellable qgb intro} also goes through the Artinian reduction, and thus provides a criterion to decide if the algebras studied by Gondim and Zappal\`a have a quadratic Gr\"{o}bner basis. Finally, in \Cref{rem: quadratic ADelta} we fix a previous characterization \cite[Theorem 3.5]{GZ} of quadraticity for such algebras.
	
	\TOCstop

	\TOCstart
	
	\section{Preliminaries}
	\subsection{Graded algebras and modules}
	Let us fix a field $\F$ and consider a standard graded $\F$-algebra $R=\F[x_1,\dots,x_n]/I$, for some homogeneous ideal $I$. The \emph{Hilbert series} of $R$ is a rational function of the form \[\Hilb(R,t)=\frac{\sum_{i=0}^s h_i t^i}{(1-t)^d},\]
    with $\sum_i h_i\neq 0$. The numerator of $\Hilb(R,t)$ expressed as above is the \emph{$h$-polynomial} of $R$, and the integer sequence $h(R)=(h_0,\dots,h_s)$ is the \emph{$h$-vector} of $R$. In general, the entries of the $h$-vector can be either positive or negative.
    
    If $M$ is a finitely generated graded module over $R$, we can consider its \emph{minimal graded free resolution}. This is the unique (up to isomorphism of chain complexes) complex of free $R$-modules and degree zero maps
    \begin{equation}\label{eq: resolution}
         \cdots\overset{\partial_{i+1}}{\to} \bigoplus_j R(-j)^{\beta_{i,j}}\overset{\partial_{i}}{\to} \cdots \overset{\partial_{2}}{\to} \bigoplus_j R(-j)^{\beta_{1,j}} \overset{\partial_{1}}{\to} \bigoplus_j R(-j)^{\beta_{0,j}}\to 0
    \end{equation}    
    which is exact in all positions but the zeroth, where $\cok(\partial_1)\cong M$, and is such that every $\partial_i$ can be represented as a matrix of zeros and homogeneous polynomials of positive degree. The numbers $\beta_{i,j}$ in \eqref{eq: resolution} are known as the \emph{graded Betti numbers} of $M$. In a more functorial fashion, they can be written as $\beta_{i,j}=\beta^R_{i,j}(M)=\dim_{\F}\Tor_i^R(M,\F)_j$. This highlights a second way to compute graded Betti numbers: namely, thanks to the commutativity of $\Tor$, one can tensor a minimal resolution of the $R$-module $\F$ with $M$, and then take the homology of the resulting chain complex. We will use this standard observation in the next sections. The quantity $\sup\{j - i : \beta_{i,j}^R(M) \neq 0\}$ is known as the \emph{Castelnuovo--Mumford regularity} of $M$, denoted by $\mathrm{reg}_R(M)$.
    
    The standard tool to record the information about graded Betti numbers is the \emph{Poincar\'{e} series} of $M$ over $R$, defined as
\begin{equation} \label{eq: def_poincare}
    P_R^M(s,t)=\sum_{i,j}\beta^R_{i,j}(M)s^j t^i.
\end{equation}
We will omit the superscript when $M$ is the residue field $\F = R/(x_1, \ldots, x_n)$.

    If $M$ is generated in a single degree, we say that the minimal resolution of $M$ is \emph{linear for $0$ steps}; if moreover all the nonzero entries of the matrices $\partial_i$ in \eqref{eq: resolution} are linear forms for every $1 \leq i \leq k$, we say that the minimal resolution of $M$ is \emph{linear for $k$ steps}. If the nonzero entries of \emph{all} matrices in the resolution are linear forms, we say that $M$ has a \emph{linear resolution}. 
    
    By a celebrated theorem of Hilbert, minimal resolutions of modules over the polynomial ring $S=\F[x_1, \ldots, x_n]$ are finite: the length of the minimal free resolution of $M$ as an $S$-module is known as the \emph{projective dimension} of $M$ and denoted by $\pd_S(M)$. When $M = R$, it is known that $n-d \leq \pd_S(R) \leq n$, where $d$ is the Krull dimension of $R$. Algebras attaining the above lower bound are well studied:

    \begin{defi}
        A $d$-dimensional standard graded $\F$-algebra $R=S/I$ is \emph{Cohen--Macaulay} over $\F$ if $\pd_S(R)=n-d$.
    \end{defi}
    Cohen--Macaulay algebras play an important role in commutative algebra, algebraic geometry and combinatorics \cite{BH,St_green}, and so does the special subclass consisting of \emph{Gorenstein} algebras. 
    \begin{defi}
        A $d$-dimensional standard graded $\F$-algebra $R=S/I$ is \emph{Gorenstein} over $\F$ if it is Cohen--Macaulay and $\dim_{\F} \Tor^S_{n-d}(R,\F)=1$.
    \end{defi}
    We conclude this section by introducing an object that will play a crucial role in the rest of the paper.
    
    \begin{defi} \label{def: canonical}
        Let $S=\F[x_1, \ldots, x_n]$ be a standard graded polynomial ring and let $R=S/I$ be a $d$-dimensional standard graded Cohen--Macaulay $\F$-algebra. Then:
        \begin{itemize}
            \item the \emph{canonical module} of $S$ is the $\mathbb{Z}^n$-graded module $\omega_S := S(-1, -1, \ldots, -1)$ (hence, if we are only interested in the $\mathbb{Z}$-graded structure, $\omega_S \cong S(-n)$);
            \item the canonical module of $R$ is the $\mathbb{Z}$-graded module $\omega_R := \Ext^{n-d}_S(R, \omega_S)$. If $I$ happens to be $\mathbb{Z}^n$-graded, then so is $\omega_R$; 
            \item the \emph{$a$-invariant} of $R$ is $a(R) := -\min\{j \in \mathbb{Z} : (\omega_R)_j\neq 0\}$;
            \item the algebra $R$ is \emph{level} if $\omega_R$ is generated in a single $\mathbb{Z}$-degree.
        \end{itemize}
    \end{defi}
    
	The minimal free resolution of $\omega_R$ as an $S$-module can be obtained dualizing a minimal free resolution of $R$ via the contravariant functor $\Hom_S(-,\omega_S)$.
    
	\subsection{Simplicial complexes}
	Let $\Delta$ be a simplicial complex on $[n]=\{1,\dots,n\}$, i.e., a collection of subsets of $[n]$ closed under inclusion. Elements of $\Delta$ are called \emph{faces} and inclusion-maximal faces are called \emph{facets}. We denote by $\mathcal{F}(\Delta)$ the set all facets of $\Delta$. The dimension af a face equals its cardinality minus one, and the dimension of $\Delta$ is the maximal dimension of one of its faces; if all facets have the same dimension, we say that $\Delta$ is \emph{pure}. We record the number of faces in each dimension in a vector $f(\Delta)$ called the \emph{$f$-vector} of $\Delta$. If $\Delta$ is $(d-1)$-dimensional, then $f(\Delta)=(f_{-1},f_0,\dots,f_{d-1})$, where $f_{-1}=1$, unless $\Delta=\emptyset$.
	
	If $F_1, \ldots, F_r$ are subsets of $[n]$, we will denote by $\langle F_1, \ldots, F_r \rangle$ the simplicial complex generated by $F_1, \ldots, F_r$, i.e.~the smallest simplicial complex on $[n]$ containing $F_1, \ldots, F_r$.
	
	Fix now a field $\F$. We denote by $I_{\Delta}\subseteq \F[x_1,\dots,x_n]$ the squarefree monomial ideal generated by the monomials supported on the complement of $\Delta$ in $2^{[n]}$, namely 
	\[
	    I_{\Delta}:=\left(\mathbf{x}^F ~ : ~ F\notin \Delta\right),
	\]
	where $\mathbf{x}^F=\prod_{i\in F}x_i$.
	
	The ideal $I_{\Delta}$ is known as the \emph{Stanley--Reisner ideal} of $\Delta$, and the graded $\F$-algebra $\F[\Delta]=\F[\mathbf{x}]/I_{\Delta}$ is the associated \emph{Stanley--Reisner ring}. There is a rich interplay between combinatorial properties of $\Delta$ and algebraic features of $\F[\Delta]$: for instance, the Krull dimension of $\F[\Delta]$ is the dimension of $\Delta$ plus one. We can actually do better than this, as we can read off the whole $f$-vector of $\Delta$ from the Hilbert series of $\F[\Delta]$ and vice versa: the $h$-vector $h(\Delta)=h(\F[\Delta])=(h_0,\dots,h_d)$ is an invertible integer linear transformation of $f(\Delta)$. More precisely: 
	\begin{equation}\label{eq: h from f}
	    h_i=\sum_{j=0}^{i}(-1)^{i-j}\binom{d-j}{d-i}f_{j-1}
	    \end{equation}
	\begin{equation} \label{eq: f from h}
    f_{i-1}=\sum_{j=0}^{i}\binom{d-j}{d-i}h_j.
	\end{equation}

	Local properties of a simplicial complex around a face $F\in\Delta$ are described by the \emph{link} of $F$, defined as
	\[
	    \lk_{\Delta}(F)~:=~\{G\in \Delta ~:~ F\cap G=\emptyset, F\cup G \in\Delta\}.
	\]
	
	If $\Delta$ is a pure $(d-1)$-dimensional complex, then $\lk_{\Delta}(F)$ is a (pure) $(d-|F|-1)$-dimensional simplicial complex. As a slogan, the simpler the homology of the links of $\Delta$ is, the nicer $\Delta$ is. To make this precise, we give the following definition after Murai and Terai \cite{MT} (even though it was already implicit in work by Terai and Yanagawa \cite{Y}):
	
	\begin{defi}\label{def: Serre} Let $r \geq 1$. A simplicial complex $\Delta$ satisfies the (combinatorial) \emph{Serre condition} $(S_r)$ with respect to a field $\F$ if
	\[
	    \widetilde{H}_{i}(\lk_{\Delta}(F);\F) = 0
	\]
	for every $F\in\Delta$ and for every $i<\min\{r-1,\dim(\lk_{\Delta}(F))\}$.
	A $(d-1)$-dimensional simplicial complex satisfying property $(S_{d})$ over $\F$ is called \emph{Cohen--Macaulay} over $\F$.  
	\end{defi}
	
	\Cref{def: Serre} deserves some comments. It is clear that every complex $\Delta$ must satisfy $(S_1)$ and that, if $\Delta$ has $(S_r)$, then it has $(S_i)$ for every $1\leq i\leq r$. Moreover, $\Delta$ satisfies property $(S_2)$ if and only if it is pure and the link of any face of codimension at least two is connected. In particular, property $(S_2)$ does not depend on the field. For $r \geq 2$, condition $(S_r)$ states that the link of any face of codimension at most $r$ is allowed to have nonvanishing homology only in top homological degree, while lower-dimensional faces need to have vanishing homology up to homological degree $r-2$.
	The combinatorial Serre condition $(S_r)$ for $\Delta$ (with respect to $\F$) is equivalent to the usual algebraic Serre condition $(S_r)$ for the Stanley--Reisner ring $\F[\Delta]$. In particular, a $(d-1)$-dimensional complex $\Delta$ is $(S_d)$ if and only if $\F[\Delta]$ satisfies the algebraic $(S_d)$ condition, which in turn means that $\F[\Delta]$ is a Cohen--Macaulay ring. This fact is known as \emph{Reisner's criterion} \cite{Rei}. The equivalence between the algebraic and combinatorial Serre conditions for simplicial complexes is known and proved for instance in \cite[end of Section 1]{Terai}, but we will sketch a proof here for the interested reader.
	
	\begin{prop}
	    Let $r \geq 1$ and let $\Delta$ be a simplicial complex on $[n]$. Then $\Delta$ satisfies the combinatorial $(S_r)$ condition (with respect to $\F$) if and only if the Stanley--Reisner ring $\F[\Delta]$ satisfies the usual algebraic $(S_r)$ condition, i.e.
	    \begin{equation} \label{eq: algebraic_serre}
	        \mathrm{depth}\!\ \F[\Delta]_{\mathfrak{p}} \geq \min\{r, \ \dim \F[\Delta]_{\mathfrak{p}}\} \hspace{30pt} \textrm{for every }\mathfrak{p}\in \mathrm{Spec}(\F[\Delta]).
	   \end{equation}
	\end{prop}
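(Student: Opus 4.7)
The plan is to reduce both conditions to the same face-by-face vanishing statement on the simplicial homology of links of $\Delta$, by passing through graded primes, a link-theoretic description of localizations, and a depth formula for Stanley--Reisner rings.

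First, since $\F[\Delta]$ is standard graded, the algebraic $(S_r)$ condition need only be verified at graded primes, which are exactly the ideals $P_F := (x_i : i \notin F)$ for $F \in \Delta$. For $r = 1$ both conditions are vacuous, so we may assume $r \geq 2$; in that range, both conditions force $\Delta$ to be pure (the algebraic side via equidimensionality, the combinatorial side via connectivity of links of faces of codimension $\geq 2$), and the remainder of the argument can proceed under this assumption.

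The next step is a localization computation. Using the decomposition of the star of $F$ in $\Delta$ as the simplicial join $F * \lk_\Delta(F)$, one shows that $\F[\Delta]_{P_F}$ is a Laurent polynomial extension (in the variables indexed by $F$) of $\F[\lk_\Delta(F)]$ localized at its graded maximal ideal: once the $x_i$ with $i \in F$ are inverted, the variables indexed by vertices outside the closed star of $F$ are eliminated by the monomial relations in $I_\Delta$. Faithful flatness of this extension gives $\dim \F[\Delta]_{P_F} = \dim \lk_\Delta(F) + 1$ and $\mathrm{depth}\,\F[\Delta]_{P_F} = \mathrm{depth}\,\F[\lk_\Delta(F)]$; algebraic $(S_r)$ at $P_F$ thus becomes the depth inequality $\mathrm{depth}\,\F[\lk_\Delta(F)] \geq \min\{r, \dim \lk_\Delta(F) + 1\}$.

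The last step matches this depth inequality with \Cref{def: Serre}. I would invoke the graded form of Hochster's formula for the local cohomology of the Stanley--Reisner ring $\F[\lk_\Delta(F)]$ to convert the depth inequality into a statement of the form $\widetilde{H}^j(\lk_\Delta(F \cup \sigma); \F) = 0$ for every $\sigma \in \lk_\Delta(F)$ and every $j$ in an explicit range depending on $|\sigma|$ and $\dim \lk_\Delta(F)$. Setting $\sigma = \emptyset$ and varying $F$ recovers verbatim the combinatorial $(S_r)$ condition, yielding one implication. For the converse, using the pure-case identity $\dim \lk_\Delta(G) = \dim \lk_\Delta(F) - |G \setminus F|$ whenever $F \subseteq G$, a short index manipulation shows that the algebraic condition at $P_F$ is no more restrictive than the combinatorial $(S_r)$ condition at $G = F \cup \sigma$. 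I expect the main technical point to be this index bookkeeping, which is routine but requires care.
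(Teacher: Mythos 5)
Your strategy is sound and genuinely different from the paper's. You localize: reduce to the monomial primes $P_F$, identify $\F[\Delta]_{P_F}$ (after inverting the $x_i$ with $i\in F$, which kills the variables outside the closed star) with a localization of $\F[\lk_{\Delta}(F)]\otimes_{\F}\F[x_i^{\pm1}:i\in F]$, deduce $\dim \F[\Delta]_{P_F}=\dim\lk_{\Delta}(F)+1$ and $\mathrm{depth}\,\F[\Delta]_{P_F}=\mathrm{depth}\,\F[\lk_{\Delta}(F)]$, and then translate the depth inequality via the Hochster/Reisner depth criterion into vanishing of $\widetilde{H}^{j}(\lk_{\Delta}(F\cup\sigma);\F)$ for $j<\min\{r,\dim\lk_{\Delta}(F)+1\}-|\sigma|-1$. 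The bookkeeping does close: $\sigma=\emptyset$ gives exactly the condition of \Cref{def: Serre} at $F$, and conversely, purity gives $\dim\lk_{\Delta}(F\cup\sigma)=\dim\lk_{\Delta}(F)-|\sigma|$, so the required range is contained in the one covered by the combinatorial condition at $F\cup\sigma$. The paper instead never leaves the graded setting: it invokes the characterization of $(S_r)$ for equidimensional quotients as $\dim\Ext^{n-j}_S(\F[\Delta],\omega_S)\leq j-r$ for $j<\dim\F[\Delta]$ \cite{DMV}, uses squarefreeness of these Ext modules \cite{Y} to reduce to multidegrees in $\{0,1\}^n$, and concludes with $\Ext^{n-j}_S(\F[\Delta],\omega_S)_F\cong H_{j-|F|-1}(\lk_{\Delta}F;\F)$. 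Your route is the classical, more hands-on one; the paper's buys brevity and, importantly, avoids any discussion of non-graded primes.

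That last point is where your write-up needs repair. It is not true that ``since $\F[\Delta]$ is standard graded, the algebraic $(S_r)$ condition need only be verified at graded primes, which are exactly the ideals $P_F$'': the $\mathbb{Z}$-graded primes of $\F[\Delta]$ are far from being only the monomial ones (e.g.\ the image of $(x_1-x_2,x_3,\ldots,x_n)$ is homogeneous and not of the form $P_F$). What you actually need is the finer $\mathbb{Z}^n$-grading, under which the homogeneous primes of $\F[\Delta]$ are precisely the $P_F$ with $F\in\Delta$, combined with a Matijevic--Roberts/Goto--Watanabe type reduction: for an arbitrary prime $\mathfrak{p}$ with largest monomial subprime $\mathfrak{p}^{*}$, both $\mathrm{depth}$ and $\dim$ of $\F[\Delta]_{\mathfrak{p}}$ exceed those of $\F[\Delta]_{\mathfrak{p}^{*}}$ by the same amount, so $(S_r)$ at all monomial primes implies $(S_r)$ at all primes. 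This is standard and citable, but it is a theorem, not a consequence of standard gradedness, and as the step that makes ``for every $\mathfrak{p}$'' checkable it must be stated correctly. (Also, for $r=1$ the algebraic condition is not vacuous but automatic: it amounts to the absence of embedded primes, which holds because $\F[\Delta]$ is reduced.) With those two points fixed, your argument goes through.
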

	\begin{proof}
	    For a Noetherian ring $R$, the algebraic $(S_1)$ condition is known to be equivalent to the absence of embedded primes in $R$, which is always the case for the reduced ring $\F[\Delta]$. On the other hand, the combinatorial $(S_1)$ condition is met by every simplicial complex.
	    
	    Let now $r \geq 2$, and denote by $S$ the polynomial ring $\F[x_1, \ldots, x_n]$. Note that both the combinatorial and the algebraic $(S_2)$ conditions imply that $\F[\Delta]$ is equidimensional, i.e. $\Delta$ is pure: for the algebraic statement, see \cite[Remark 2.4.1]{Hart}. By \cite[Proposition 2.11]{DMV}, \eqref{eq: algebraic_serre} is then equivalent to
	    \begin{equation*}
	        \dim\Ext^{n-j}_S(\F[\Delta], \omega_S) \leq j-r \hspace{30pt} \text{for every } j<\dim \F[\Delta]
	    \end{equation*}
	    which, since $\Ext^{n-j}_S(\F[\Delta], \omega_S)$ is a squarefree module \cite{Y}, is in turn equivalent to
	    \begin{equation*} \label{eq: squarefree_ext}
	        \Ext^{n-j}_S(\F[\Delta], \omega_S)_F = 0 \hspace{20pt} \text{for every } j<\dim(\Delta) + 1 \text{ and } F \in \{0,1\}^n \text{ with } |F| > j-r.
	    \end{equation*}
	    It is a useful fact of Stanley--Reisner theory that $\Ext^{n-j}_S(\F[\Delta], \omega_S)_F \cong H_{j - |F| - 1}(\lk_{\Delta}F, \F)$: see for instance \cite[Proposition 3.1]{Y}. To conclude it is then enough to set $i = j - |F| - 1$ and note that, since $\Delta$ is pure, $\dim \lk_{\Delta}F = \dim{\Delta}-|F|$.
	\end{proof}
	
	The \emph{Alexander dual} of ${\Delta}$ is the simplicial complex $\Delta^*$ on the same vertex set as $\Delta$ and with Stanley--Reisner ideal \[I_{\Delta^*}= (\mathbf{x}^{[n]\setminus G} : G\in\Delta),
	\]
	where $\mathbf{x}^{[n]\setminus G} = \prod_{i \in [n] \setminus G}x_i$.
	A minimal generating set for $I_{\Delta^*}$ is hence given by monomials supported on the complements of facets of $\Delta$ in $[n]$.
	
	Eagon and Reiner \cite{ER} related the Cohen--Macaulay property of $\Delta$ to the Castelnuovo--Mumford regularity of the Alexander dual ideal $I_{\Delta^*}$. This result was extended to $(S_r)$ conditions by Terai and Yanagawa \cite{Y}.
	
	\begin{theorem} \label{thm:ERTY}
	    \cite[Theorem 3]{ER}, \cite[Corollary 3.7]{Y}
	    Let $\Delta$ be a $(d-1)$-dimensional simplicial complex on $[n]$. For every $2\leq r\leq d$, the complex $\Delta$ satisfies property $(S_r)$ if and only if $I_{\Delta^*}$ has a resolution as an $\F[x_1,\dots,x_n]$-module which is linear for $r-1$ steps. In particular, $\Delta$ is Cohen--Macaulay if and only if $I_{\Delta^*}$ has a linear $\F[x_1, \ldots, x_n]$-resolution.
	\end{theorem}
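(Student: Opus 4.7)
The plan is to translate linearity of the resolution of $I_{\Delta^*}$ into a vanishing condition for reduced simplicial homology of links of $\Delta$, via Hochster's formula together with combinatorial Alexander duality.

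Since $\Delta$ is pure of dimension $d-1$, the minimal generators of $I_{\Delta^*}$ all have degree $n-d$, so the condition ``$I_{\Delta^*}$ has a resolution linear for $r-1$ steps'' translates, after shifting from $I_{\Delta^*}$ to $\F[\Delta^*] := S/I_{\Delta^*}$, to the vanishing of $\beta^S_{j,\sigma}(\F[\Delta^*])$ for every squarefree multidegree $\sigma \subseteq [n]$ with $|\sigma| > n-d+j-1$ and every $1 \leq j \leq r$. By Hochster's formula,
\[
\beta^S_{j,\sigma}(\F[\Delta^*]) \;=\; \dim_{\F} \tilde{H}_{|\sigma|-j-1}((\Delta^*)_{\sigma}; \F),
\]
where $(\Delta^*)_{\sigma}$ denotes the induced subcomplex on $\sigma$. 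A direct unwinding of the definitions (in the same spirit as the Ext calculation in the proof of the previous proposition) shows that $(\Delta^*)_{\sigma}$ is exactly the Alexander dual, inside $\sigma$, of $\lk_{\Delta}(\sigma^{c})$. Combinatorial Alexander duality performed inside $\sigma$ then yields
\[
\beta^S_{j,\sigma}(\F[\Delta^*]) \;=\; \dim_{\F} \tilde{H}_{j-2}(\lk_{\Delta}(\sigma^{c}); \F).
\]
Whenever $\sigma^{c} \notin \Delta$ both sides vanish, because $(\Delta^*)_{\sigma}$ is then the full simplex on $\sigma$.

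Finally, setting $F := \sigma^{c} \in \Delta$ and $m := j-2$, the bound $|\sigma| > n-d+j-1$ rewrites as $\dim \lk_{\Delta}(F) > m$, while the range $1 \leq j \leq r$ becomes $-1 \leq m \leq r-2$. Thus $I_{\Delta^*}$ has a resolution linear for $r-1$ steps if and only if $\tilde{H}_{m}(\lk_{\Delta}(F); \F) = 0$ for every $F \in \Delta$ and every $m < \min\{r-1,\dim \lk_{\Delta}(F)\}$, which is precisely Serre's condition $(S_{r})$. Taking $r = d$ immediately gives the Cohen--Macaulay statement of Eagon and Reiner.

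The main obstacle is the careful index bookkeeping, since the degree shift between $I_{\Delta^*}$ and $\F[\Delta^*]$, the homological shift in Hochster's formula, and the index involution in combinatorial Alexander duality must all be composed correctly; one must also verify that boundary cases (such as $\sigma^{c} \notin \Delta$ or $F$ a facet of $\Delta$) do not spoil the equivalence.
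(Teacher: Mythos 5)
The paper does not actually prove \Cref{thm:ERTY}: it is quoted from Eagon--Reiner \cite{ER} and Terai--Yanagawa \cite{Y}, so there is no internal proof to compare against. Your argument is a correct, self-contained derivation along the classical combinatorial route: translating ``linear for $r-1$ steps'' into the vanishing of $\beta^S_{j,\sigma}(S/I_{\Delta^*})$ above the linear strand for $1\leq j\leq r$, applying Hochster's formula, identifying $(\Delta^*)_{\sigma}$ with the Alexander dual of $\lk_{\Delta}(\sigma^{c})$ inside $\sigma$ (this unwinding is exactly right), and then using combinatorial Alexander duality to land on $\widetilde{H}_{j-2}(\lk_{\Delta}(\sigma^{c});\F)$; the bookkeeping ($|\sigma|>n-d+j-1\Leftrightarrow j-2<\dim\lk_{\Delta}(\sigma^{c})$, and $1\leq j\leq r\Leftrightarrow -1\leq j-2\leq r-2$) checks out, as does the observation that multidegrees with $\sigma^{c}\notin\Delta$ contribute nothing. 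This is essentially the standard proof of the Eagon--Reiner theorem, upgraded by noting that the same formula controls each linear step separately; the cited Terai--Yanagawa proof instead goes through squarefree modules and local duality (the machinery echoed in the proposition immediately preceding the theorem, which compares the combinatorial and algebraic $(S_r)$ conditions), so your route is more elementary and has the advantage of working directly with the combinatorial $(S_r)$ condition of \Cref{def: Serre}.

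One small point to patch: purity of $\Delta$ is not a hypothesis of the statement, yet you assume it at the outset and use it twice (all generators of $I_{\Delta^*}$ in degree $n-d$, and $\dim\lk_{\Delta}(F)=d-|F|-1$). This is harmless but should be addressed: either observe that both sides of the equivalence force purity (combinatorial $(S_2)$ implies purity, as noted after \Cref{def: Serre}, while linearity for at least one step forces $I_{\Delta^*}$ to be generated in the single degree $n-d$, i.e.\ all facets of $\Delta$ to have cardinality $d$), or note that the $j=1$ instance of your Betti vanishing condition is itself equivalent to purity, since every minimal generator of $I_{\Delta^*}$ has degree at least $n-d$.
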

	
	\subsection{Nagata idealization}
	
	A theme of this paper will be the construction of Gorenstein algebras from ``good enough'' objects. The required technical tool is a well-known operation called \emph{idealization}, made popular by Nagata. This subsection follows closely \cite[Section 3]{MSSI}, and we refer the reader to that for more information.
	
	\begin{defi}
		The \emph{idealization} of an $A$-module $M$, denoted $A \ltimes M$, is the $A$-algebra with underlying module $A \oplus M$ and multiplication defined by $(a_1,m_1)\cdot (a_2,m_2)=(a_1a_2, a_1m_2+a_2m_1)$.
	\end{defi}
	This operation is called idealization because it turns every submodule $N$ of $M$ into an ideal $\{0\}\ltimes N$ of $A\ltimes M$. For the rest of the paper, the ring $A$ will be a standard graded $\F$-algebra. If $M$ is graded, then so is $A \ltimes M$, by setting $(A \ltimes M)_j = A_j \oplus M_j$; moreover, $A \ltimes M$ is standard graded if and only if $M$ is generated in degree one \cite[Remark 3.1]{MSSI}. Note that, if $A$ and $M$ are $\mathbb{Z}^n$-graded, then $A \ltimes M$ inherits the $\mathbb{Z}^n$-grading.

We record here for later use a lemma that relates the homological information of an idealization to that of its building blocks. 

\begin{lem}\label{lem: Poincare series}\cite[Theorem 2]{G}
   Let $A$ be a standard graded $\F$-algebra and let $M$ be a finitely generated graded $A$-module generated in degree one. Then 
        \begin{equation} \label{eq:poincare}
        P_{A \ltimes M}(s,t) = \dfrac{P_{A}(s,t)}{1-t P^{M}_{A}(s,t)},
        \end{equation}
        where $P^{M}_{A}(s,t)$ is the Poincar\'e series of $M$ as an $A$-module, as defined in \eqref{eq: def_poincare}.
\end{lem}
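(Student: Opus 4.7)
The plan is to derive the identity by constructing, from minimal $A$-free resolutions of $\F$ and of $M$, an explicit minimal graded $B$-free resolution of $\F$, and then reading off the bigraded ranks. Write $F_\bullet \to \F$ and $G_\bullet \to M$ for the minimal $A$-free resolutions, so that their bigraded Poincar\'e series are $P_A(s,t)$ and $P_A^M(s,t)$, respectively. Since $M^2 = 0$ in $B$ and $A = B/M$, every $A$-module is naturally a $B$-module on which $M$ acts trivially; in particular each term of $F_\bullet$ becomes a free $B$-module after base change along the split inclusion $A \hookrightarrow B$. The resulting complex $\widetilde{F}_\bullet := B \otimes_A F_\bullet$ of free $B$-modules is however not exact: the splitting $B = A \oplus M$ of $A$-modules gives $H_i(\widetilde{F}_\bullet) = \Tor^A_i(M, \F)$ for $i \geq 1$, so $\widetilde{F}_\bullet$ resolves $\F$ only up to a defect whose bigraded dimension accounts for $t \cdot P_A(s,t) P_A^M(s,t)$, where the factor $t$ records the homological shift needed to glue in the obstruction.

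The defect is killed by iteration. At stage $k \geq 1$, one attaches a shifted copy of $B \otimes_A \bigl(F_\bullet \otimes_\F (G_\bullet)^{\otimes k}\bigr)$, with connecting differentials obtained by lifting cycles from $G_\bullet$ back through $\widetilde{F}_\bullet$; after stage $k$, the obstruction in higher homological degrees is controlled by a new term of bigraded dimension $(t P_A^M)^{k+1} P_A$. Taking the direct limit produces an exact complex of free $B$-modules resolving $\F$. Counting free generators in each bidegree yields
\[
P_B(s,t) \;=\; P_A(s,t) \sum_{k \geq 0} \bigl(t P_A^M(s,t)\bigr)^k \;=\; \frac{P_A(s,t)}{1 - t P_A^M(s,t)},
\]
which is the desired identity.

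The main obstacle is verifying that this iterated mapping-cone-style construction produces a genuinely \emph{minimal} resolution: the connecting differentials must be chosen so that every entry lies in $\mathfrak{m}_B = \mathfrak{m}_A \oplus M$. This is where the hypothesis that $M$ be generated in degree one becomes essential, since it forces each newly attached stage to strictly raise the internal degree relative to its predecessor, preventing unit entries from appearing when the connecting maps are assembled. Once minimality is in place, exactness of the direct-limit complex can be established by comparing the two natural filtrations on the total bicomplex: one filtration degenerates at $E_1$ into the $A$-resolution $F_\bullet$ of $\F$, while the other converges through iterated copies of $G_\bullet$ to kill all higher homology. Alternatively, one can package everything as a change-of-rings argument for the trivial square-zero extension $A \hookrightarrow B$, for which the series identity above is classical.
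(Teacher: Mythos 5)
The paper does not actually prove \Cref{lem: Poincare series}: it is quoted verbatim from Gulliksen \cite[Theorem 2]{G}. So what you are proposing is a re-proof of that classical result, and while the shape of your construction (layers of the form $B\otimes_A\bigl(F_\bullet\otimes (G_\bullet)^{\otimes k}\bigr)$, shifted and glued by lifted connecting maps, where $B=A\ltimes M$) is indeed the shape of the known argument, the two points on which the theorem actually rests are asserted rather than proved. First, exactness: an unverified iterated mapping-cone (Cartan--Eilenberg-type) procedure only produces \emph{some} $B$-free resolution of $\F$ built from $F_\bullet$ and $G_\bullet$, and hence only the coefficientwise inequality $P_{A\ltimes M}(s,t)\leq P_A(s,t)/(1-tP_A^M(s,t))$; the equality is precisely where the square-zero structure must be used, i.e.\ one has to show that because $M^2=0$ and the $B$-action on $M$ factors through $A$, the connecting maps can be chosen so that the glued complex is exact with no further correction terms. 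Your sketch never invokes $M^2=0$ beyond noting that $A$-modules are $B$-modules, and the proposed ``two filtrations on the total bicomplex'' cannot be run as stated, since the glued object is not a bicomplex (the connecting maps jump across layers and homological degrees); making that spectral-sequence or multicomplex argument precise \emph{is} the content of the theorem. (There is also a bookkeeping slip: the homology defect of $\widetilde{F}_\bullet=B\otimes_A F_\bullet$ has bigraded series $P_A^M(s,t)$, together with an extra $M\otimes_A\F$ in degree zero, not $t\,P_A(s,t)P_A^M(s,t)$; the factor $P_A$ only appears after the later stages.)

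Second, minimality, where your justification rests on a wrong reason. The hypothesis that $M$ is generated in degree one has nothing to do with minimality: Gulliksen's theorem holds for any finitely generated module over a local ring, and in this paper the hypothesis is only there so that $A\ltimes M$ is standard graded. Moreover, the degree count you invoke does not exclude unit entries: generators in consecutive homological degrees of your glued complex can share the same internal degree as soon as $F_\bullet$ or $G_\bullet$ has nonlinear syzygies --- which is exactly the regime in which the paper applies the lemma (detecting how far the resolution of $\omega_{\F[\Gamma]}$ is from linear), so the argument would fail precisely where it is needed. What gives minimality in the actual proof is that every entry of the differential is either an entry of the minimal differentials of $F_\bullet$ or $G_\bullet$ (hence lies in $\mathfrak{m}_A\subseteq\mathfrak{m}_B$) or is a multiple of an element of $M\subseteq\mathfrak{m}_B$, and verifying that the lifts can be chosen this way is again where $M^2=0$ enters. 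Finally, the closing appeal to a ``classical change-of-rings identity for the trivial square-zero extension'' is circular: that identity \emph{is} the statement being proved, and is exactly what the paper cites \cite{G} for.
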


	\begin{defi}
		A standard graded Cohen--Macaulay $\F$-algebra $A$ is \emph{superlevel} if it is level and its canonical module $\omega_A$ has a linear presentation as an $A$-module, i.e., there is an exact sequence
		\[
			A(a-1)^{b}~\overset{\varphi_1}{\rightarrow}~ A(a)^{g}~\overset{\varphi_0}{\rightarrow}~ \omega_A~\to ~0,
		\]
		with $a = a(A)$ as in \Cref{def: canonical}.
	\end{defi}
	
	We end this subsection by collecting several results proved in \cite[Section 3]{MSSI}. Parts iv and v follow immediately by analyzing the graded structure of the idealization and using the formula for the Hilbert series of the canonical module \cite[Corollary 4.4.6]{BH}.
	
	\begin{theorem}[{\cite[Proposition 3.2, Lemma 3.3]{MSSI}}] \label{thm:idealization}
		Let $A = S/I$ be a standard graded and level $\F$-algebra and let $\widetilde{A} := A \ltimes \omega_A(-a(A)-1)$. The following statements hold:
		\begin{enumerate}
			\item $\widetilde{A}$ is a standard graded and Gorenstein $\F$-algebra;
			\item if $\omega_A$ is minimally generated by $g$ elements, then
		\begin{equation} \label{eq: idealization_presentation}
			\widetilde{A}~=~ \frac{S[z_1,\dots z_g]}{I + \mathcal{L} + (z_1, \dots, z_g)^2}, 
		\end{equation}
		where 
		\begin{equation}
			\mathcal{L}~:=~\left(\sum_{i=1}^g f_iz_i ~ : ~ (f_1,\dots, f_g)\in \mathrm{Syz}^S_1(\omega_A)\right);
		\end{equation}
			\item if $A$ is quadratic, then $\widetilde{A}$ is quadratic if and only if $A$ is superlevel.
			\item $h_i(\widetilde{A})=h_i(A)+h_{s-i+1}(A)$ for every $1<i<s-1$. 
			\item $\dim(\widetilde{A}) = \dim(A)$.
		\end{enumerate}

	\end{theorem}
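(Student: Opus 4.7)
The first three items are precisely \cite[Proposition 3.2 and Lemma 3.3]{MSSI}, so the plan is to quote them without reproof. The remaining items (iv) and (v) require only a brief direct analysis of the graded structure of the idealization $\widetilde{A} = A \ltimes \omega_A(-a(A)-1)$, and that is where I would concentrate.

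\emph{Dimension (v).} I would introduce the submodule $N := \{0\} \ltimes \omega_A(-a(A)-1)$ of $\widetilde{A}$ and observe that the defining multiplication rule of the idealization forces $(0, m_1)\cdot (0, m_2) = (0,0)$, so $N^2 = 0$. Hence $N$ lies in the nilradical and thus in every prime ideal of $\widetilde{A}$, while the projection $\widetilde{A} \twoheadrightarrow \widetilde{A}/N$ identifies the quotient with $A$ as rings. Since killing a nilpotent ideal does not change Krull dimension, one concludes $\dim \widetilde{A} = \dim(\widetilde{A}/N) = \dim A$.

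\emph{Hilbert series (iv).} As a graded vector space one has $\widetilde{A} = A \oplus \omega_A(-a(A)-1)$, and the additivity of the Hilbert series along this direct sum yields
\[
\mathrm{Hilb}(\widetilde{A}, t) \;=\; \mathrm{Hilb}(A, t) \;+\; t^{a(A)+1}\,\mathrm{Hilb}(\omega_A, t).
\]
I would then invoke the Bruns--Herzog duality $\mathrm{Hilb}(\omega_A, t) = (-1)^d\, \mathrm{Hilb}(A, t^{-1})$ from \cite[Corollary 4.4.6]{BH}. Writing $\mathrm{Hilb}(A,t) = \frac{\sum_{i=0}^{s} h_i(A)\, t^i}{(1-t)^d}$ and using the identity $a(A) = s - d$ (valid because $A$ is level), a short simplification reduces the second summand to $\frac{\sum_{i=0}^{s} h_i(A)\, t^{s-i+1}}{(1-t)^d}$. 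Summing the two numerators and reindexing the second via $j := s-i+1$ reads off the desired formula $h_i(\widetilde{A}) = h_i(A) + h_{s-i+1}(A)$ in the claimed range.

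\textbf{Potential obstacle.} Nothing conceptual blocks the argument; the only place to slip is in the Hilbert-series calculation, where the internal twist $(-a(A)-1)$ must be combined with the Bruns--Herzog sign $(-1)^d$ and the factorization $(1-t^{-1})^d = (-1)^d(1-t)^d/t^d$ without dropping a sign or a power of $t$. Once this is handled, both (iv) and (v) drop out.
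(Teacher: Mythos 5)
Your proposal is correct and matches the paper's treatment: parts i--iii are quoted from \cite[Proposition 3.2, Lemma 3.3]{MSSI}, and parts iv--v are obtained exactly as the paper indicates, by analyzing the graded decomposition $A \oplus \omega_A(-a(A)-1)$ together with the canonical-module Hilbert series formula of \cite[Corollary 4.4.6]{BH} (your Hilbert-series bookkeeping, including $a(A)=s-d$ and the twist by $t^{a(A)+1}$, checks out, noting only that $a(A)=s-d$ is really a consequence of Cohen--Macaulayness, which levelness presupposes).
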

	
		\subsection{Koszul algebras}
The last ingredient we need is a special class of standard graded algebras called \emph{Koszul algebras}: for a survey, we direct the reader to \cite{CDR}. 

\begin{defi}
    A standard graded $\F$-algebra $A$ is \emph{Koszul} (over $\F$) if $\F$ has a linear resolution as an $A$-module.
\end{defi}
In the above definition, $\F$ is identified with the quotient of $A$ by its maximal homogeneous ideal. Observe that, by definition, $A$ is Koszul if and only if $P_{A}(s,t)\in\Z[[st]]$.

Due to the fact that a minimal resolution of $\F$ as an $A$-module is typically infinite, it is in general very hard to prove that a certain algebra $A$ is Koszul. However, the following is well-known.

\begin{prop}
    Let $A=\F[x_1, \ldots, x_n]/I$ be a standard graded $\F$-algebra and assume without loss of generality that $I \subseteq (x_1, \ldots, x_n)^2$. Then:
    \begin{itemize}
    \item if $A$ is Koszul, then $I$ is generated in degree $2$;
    \item if $I$ has a Gr\"{o}bner basis of quadrics, then $A$ is Koszul.
    \end{itemize}
    
\end{prop}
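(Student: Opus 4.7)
The proposition contains two independent implications, which I would handle separately; both boil down to standard facts, but I would still want to write out the first carefully.

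For the first bullet, assume $A$ is Koszul and deduce that $I$ has only quadratic minimal generators. Since $I\subseteq (x_1,\dots,x_n)^2$, the classes of the variables form a minimal generating set of the maximal homogeneous ideal $\mathfrak{m}$ of $A$, so the minimal $A$-resolution of $\F$ begins as $A(-1)^n\xrightarrow{d_1}A\to \F\to 0$ with $d_1(e_i)=x_i$. Koszulness forces $\beta^A_{2,d}(\F)=0$ for every $d>2$, and
\[
\beta^A_{2,d}(\F)\;=\;\dim_\F\bigl(\ker(d_1)/\mathfrak{m}\ker(d_1)\bigr)_d.
\]
I would argue by contrapositive: any minimal generator $g\in I$ of degree $d>2$ yields a degree-$d$ element of $\ker(d_1)$ that does not lie in $\mathfrak{m}\ker(d_1)$. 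Since $g\in\mathfrak{m}^2$, I can write $g=\sum_i x_i f_i$ in $S$ with $\deg f_i=d-1$, and then $(\overline{f}_1,\dots,\overline{f}_n)\in\ker(d_1)_d$. If this tuple lay in $\mathfrak{m}\ker(d_1)$, say equal to $\sum_k x_k v_k$ with $v_k\in\ker(d_1)$, then lifting each $v_k$ to $S$ and using the relation $\sum_m x_m\tilde{v}_{k,m}\in I$ one obtains $g\in\mathfrak{m}\cdot I$, contradicting the minimality of $g$.

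For the second bullet, fix a term order $<$ such that $I$ has a Gr\"obner basis of quadrics. A standard consequence of the flat Gr\"obner degeneration from $A$ to $S/\mathrm{in}_<(I)$ is the coefficient-wise inequality
\[
P_A(s,t)\;\le\;P_{S/\mathrm{in}_<(I)}(s,t).
\]
Since $\mathrm{in}_<(I)$ is a quadratic monomial ideal, $S/\mathrm{in}_<(I)$ is Koszul by a classical theorem of Fr\"oberg, so the right-hand side belongs to $\Z[[st]]$. Hence $P_A(s,t)\in\Z[[st]]$ as well, i.e.\ $A$ is Koszul.

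The only real subtlety sits in the first bullet: one has to translate the statement \emph{"the syzygy attached to $g$ lies in $\mathfrak{m}\ker(d_1)$"} into \emph{"$g\in\mathfrak{m}I$"}, which requires the explicit manipulation of liftings described above. Once this bookkeeping is carried out, both implications reduce to well-established facts (upper semicontinuity of Betti numbers under Gr\"obner deformations and Koszulness of quadratic monomial algebras).
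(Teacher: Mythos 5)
Your proposal is correct. Note that the paper itself offers no proof of this proposition: it is stated as well-known (with the survey \cite{CDR} as the implicit reference), so there is nothing to compare against beyond the standard arguments, which are exactly the ones you give. For the first bullet, your identification $\beta^A_{2,d}(\F)=\dim_\F(\ker(d_1)/\mathfrak{m}\ker(d_1))_d$ is legitimate because $I\subseteq(x_1,\ldots,x_n)^2$ guarantees that $A(-1)^n\to A\to\F\to 0$ is the beginning of the \emph{minimal} resolution, and your lifting computation does show that a degree-$d$ minimal generator $g$ of $I$ with $d>2$ produces a nonzero class in $(\ker(d_1)/\mathfrak{m}\ker(d_1))_d$. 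One small expository point: in the displayed manipulation you only invoke the relations $\sum_m x_m\tilde v_{k,m}\in I$, but you also need that each $f_i-\sum_k x_k\tilde v_{k,i}$ lies in $I$ (it maps to $0$ in $A$); writing $g=\sum_i x_i\bigl(f_i-\sum_k x_k\tilde v_{k,i}\bigr)+\sum_k x_k\bigl(\sum_i x_i\tilde v_{k,i}\bigr)$ then exhibits $g\in\mathfrak{m}_S I$, contradicting minimality. For the second bullet, the coefficient-wise bound $P_A(s,t)\le P_{S/\mathrm{in}_<(I)}(s,t)$ from the Gr\"obner degeneration together with Fr\"oberg's theorem for quadratic monomial ideals is precisely the standard route, and your conclusion via $P_A(s,t)\in\Z[[st]]$ matches the paper's own characterization of Koszulness.
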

Having a Gr\"{o}bner basis of quadrics is actually a much stronger condition than Koszulness, and we will see in \Cref{sec: shellable} how this translates combinatorially for the algebras studied in the present paper.

We close this section by a statement highlighting how Koszulness interacts with suitable idealizations.

\begin{lem}\label{lem: from omega to F}
        Let $A$ be a Koszul $\F$-algebra and let $M$ be a finitely generated graded $A$-module generated in degree one. Then the residue field $\F$ has an $(A \ltimes M)$-resolution which is linear for $k$ steps if and only if the $A$-module $M$ has a resolution which is linear for $k-1$ steps. In particular, $A \ltimes M$ is Koszul if and only if $M$ has a linear $A$-resolution.
\end{lem}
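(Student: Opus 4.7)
The central tool is the Poincar\'{e} series identity of \Cref{lem: Poincare series}, which I would rewrite as
\[
P_{A \ltimes M}(s,t)\bigl(1 - t\, P^M_A(s,t)\bigr) = P_A(s,t).
\]
Koszulness of $A$ means $P_A(s,t) \in \Z[[st]]$, so its bigraded support lies on the diagonal $j=i$. I would rephrase both sides of the claim as support conditions on Poincar\'{e} series: the $(A\ltimes M)$-resolution of $\F$ is linear for $k$ steps iff $\beta^{A\ltimes M}_{i,j}(\F)=0$ for every $i \leq k$ and $j\neq i$, while the $A$-resolution of $M$ is linear for $k-1$ steps iff $\beta^A_{i,j}(M)=0$ for every $i\leq k-1$ and $j\neq i+1$.

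I would then proceed by induction on $k\geq 1$, extracting the coefficient of $s^j t^k$ from both sides of the identity above to obtain the recursion
\[
\beta^{A\ltimes M}_{k,j}(\F) \;=\; \beta^A_k(\F)\,\delta_{k,j} \;+\; \sum_{\substack{i'+i''=k-1\\ j'+j''=j}} \beta^A_{i',j'}(M)\,\beta^{A\ltimes M}_{i'',j''}(\F).
\]
The base case $k=1$ collapses to $\beta^{A\ltimes M}_{1,j}(\F) = \beta^A_1(\F)\,\delta_{1,j}+\beta^A_{0,j}(M)$, and the vanishing of $\beta^A_{0,j}(M)$ for $j \neq 1$ is exactly the hypothesis that $M$ is generated in degree one (equivalently, ``$M$ is linear for $0$ steps''). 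For the inductive step, suppose either side of the equivalence holds at level $k$; in particular the ``weaker'' form at level $k-1$ holds, and thus by induction both the $\F$-resolution over $A\ltimes M$ and the $A$-resolution of $M$ are already linear for $k-1$ and $k-2$ steps respectively. Now fix $j \neq k$ and inspect the recursion: the term $\beta^A_k(\F)\,\delta_{k,j}$ vanishes, and the support constraints $j'=i'+1$ (for $i'\leq k-2$) and $j''=i''$ (for $i''\leq k-1$) provided by these partial linearities force the sum $i'+1+i'' = k$, contradicting $j \neq k$ for all summands except possibly $(i',i'')=(k-1,0)$. The latter equals $\beta^A_{k-1,j}(M)\cdot\beta^{A\ltimes M}_{0,0}(\F) = \beta^A_{k-1,j}(M)$. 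Hence $\beta^{A\ltimes M}_{k,j}(\F)=0$ for all $j\neq k$ if and only if $\beta^A_{k-1,j}(M)=0$ for all $j>k$; since minimality (together with $M$ being generated in degree $1$) already forces $\beta^A_{k-1,j}(M)=0$ for $j < k$, this is exactly linearity of the $M$-resolution at step $k-1$.

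The ``in particular'' clause follows by letting $k\to\infty$, since Koszulness of $A\ltimes M$ amounts to linearity of the $\F$-resolution at every step. I do not expect a genuine obstacle here: the argument is a direct coefficient comparison, and the only delicate point is the bigraded bookkeeping in the inductive step, namely separating the vanishings that come from the inductive hypothesis from those that come from minimality of the underlying resolutions.
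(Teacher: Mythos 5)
Your proof is correct and takes essentially the same route as the paper: both rest on the Poincar\'e series formula of \Cref{lem: Poincare series}, rewritten as $P_{A\ltimes M}(s,t)\bigl(1-t\,P^M_A(s,t)\bigr)=P_A(s,t)$, together with a coefficient-by-coefficient comparison exploiting that $P_A$ is supported on the diagonal when $A$ is Koszul. The only difference is organizational: you run an induction on $k$ that isolates the single surviving term and gives $\beta^{A\ltimes M}_{k,j}(\F)=\beta^A_{k-1,j}(M)$ for $j\neq k$, whereas the paper proves the two implications separately, one via nonnegativity of Betti numbers and the other via a minimal-counterexample argument.
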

\begin{proof}
    The statement follows from comparing the coefficients in \eqref{eq:poincare}. Since $A$ is Koszul, one has that $P_{A}(s,t)\in \Z[[st]]$. Let $P_{A \ltimes M}(s,t)=\sum_{i,j} b_{i,j} s^j t^i$ and $P^{M}_{A}(s,t)=\sum_{i,j} c_{i,j} s^j t^i$. After multiplying both sides of \eqref{eq:poincare} by $1-tP^{M}_{A}(s,t)$, we analyze the coefficient of $s^jt^i$ for every $i, j$ with $i < j$, obtaining that
    \begin{equation} \label{eq:b_and_c}
        b_{i,j}-\sum_{h,\ell}b_{h,\ell} c_{i-h-1,j-\ell}=0.
    \end{equation}
    Note that $b_{0,0}=1$ and $b_{0,j}=0$ for every $j>0$. We want to show that
    \begin{equation*}
    b_{i,j} = 0\text{ for every }1 \leq i \leq k, \ j > i \Leftrightarrow c_{i-1,j} = 0 \text{ for every }1 \leq i \leq k, \ j > i.
    \end{equation*}
    
    To prove the left to right arrow it is enough to observe that, for every $1 \leq i \leq k$ and $j > i$, \eqref{eq:b_and_c} implies that $0 = b_{i,j} \geq b_{0,0}c_{i-1,j} = c_{i-1,j} \geq 0$.
    
    Conversely, let us now assume that $c_{i-1,j} = 0$ for every $1 \leq i \leq k$ and $j > i$, and assume by contradiction that there exists $1 \leq m \leq k$ such that $b_{m,q}>0$ for some $q > m$. We can then pick this $m$ to be the smallest possible with respect to this property. Because of \eqref{eq:b_and_c}, it holds that $b_{m,q} = \sum_{h,\ell}b_{h, \ell}c_{m-h-1,q-\ell}$. Since every $h$ in the sum must be strictly smaller than $m$, it follows that $b_{h,\ell}$ vanishes whenever $h < \ell$. Hence, $0 < b_{m, q} = \sum_h b_{h,h}c_{m-h-1, q-h}$, and hence there must be at least one nonvanishing $c_{m-h-1, q-h}$, contradicting the hypothesis.  
\end{proof}

	\settocdepth{section}
	
	\section{Bier balls and spheres}
	
	In this section we recall a simple construction which associates with any simplicial complex on $[n]$ (other than the full simplex) an $(n-1)$-dimensional PL ball which is a proper subcomplex of the boundary complex of the $n$-dimensional cross-polytope. Such complexes were notably studied in higher generality by Murai \cite{Mur}.

	\begin{defi}\label{def: whiskers}
		Let $\Delta$ be a simplicial complex on $n$ vertices, and let $I_{\Delta}\subseteq \F[\mathbf{x}]$ be its Stanley--Reisner ideal. We define a new complex $\Gamma$ on $2n$ vertices by
		\[
			I_{\Gamma}~:=~I_{\Delta}~+~( x_iy_i : i\in [n] ) ~ \subseteq ~ \F[\mathbf{x},\mathbf{y}].
		\]
		If $\Delta$ is not the $(n-1)$-dimensional simplex, we will call $\Gamma$ the \emph{Bier ball} associated with $\Delta$: see \Cref{prop: Bier dual} below.
	\end{defi}
	\begin{rema}
	     Most of the extant literature does not really deal with Bier balls, but rather with the PL spheres that bound them: these are known as \emph{Bier spheres} after Thomas Bier, who introduced them in \cite{Bie} (see also \cite{BPSZ,DFN}).
	\end{rema}
	\begin{rema}
	    For $\Delta$ flag, the construction in \Cref{def: whiskers} has already appeared in the literature several times, notably in \cite[Proposition 4.1]{CVflag}. The earliest reference we are aware of is \cite[Proposition 2.2]{Vil}, which has a slightly different take on the subject. Indeed, if $\Delta$ is flag, the ideal $I_{\Delta}$ can be seen as the edge ideal of a graph $G$, and $I_{\Gamma}$ becomes the edge ideal of the graph obtained from $G$ by adding extra leaves (or ``whiskers'') to every vertex.
	\end{rema}
	
	\begin{prop}\label{prop: Bier dual}
			Let $\Delta$ be a simplicial complex on $[n]$ and let $\Gamma$ be as in \Cref{def: whiskers}. Then:
			\begin{enumerate}
                \item $\Gamma$ is a shellable $(n-1)$-dimensional simplicial complex;
                \item $h(\Gamma)=f(\Delta)$;
                \item if $\Delta$ is not the $(n-1)$-simplex, then $\Gamma$ is a PL $(n-1)$-ball and its boundary complex $\partial\Gamma$ is the PL $(n-2)$-sphere known as the \emph{Bier sphere} associated with $\Delta$ \cite{Bie};
			    \item the canonical module $\omega_{\F[\Gamma]}$ is isomorphic as an $\F[\Gamma]$-module to the ideal of $\F[\Gamma]$ generated by the monomials $\mathbf{y}^{[n] \setminus F}$, where $F$ ranges over all facets of $\Delta$ (here $\mathbf{y}^{\varnothing}$ is equal to $1$). In other words, the canonical module of $\F[\Gamma]$ is the image of the Alexander dual ideal of $\Delta$ (in the $y$-variables) under the projection $\F[\mathbf{x}, \mathbf{y}] \twoheadrightarrow \F[\Gamma]$.
			\end{enumerate}
	\end{prop}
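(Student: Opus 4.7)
The statement is a four-in-one proposition, so the plan is to tackle each item in order, with parts (i) and (ii) providing the key combinatorial scaffolding.

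For (i) and (ii), I would first identify the facets of $\Gamma$ explicitly. Unpacking the definition of $I_\Gamma$, a subset of $[n] \cup [n]'$ is a face of $\Gamma$ exactly when its $x$-part is a face of $\Delta$ and no index appears in both its $x$- and $y$-parts. Consequently the facets of $\Gamma$ are precisely $\sigma_F := F \sqcup ([n]\setminus F)'$ as $F$ ranges over $\Delta$, each of size $n$; this already shows $\Gamma$ is pure of dimension $n-1$. To shell $\Gamma$, I would order the facets $\sigma_F$ via any linear extension of the inclusion order on $\Delta$ with smaller faces first. Using the formula $\sigma_F \cap \sigma_{F'} = (F \cap F') \sqcup ([n] \setminus (F \cup F'))'$, a short case analysis on whether $F' \subsetneq F$ or $F' \not\subseteq F$, together with the fact that each $F \setminus \{i\}$ also lies in $\Delta$, gives
\[
\sigma_F \cap \bigcup_{F' \text{ earlier}} \sigma_{F'} \;=\; \bigcup_{i \in F} \bigl(\sigma_F \setminus \{i\}\bigr),
\]
which is a pure $(n-2)$-dimensional subcomplex of $\partial \sigma_F$, as required for a shelling. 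The restriction set of $\sigma_F$ is then $F$, and the standard shelling formula $h_k = \#\{i : |R_i|=k\}$ immediately yields $h_k(\Gamma) = f_{k-1}(\Delta)$, proving (ii).

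For (iii), I would note that a codimension-one face of $\Gamma$ has the form $F \sqcup G'$ with $|F|+|G| = n-1$, $F \in \Delta$, and unique missing index $i \in [n] \setminus (F \cup G)$; its two candidate extensions (adding $i$ or $i'$) are both available precisely when $F \cup \{i\} \in \Delta$. Hence $\Gamma$ is a pseudo-manifold, and together with shellability this forces $\Gamma$ to be a PL ball or a PL sphere. Since $\Gamma$ is naturally a subcomplex of the boundary complex of the $n$-dimensional cross-polytope (a PL $(n-1)$-sphere whose $2^n$ facets are indexed by subsets of $[n]$) and omits the facet associated with any subset not in $\Delta$, it is a proper subcomplex whenever $\Delta$ is not the full simplex, and therefore a PL $(n-1)$-ball. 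The boundary $\partial \Gamma$ is then a PL $(n-2)$-sphere whose facets are those $F \sqcup G'$ admitting a unique extension, i.e.\ those with $F \in \Delta$ and $F \cup \{i\} \notin \Delta$; setting $H := [n] \setminus G$, this recovers the standard description of the Bier sphere of $\Delta$ via covering pairs $F \subsetneq H$ with $F \in \Delta$ and $H \notin \Delta$.

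For (iv), the plan is to apply the Stanley-type decomposition of the canonical module of a shellable Cohen--Macaulay complex to the shelling constructed in (i). This should yield a $\Z^{2n}$-graded isomorphism of $\F[\Gamma]$-modules
\[
\omega_{\F[\Gamma]} \;\cong\; \bigoplus_{F \in \Delta} \mathbf{y}^{[n]\setminus F} \cdot \F[\,x_i,\, y_j \,:\, i \in F,\ j \in [n]\setminus F\,],
\]
since the restriction set $R_F = F$ has complement $\sigma_F \setminus R_F = ([n]\setminus F)'$ inside the facet $\sigma_F$. This identifies $\omega_{\F[\Gamma]}$ with the $\F[\Gamma]$-ideal generated by $\{\mathbf{y}^{[n]\setminus F} : F \in \Delta\}$. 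Finally, a generator indexed by a non-facet $F \subsetneq F' \in \Delta$ satisfies $\mathbf{y}^{[n]\setminus F} = \mathbf{y}^{F'\setminus F}\,\mathbf{y}^{[n]\setminus F'}$ in $\F[\Gamma]$ and is therefore redundant, so a minimal generating set is indexed by the facets of $\Delta$, as claimed. The hardest step will be (iv): to avoid any ambiguity in the ``$y$-complementary'' form of Stanley's decomposition for the canonical module, an alternative route would be to (a) verify that the claimed ideal $J := (\mathbf{y}^{[n]\setminus F} : F \in \mathcal{F}(\Delta)) \cdot \F[\Gamma]$ has the reversed $h$-vector of $\F[\Gamma]$, and (b) construct an explicit multigraded isomorphism $\omega_{\F[\Gamma]} \cong J$ using Gr\"abe's formula for the multigraded components of $\omega_{\F[\Gamma]}$ in terms of reduced cohomology of links of $\Gamma$.
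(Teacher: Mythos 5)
Your parts (i)--(iii) are essentially correct and follow the same route as the paper: you identify the facets $\sigma_F$, shell them by an order in which smaller faces of $\Delta$ come first, read off $h(\Gamma)=f(\Delta)$ from the restriction faces, and deduce ball-or-sphere from shellability plus the fact that each codimension-one face lies in at most two facets. Your way of excluding the sphere case (a closed $(n-1)$-manifold cannot be a proper subcomplex of the $(n-1)$-sphere $\partial\Diamond_n$) is a harmless variant of the paper's observation that some codimension-one face is free precisely when $\Delta$ has a missing face.

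The genuine gap is in (iv). The ``Stanley-type decomposition of the canonical module of a shellable Cohen--Macaulay complex'' you invoke is not an available theorem in the form you need: for a general shellable complex $\omega$ does not even embed as an ideal (that requires, e.g., the ball structure), and a Stanley decomposition is only a decomposition of multigraded vector spaces, so even granting $\omega_{\F[\Gamma]}\cong\bigoplus_{F\in\Delta}\mathbf{y}^{[n]\setminus F}\F[x_i,y_j: i\in F, j\notin F]$ degreewise (note the summands are not $\F[\Gamma]$-submodules), this records Hilbert function data only and cannot ``identify $\omega_{\F[\Gamma]}$ with the ideal $(\mathbf{y}^{[n]\setminus F}: F\in\mathcal{F}(\Delta))\F[\Gamma]$'' as a module. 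The same objection disposes of your fallback (a): matching the reversed $h$-vector matches Hilbert series, which does not determine the module. Fallback (b) via Gr\"abe's description of the multigraded components of $\omega$ could be made to work, but you have not carried it out, and doing so essentially reproves the fact the paper simply cites: for a homology (here PL) ball, $\omega_{\F[\Gamma]}$ is the image of $I_{\partial\Gamma}$ under $\F[\mathbf{x},\mathbf{y}]\twoheadrightarrow\F[\Gamma]$ \cite[Theorem 5.7.1]{BH}. Given your (iii), what remains is the short combinatorial check that the minimal faces of $\Gamma$ not lying in $\partial\Gamma$ are exactly the sets $\{y_j: j\notin F\}$ with $F\in\mathcal{F}(\Delta)$, so that $I_{\partial\Gamma}=I_{\Gamma}+(\mathbf{y}^{[n]\setminus F}: F\in\mathcal{F}(\Delta))$; this is the step your write-up is missing. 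Finally, the case where $\Delta$ is the full $(n-1)$-simplex (so $\Gamma$ is a sphere, $\omega_{\F[\Gamma]}\cong\F[\Gamma]$ and $\mathbf{y}^{\varnothing}=1$) needs to be treated separately in any boundary-based argument, and is not addressed in your proposal.
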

	We note that \Cref{prop: Bier dual} was essentially already proved in \cite[Lemma 1.4, Theorem 1.14 and Theorem 3.6]{Mur}, where a more general construction associating a Bier ball with every multicomplex is studied. However, we will sketch our approach below for the reader's benefit.
	
    First of all, note that the facets of $\Gamma$ are precisely the sets $F^{\sharp} = \{x_i \mid i \in F\} \cup \{y_j \mid j \notin F\}$, where $F$ ranges over all faces of $\Delta$. Order (partially) the facets of $\Gamma$ so that $F^{\sharp} \prec G^{\sharp}$ when $\dim(F) < \dim(G)$. It is left to the reader to show that any total order refining $\prec$ gives a shelling order for $\Gamma$ (for the definition of a shelling order, see \Cref{def: shelling}). Moreover, since the minimal new face we are adding at each step of the shelling has cardinality $|F|$, the claim about the $h$-vector of $\F[\Gamma]$ follows from \cite[Proposition III.2.3]{St_green}. 
    
	Now note that every face of $\Gamma$ of codimension $1$ is contained in at most two facets. Indeed, every codimension $1$ face is of the form $F^{\sharp} \setminus\{v_i\}$, where $i \in [n]$ and $v_i$ is either $x_i$ or $y_i$. As $\{x_j,y_j\}$ is a nonface of $\Gamma$ for every $j \in [n]$, one can extend the given codimension $1$ face to a facet only by adding either $y_i$ (which always yields a facet) or $x_i$ (which might yield a facet, depending on $\Delta$). By \cite[Theorem 11.4]{BjTOP}, the shellable complex $\Gamma$ must then be either a PL ball or a PL sphere, with the former case occurring when there exists a codimension $1$ face contained in exactly one facet. This happens whenever $\Delta$ has at least a missing face, i.e., it is not the full $(n-1)$-simplex.
	
	Finally, the last statement follows from the description of the canonical module of any homology ball (see for instance \cite[Theorem 5.7.1]{BH}), which says that $\omega_{\F[\Gamma]}$ is the image of $I_{\partial \Gamma}$ under the projection $\mathbb{F}[\mathbf{x},\mathbf{y}]\twoheadrightarrow \mathbb{F}[\Gamma]$. Recalling that $\partial\Delta$ consists of the codimension $1$ faces of $\Gamma$ contained in exactly one facet, one checks that $I_{\partial \Gamma} = I_{\Gamma} + (\mathbf{y}^{[n] \setminus F} \mid F \in \mathcal{F}(\Delta))$. This is precisely the definition of a Bier sphere, see for instance \cite[Corollary 5.3]{DFN}. Note that if $\Delta$ is the $(n-1)$-simplex, then $\Gamma$ is actually a sphere, $\F[\Gamma]$ is Gorenstein and $\omega_{\F[\Gamma]} \cong \F[\Gamma]$; since $\mathbf{y}^{\varnothing} = 1$, we are done also in this case.
	
	The simplicial complex $\Gamma$ is actually vertex decomposable \cite[Remark 1.8]{Mur}, a property which is stronger than shellability.  Moreover, any Bier $(n-1)$-ball or $(n-2)$-sphere is naturally a subcomplex of the boundary complex $\Diamond_n$ of the $n$-cross-polytope, with $I_{\Diamond_n} = (x_iy_i : i\in [n])$.
	
	\begin{exam}\label{ex: Bier}
		Let $n=3$ and $\Delta$ be the simplicial complex $\{\varnothing, x_1,x_2,x_3,x_1x_2\}$. Then $I_{\Gamma}= (x_1x_3, x_2x_3) + (x_1y_1,x_2y_2,x_3y_3)$, and $\Gamma$ is the $2$-dimensional pure shellable homology ball with shelling order \[y_1y_2y_3 \prec \underline{x_1}y_2y_3 \prec y_1\underline{x_2}y_3 \prec y_1y_2\underline{x_3} \prec \underline{x_1x_2}y_3.\] Observe that $h(\Gamma)=f(\Delta)=(1,3,1)$. The complex $\partial\Gamma = \langle x_1x_2, x_2y_1,y_1x_3,y_2x_3,x_1y_2\rangle$ is a $5$-cycle.
	\end{exam}
	\begin{figure}[h]
	    \centering
	    \includegraphics{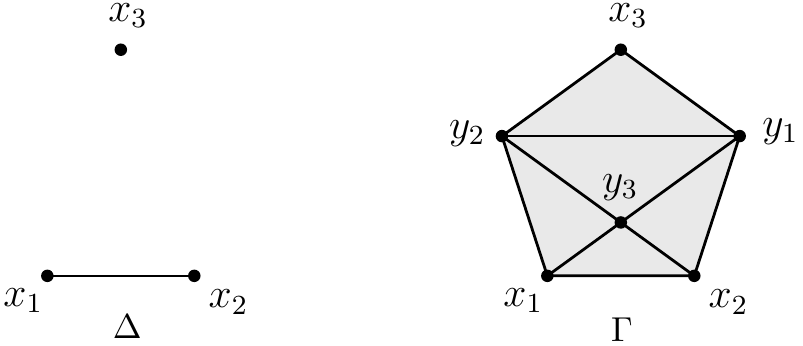}
	    \caption{The simplicial complex $\Delta$ as in \Cref{ex: Bier} and the associated Bier ball $\Gamma$.}
	    \label{fig:my_label}
	\end{figure}
	
	\begin{rema}
		The Bier sphere  $\partial\Gamma$ in \Cref{ex: Bier} is flag, but this is usually not the case in general. In \cite{HK}, the authors prove that there are only $8$ possible complexes $\Delta$ which give rise to flag Bier spheres.
	\end{rema}

	\section{\texorpdfstring{From Bier balls to Gorenstein algebras: the ring $R_{\Delta}$}{From Bier balls to Gorenstein algebras: the ring RDelta}}
	For the rest of this article we will be concerned with idealizing the canonical module of the Stanley--Reisner ring of a Bier ball. We set some notation for the rest of the paper.
	
	\begin{nota} \label{notat:main}
	From now on, if not specified otherwise:
	\begin{itemize}
	    \item $\Delta$ is a pure $(d-1)$-dimensional simplicial complex on $n$ vertices, and its Stanley--Reisner ideal $I_{\Delta}$ lives in $\F[x_1, \ldots, x_n]$;
	    \item $S$ is the polynomial ring $\F[\mathbf{x}, \mathbf{y}]=\F[x_1, \ldots, x_n, y_1, \ldots, y_n]$;
	    \item $\Gamma$ is the $(n-1)$-dimensional Bier ball obtained from $\Delta$ as in \Cref{def: whiskers} (with a slight abuse of notation, we will use the name ``Bier ball'' also when $\Delta$ is the full simplex);
	    \item $R_{\Delta}$ is the $n$-dimensional standard graded Gorenstein $\F$-algebra $\widetilde{\F[\Gamma]} = \F[\Gamma] \ltimes \omega_{\F[\Gamma]}(n-d-1)$, as in \Cref{thm:idealization} (note that, by \Cref{prop: Bier dual}.iv, $\F[\Gamma]$ is level and $a(\F[\Gamma]) = -(n-d) = d-n$);
	    \item given two facets $F_1$ and $F_2$ of $\Delta$, we will denote by $b_{F_1, F_2}$ the binomial $\mathbf{y}^{F_1 \setminus F_2}z_{F_1} - \mathbf{y}^{F_2 \setminus F_1}z_{F_2}$ inside $S[z_F \mid F \in \mathcal{F}(\Delta)]$. Notice that $b_{F_2, F_1} = -b_{F_1, F_2}$, and the case where $F_1$ and $F_2$ coincide yields the zero binomial.
	\end{itemize}
	\end{nota}
	
	\begin{rema} \label{rem:hvector_RDelta}
		The construction of $R_{\Delta}$ shows that if $f=(1,f_0,\dots,f_{d-1})$ is the $f$-vector of a pure simplicial complex, then $(1, f_0+f_{d-1},f_1+f_{d-2},\dots,f_{d-1}+f_0,1)$ is the $h$-vector of a Gorenstein standard graded algebra. In particular, it is an $M$-sequence (see \cite{St_green}). We will see a stronger condition on $\Delta$ for this vector to be the the $h$-vector of a quadratic -- or even Koszul -- Gorenstein algebra.
	\end{rema}

	\begin{prop} \label{prop:RDelta_presentation}
	Let $\Delta$ be a pure simplicial complex of dimension $d-1$. Then \[R_{\Delta} = \frac{S[z_F \mid F \in \mathcal{F}(\Delta)]}{I_{\Gamma} + \mathcal{L} + \left(z_F \mid F \in \mathcal{F}(\Delta)\right)^2},\] where \[\mathcal{L} = ( x_iz_F \mid F \in \mathcal{F}(\Delta),\  i \notin F) + (b_{F_1, F_2} \mid F_1, F_2 \in \mathcal{F}(\Delta), F_1 \neq F_2).\]
	In particular, the defining ideal of $R_{\Delta}$ is generated by
	\begin{itemize}
	    \item $\mathbf{x}^{N_i}$, where $N_i$ is a minimal nonface of $\Delta$,
	    \item $x_iy_i$, where $i$ ranges between $1$ and $n$,
	    \item $z_{F_1}z_{F_2}$, where $F_1$ and $F_2$ are (possibly coincident) facets of $\Delta$,
	    \item $x_iz_F$, where $F$ is a facet of $\Delta$ and $i \notin F$,
	    \item $b_{F_1, F_2}$, where $F_1$ and $F_2$ are distinct facets of $\Delta$ and $b_{F_1, F_2}$ is the binomial defined in \Cref{notat:main},
	\end{itemize}
	and such a presentation does not depend on the characteristic of the field $\F$.
	\end{prop}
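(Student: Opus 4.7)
The plan is to apply \Cref{thm:idealization} to $A = \F[\Gamma]$. This algebra is Cohen--Macaulay and level of dimension $d$ with $a$-invariant $d-n$, and by \Cref{prop: Bier dual}.iv its canonical module $\omega_A$ is minimally generated by the squarefree monomials $u_F := \mathbf{y}^{[n]\setminus F}$ for $F \in \mathcal{F}(\Delta)$, all sitting in degree $n-d$. The shift $\omega_A(n-d-1)$ therefore places the generators in degree $1$, making the idealization $R_\Delta$ standard graded. What remains is to identify the first $S$-syzygy module $\mathrm{Syz}_1^S(\omega_A)$ modulo $I_\Gamma$ and to match it with the candidate $\mathcal{L}_{\text{prop}}$ described in the statement.

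First I would verify that the proposed generators are genuine syzygies. If $i \notin F$ then $y_i \mid u_F$, so $x_iu_F$ contains the factor $x_iy_i \in I_\Gamma$, yielding $x_ie_F \in \mathrm{Syz}_1^S(\omega_A)$. For distinct facets $F_1, F_2$, a direct multiplicity count shows that both $\mathbf{y}^{F_1 \setminus F_2}\,u_{F_1}$ and $\mathbf{y}^{F_2 \setminus F_1}\, u_{F_2}$ equal $\mathbf{y}^{[n] \setminus (F_1 \cap F_2)}$ as elements of $S$, so their difference is zero in $S$ and the binomial $b_{F_1, F_2}$ appears as a syzygy after relabeling $e_F \mapsto z_F$.

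The converse inclusion is the step I expect to require the most care. Given $(f_F) \in S^g$ with $\sum_F f_Fu_F \in I_\Gamma$, I would reduce monomial by monomial. First, the type (i) syzygies strip from each $f_F$ every monomial containing a variable $x_i$ with $i \notin F$, reducing to $f_F \in \F[\mathbf{y}, \{x_j : j \in F\}]$. Any monomial still containing a pair $x_jy_j$ lies outright in $I_\Gamma$ and can be dropped. What remains is a sum of monomials $\mathbf{x}^{A}\mathbf{y}^{B}$ with $A \subseteq F$ and $A\cap B = \varnothing$, and a direct check shows such a clean $\mathbf{x}^A\mathbf{y}^B u_F$ can never lie in $I_\Gamma$ (the factor $\mathbf{x}^A$ is supported on the face $F$, and no $x_jy_j$ factor is produced). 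Consequently $\sum_F f_Fu_F$ must vanish identically in $S$ after the reduction. Grouping the remaining terms by common $x$-part $\mathbf{x}^A$ decouples the relation into sub-equations $\sum_{F \supseteq A} g_{F, A}(\mathbf{y})\, u_F = 0$ in $\F[\mathbf{y}]$, one for each $A$. Each such sub-equation is a syzygy of the squarefree monomial ideal $(u_F : F \supseteq A) \subseteq \F[\mathbf{y}]$, whose first syzygies are classically generated by the pairwise Taylor binomials $\mathrm{lcm}(u_{F_1},u_{F_2})/u_{F_i}\cdot e_{F_i}$; a direct inspection identifies these with the $b_{F_1,F_2}$, completing the reduction.

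Once $I_\Gamma + \mathcal{L}_{\text{prop}} = I_\Gamma + \mathcal{L}_{\text{thm}}$ is established, \Cref{thm:idealization} immediately yields the claimed presentation of $R_\Delta$. The explicit bullet list is then just a concatenation of the minimal generators of $I_\Delta$ (the $\mathbf{x}^{N_i}$), of the Bier quadrics $x_iy_i$, of the square-zero relations $(z_F)^2$, and of the two families defining $\mathcal{L}_{\text{prop}}$. Characteristic-independence is manifest because every listed generator is either a monomial or a difference of two monomials with unit coefficients, so neither the set of generators nor the reduction argument above makes any reference to the base field $\F$.
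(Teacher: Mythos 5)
Your proposal is correct, and its overall skeleton matches the paper's: both invoke \Cref{thm:idealization}.ii together with \Cref{prop: Bier dual}.iv, so that everything reduces to computing $\mathrm{Syz}_1^S(\omega_{\F[\Gamma]})$ modulo $I_{\Gamma}$, and both note that characteristic-independence is automatic because all generators are monomials or differences of monomials. Where you diverge is in how the syzygies are computed. The paper follows the augmented-matrix recipe of \cite[Remark 2.5.6]{Singular}: it appends the generators of $I_{\Gamma}$ to the row of the $\mathbf{y}^{F^c}$, applies Schreyer's theorem to conclude that pairwise syzygies of monomials suffice, and then sorts the pairs into those that die after projection, those giving $x_jz_F$, and those giving $b_{F_1,F_2}$. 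You instead argue the two inclusions directly: the easy direction by checking that $x_iz_F$ and $b_{F_1,F_2}$ are honest syzygies (the latter already over $S$, as a Taylor syzygy), and the hard direction by a hands-on cleaning argument — strip $x$-variables outside $F$ using the $x_iz_F$ relations, discard coefficients in $I_{\Gamma}$ (harmless since $I_{\Gamma}$ sits inside the defining ideal anyway), observe that what survives can no longer land in the monomial ideal $I_{\Gamma}$ and hence must vanish identically in $S$, decouple by the exact $x$-monomial, and finish with the classical fact that first syzygies of a monomial ideal in $\F[\mathbf{y}]$ are generated by pairwise Taylor syzygies, which are precisely the $b_{F_1,F_2}$. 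The two routes lean on essentially the same underlying fact (pairwise syzygies of monomials generate), but yours trades the black-box quotient-ring syzygy computation for a more elementary, self-contained reduction, at the cost of the slightly delicate (but correctly executed) bookkeeping that the cleaned-up relation holds already in $S$ and splits along $x$-monomials; the paper's version is shorter once one accepts the Singular/Schreyer machinery and handles the quotient by $I_\Gamma$ uniformly.
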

	\begin{proof}
	    Since the defining ideal of the claimed presentation of $R_{\Delta}$ involves only monomials and binomials of the form $\mathbf{m} - \mathbf{m'}$, such a presentation must hold in every characteristic, see e.g.~\cite[Lemma A.1.(iii)]{DKoszul}.
	    
	    By Theorem \ref{thm:idealization}.ii, in order to find a presentation for $R_{\Delta}$ it is enough to compute the syzygies of $\omega_{\mathbb{F}[\Gamma]}$ as an $S$-module. (Since we are quotienting by $I_{\Gamma}$ in \eqref{eq: idealization_presentation}, we are actually investigating the $\F[\Gamma]$-syzygies of $\omega_{\F[\Gamma]}$, as $\mathrm{Syz}_1^{\F[\Gamma]}(\omega_{\F[\Gamma]}) \cong \mathrm{Syz}_1^S(\omega_{\F[\Gamma]})/I_{\Gamma}\mathrm{Syz}_1^S(\omega_{\F[\Gamma]})$.) 
	    By Proposition \ref{prop: Bier dual}.iv and by the usual Alexander duality (see e.g.~\cite[Proposition 1.37]{MS}), the canonical module $\omega_{\mathbb{F}[\Gamma]}$ can be identified with the monomial ideal of $\F[\Gamma] = S/I_{\Gamma}$ generated by $\{\overline{\mathbf{y}^{F^c}} : F \text{ facet of }\Delta\}$. Here and until the end of the proof, we will use the shorthand $F^c$ to denote $[n] \setminus F$, the complement of $F$, while the overline denotes the class of $\mathbf{y}^{F^c}$ in $\mathbb{F}[\Gamma]$.
	    Hence, the purity of $\Delta$ implies that all such generators have the same degree $n-d$. Calling $F_1, \ldots, F_r$ the facets of $\Delta$, our task is to compute the kernel of the map of $S$-modules \[\begin{split}S(-n+d)^{|\mathcal{F}(\Delta)|} &\to S/I_{\Gamma}\\ \basis_{F_i} &\mapsto \overline{\mathbf{y}^{F_i^c}}\end{split},\] where each $\basis_{F_i}$ has multidegree $F_i^c$. To do so, we follow \cite[Remark 2.5.6]{Singular}: let $N_1, \ldots, N_s$ be the minimal nonfaces of $\Delta$ and let $A$ be the $1$-by-$(r+n+s)$ matrix
	    \[A = (\mathbf{y}^{F_1^c} \ldots \mathbf{y}^{F_r^c} \mid x_1y_1 \ldots x_ny_n \mid \mathbf{x}^{N_1} \ldots \mathbf{x}^{N_s}).\]
	    Ignoring the degrees, the matrix $A$ represents a map $S^{r+n+s} \to S$. The $S$-module obtained by projecting the syzygy module $\mathrm{Syz}^S(A)$ on its first $r$ coordinates is precisely $\mathrm{Syz}^S_1(\omega_{\F[\Gamma]})$.
	    
	    Since all entries of $A$ are monomials, Schreyer's theorem \cite[Theorem 15.10]{Eis} implies that computing syzygies between such monomials is enough to seize $\mathrm{Syz}^S(A)$ and hence $\mathrm{Syz}^S_1(\omega_{\F[\Gamma]})$. Let $a$ and $b$ be two of the entries in $A$. If $a$ and $b$ both belong to $I_{\Gamma}$, the syzygy between them will live in the last $n+s$ components of $S^{r+n+s}$ and will hence not matter for our final aim. From now on, assume thus that $a = \mathbf{y}^{F_i^c}$ for some $i$. 
	    \begin{itemize}
	        \item If $b \in I_{\Gamma}$ and $\gcd(a, b) = 1$, then the syzygy between $a$ and $b$ will vanish after projecting and taking the quotient modulo $I_{\Gamma}$. In particular, this syzygy will not be involved in the presentation of $R_{\Delta}$.
	        \item If $b \in I_{\Gamma}$ and $\gcd(a, b) > 1$, then it must be that $b = x_jy_j$ and $y_j$ divides $\mathbf{y}^{F_i^c}$, i.e.~$j \notin F_i$. This gives rise to the syzygy $x_j\basis_{F_i}$.
	        \item Finally, assume $b = \mathbf{y}^{F_j^c}$ for some $j \neq i$. Then, noting that $F_j^c \setminus F_i^c = F_i \setminus F_j$, we obtain the syzygy $\mathbf{y}^{F_i \setminus F_j}\basis_{F_i} - \mathbf{y}^{F_j \setminus F_i}\basis_{F_j}$ and this ends the proof.
	    \end{itemize}
	\end{proof}
	
	As an application, we determine when $R_{\Delta}$ is quadratic.

	\begin{prop} \label{prop:superlevel}
	    Let $\Delta$ be a pure flag simplicial complex. Then $R_{\Delta}$ is quadratic if and only if $\Delta$ is $(S_2)$.
	\end{prop}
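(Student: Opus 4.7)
The plan is to exploit the explicit presentation of $R_{\Delta}$ given by \Cref{prop:RDelta_presentation}. Since $\Delta$ is flag, every generator listed there is already quadratic except possibly the binomials $b_{F_1, F_2}$, which have degree $1 + |F_1 \setminus F_2|$. Therefore $R_{\Delta}$ is quadratic if and only if, for every pair of facets $F_1, F_2 \in \mathcal{F}(\Delta)$ with $|F_1 \setminus F_2| \geq 2$, the binomial $b_{F_1, F_2}$ lies in the ideal generated by the listed quadratic generators; the task is to translate this algebraic condition into a combinatorial one about $\Delta$.

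First I would reduce to a cleaner setting by killing the $x$-variables: since all the $x$-involving generators in the presentation are already quadratic, $R_{\Delta}$ is quadratic if and only if the quotient $\F[\mathbf{y}, z_F]/((z_{F_1} z_{F_2}) + (b_{F_1, F_2}))$ is. Here each multiple of a generator $z_{F_1} z_{F_2}$ has $z$-degree at least $2$, while each $b_{F_1, F_2}$ is $z$-linear, so by $z$-degree bookkeeping $b_{F_1, F_2}$ lies in the quadratic part of the defining ideal precisely when it already lies in the ideal $\mathcal{B}^{(2)}$ generated by the quadratic $b$-binomials alone. Equipping $\F[\mathbf{y}, z_F]$ with the $\mathbb{Z}^n$-multigrading $\deg(y_i) = e_i$, $\deg(z_F) = \sum_{j \notin F} e_j$, the binomial $b_{F_1, F_2}$ becomes multihomogeneous of multidegree $\sum_{j \notin F_1 \cap F_2} e_j$, and one checks directly that any multigraded application of a quadratic $b$-move sends a monomial of the form $\mathbf{y}^{G \setminus (F_1 \cap F_2)} z_G$ to one of the same form $\mathbf{y}^{G' \setminus (F_1 \cap F_2)} z_{G'}$, where $G'$ is some other facet containing $F_1 \cap F_2$. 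A classical argument on equivalence classes of monomials in a pure-difference binomial quotient, sharpened by these multigraded constraints, then yields: $b_{F_1, F_2} \in \mathcal{B}^{(2)}$ if and only if there is a chain of facets $F_1 = G_0, G_1, \ldots, G_m = F_2$ with $|G_i \setminus G_{i+1}| = 1$ and $G_i \supseteq F_1 \cap F_2$ for every $i$.

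It remains to show that this chain condition (for every pair of facets) is equivalent to $(S_2)$. For the direction $(S_2) \Rightarrow$ quadratic, I would first prove by induction on $|F_1 \setminus F_2|$ that any pure $(S_2)$ complex is strongly connected: one uses that $\lk_{\Delta}(F_1 \cap F_2)$ is connected by $(S_2)$ to turn a vertex path in that link into a chain of facets by extending each consecutive edge to a facet and invoking the inductive hypothesis. Since $(S_2)$ is inherited by links, the same argument applied inside $\lk_{\Delta}(F_1 \cap F_2)$ produces a chain of facets of $\Delta$ containing $F_1 \cap F_2$. For the converse, assume $R_{\Delta}$ is quadratic and let $F$ be a face of codimension at least $2$; given any two facets $\alpha_1, \alpha_2$ of $\lk_{\Delta} F$, either they share a vertex (and thus lie in the same $1$-skeleton component), or the facets $F \cup \alpha_1$ and $F \cup \alpha_2$ of $\Delta$ intersect exactly in $F$, so the chain condition produces a path of facets in $\lk_{\Delta} F$ with consecutive elements sharing a ridge. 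In either case the $1$-skeleton of $\lk_{\Delta} F$ is connected, which is precisely what $(S_2)$ demands. The trickiest step is the combinatorial equivalence at the end of the second paragraph, which requires careful tracking of monomial equivalence classes in a binomial ideal.
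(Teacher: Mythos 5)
Your argument is correct, but it takes a genuinely different route from the paper. The paper's proof is a two-liner: since $\F[\Gamma]$ is quadratic (flagness) and $R_{\Delta}$ is its idealization, quadraticity of $R_{\Delta}$ is equivalent to superlevelness of $\F[\Gamma]$ by \Cref{thm:idealization}.iii; the first $\F[\Gamma]$-syzygies of $\omega_{\F[\Gamma]}$ are then identified (as in the proof of \Cref{prop:RDelta_presentation}) with the first syzygies of the Alexander dual ideal $I_{\Delta^*}$ over $\F[\mathbf{y}]$, and the $r=2$ case of the Terai--Yanagawa theorem (\Cref{thm:ERTY}) converts linearity of those syzygies into $(S_2)$. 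You instead stay entirely at the level of the explicit presentation: after the (valid) reductions obtained by killing the $x$-variables and by $z$-degree bookkeeping, the question becomes whether each long binomial $b_{F_1,F_2}$ lies in the ideal generated by the quadratic ones, and the standard monomial-walk description of pure-difference binomial ideals, constrained by your multigrading, turns this into the existence of a ridge-path of facets through $F_1\cap F_2$; this chain condition is then matched with the combinatorial description of $(S_2)$ (purity plus connectivity of links of codimension at least two) via the classical induction showing that a pure $(S_2)$ complex and all of its links are strongly connected. All of these steps check out (the move computation, the telescoping identity giving the ``if'' direction, the equivalence-class argument for the ``only if'' direction, and both implications of the combinatorial equivalence), so the soft spots you flag are genuine but fillable. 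In effect you reprove by hand the $r=2$ case of Terai--Yanagawa instead of quoting it: your proof is longer but self-contained, elementary, visibly characteristic-free, and produces an explicit certificate (the telescoping expression of each cubic-or-longer binomial in terms of quadratic ones), whereas the paper's proof is shorter and deliberately phrased so that the same mechanism generalizes to all $(S_k)$ in \Cref{thm: s_k is k-linear}, which a purely degree-two membership analysis would not.
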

	\begin{proof}
	    Proceeding as in the proof of \Cref{prop:RDelta_presentation}, one sees that $\omega_{\F[\Gamma]}$ has first linear $\F[\Gamma]$-syzygies if and only if the Alexander dual ideal of $\Delta$ has first linear $\F[\mathbf{y}]$-syzygies. By \Cref{thm:ERTY}, this happens precisely when $\Delta$ is $(S_2)$.
	\end{proof}
	
	\Cref{prop:superlevel} will be vastly generalized in the next section: see \Cref{thm: s_k is k-linear}.
	
	\begin{prop}\label{prop:uni gb}
	The collection $\mathcal{U}$ of monomials and binomials listed in \Cref{prop:RDelta_presentation} is a universal Gr\"obner basis for the ideal defining $R_{\Delta}$ in any characteristic.
    \end{prop}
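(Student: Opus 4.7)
The plan is to verify Buchberger's criterion simultaneously for every monomial order on $S[z_F \mid F \in \mathcal{F}(\Delta)]$. After fixing such an order, the antisymmetry $b_{F_1, F_2} = -b_{F_2, F_1}$ lets me normalize so that the leading term of $b_{F_1, F_2}$ is always $\mathbf{y}^{F_1 \setminus F_2}z_{F_1}$ (with trailing term $\mathbf{y}^{F_2 \setminus F_1}z_{F_2}$). S-polynomials of two monomial generators vanish identically, so only two kinds of pairs remain to be analyzed.

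For a monomial generator $m$ of $\mathcal{U}$ paired with a binomial $b_{F_1, F_2} = u - v$, the S-polynomial equals, up to sign, the single monomial $\tfrac{\mathrm{lcm}(m,u)}{u}\,v$. I would run through the four families of monomial generators of $\mathcal{U}$ and check that this monomial is always divisible by some element of $\mathcal{U}$. This is a short finite case analysis; the only mildly delicate subcase is $m = x_iz_F$ with $F = F_1$, where the outcome depends on whether $i \in F_2$ or not (if $i \notin F_2$ one uses $x_iz_{F_2}$, while $i \in F_2$ forces $i \in F_2 \setminus F_1$ so $y_i$ appears and one uses $x_iy_i$).

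The main step is the S-polynomial of two binomials $b_{F_1, F_2}$ and $b_{F_3, F_4}$. When $F_1 \neq F_3$ a direct expansion produces a difference of two monomials, each divisible by a generator of the form $z_Az_B$ (possibly with $A = B$, which is still in $\mathcal{U}$), so both terms individually reduce to zero. When $F_1 = F_3$, the crucial identity to prove is
\[
\mathrm{spoly}(b_{F_1, F_2}, b_{F_1, F_4}) \;=\; \mathbf{y}^{F_2 \cap F_4 \setminus F_1} \cdot b_{F_2, F_4},
\]
which I would verify via the disjoint decomposition $F_2 \setminus (F_1 \cap F_4) = (F_2 \setminus F_4) \sqcup (F_2 \cap F_4 \setminus F_1)$ together with its companion obtained by swapping $F_2$ and $F_4$. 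Since $b_{F_2, F_4} \in \mathcal{U}$, this expression is a monomial multiple of a basis element and therefore reduces to zero in a single step.

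The main obstacle is isolating the identity above: it encodes a small but pleasant cocycle-like compatibility among the binomials in $\mathcal{U}$, and it is precisely what makes $\mathcal{U}$ stable under Buchberger's procedure regardless of the chosen monomial order. Everything else is a finite case check, and since the generators of $\mathcal{U}$ have already been shown to be characteristic-free in \Cref{prop:RDelta_presentation}, the resulting universal Gröbner basis property is automatically independent of the characteristic of $\F$.
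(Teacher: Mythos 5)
Your proof is correct and follows essentially the same route as the paper: you verify Buchberger's criterion for an arbitrary term order, dismiss the pairs with distinct $z$-variables in the leading terms via the quadratic $z$-monomials, and for two binomials $b_{F,G_1}$, $b_{F,G_2}$ sharing the leading $z$-variable you derive exactly the paper's key identity $S(b_{F,G_1},b_{F,G_2}) = \pm\,\mathbf{y}^{(G_1\cap G_2)\setminus F}\,b_{G_2,G_1}$ through the same disjoint-decomposition of index sets. The only difference is that you spell out the monomial--binomial $S$-pairs explicitly, which the paper treats as routine.
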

	\begin{proof}
	    \Cref{prop:RDelta_presentation} says that the defining ideal of $R_{\Delta}$ admits a generating set $\mathcal{G}$ consisting of monomials and binomials of the form $\mathbf{m}-\mathbf{m'}$. Any Gr\"obner basis obtained by applying Buchberger's algorithm to the generating set $\mathcal{U}$ will have the same property, and will hence be characteristic-independent.
	    
	    It is then enough to show that, for any term order $<$, any possible $S$-polynomial reduces to zero with respect to $\mathcal{U}$. The only interesting situation arises when we consider the $S$-polynomial of two distinct binomials $b$ and $b'$, as all the other generators are monomial. If the leading terms of $b$ and $b'$ do not share the same $z$-variable, then $S(b, b')$ has degree two in the $z$-variables and hence reduces to zero with respect to $\mathcal{U}$.
	    
	    We can hence assume that $b = b_{F, G_1}$, $b' = b_{F, G_2}$, $\init_<(b_{F, G_1}) = \mathbf{y}^{F \setminus G_1}z_F$ and $\init_<(b_{F, G_2}) = \mathbf{y}^{F \setminus G_2}z_F$ (where $F$, $G_1$ and $G_2$ are three distinct facets of $\Delta$).
	    
	    We then get that 	    \begin{align*}
	        S(b, b') &= - \mathbf{y}^{(F \setminus G_2) \setminus (F \setminus G_1)}\mathbf{y}^{G_1 \setminus F}z_{G_1} + \mathbf{y}^{(F \setminus G_1) \setminus (F \setminus G_2)}\mathbf{y}^{G_2 \setminus F}z_{G_2}\\
	        &=- \mathbf{y}^{(F \setminus G_2) \cap G_1}\mathbf{y}^{G_1 \setminus F}z_{G_1} + \mathbf{y}^{(F \setminus G_1) \cap G_2}\mathbf{y}^{G_2 \setminus F}z_{G_2}\\
	        &= \mathbf{y}^{(G_1 \cap G_2) \setminus F}b_{G_2, G_1}
	    \end{align*}
	    and, since $b_{G_2, G_1} \in \mathcal{U}$, we are done.
	\end{proof}
	
\begin{exam}\label{ex: section 4}
    Let $\Delta$ be the pure flag simplicial complex with facet list $\{123, 234, 345\}$. The binomials arising from comparing facets of $\Delta$ are
        \[
            \begin{split}
            b_{123, 234} &= y_1z_{123}-y_4z_{234},\\
            b_{234, 345} &= y_2z_{234}-y_5z_{345},\\
            b_{123, 345} &= y_1y_2z_{123}-y_4y_5z_{345}.
            \end{split}
        \]
    However, the binomial $b_{123, 345}$ is redundant, since it can be written as $y_2b_{123, 234}+y_4b_{234, 345}$. 
    Therefore, $R_{\Delta}$ can be presented as the quotient of $\F[x_1,\dots,x_5,y_1,\dots,y_5,z_{123},z_{234},z_{345}]$ by the ideal
    \begin{align*}
        (& x_1x_4,x_1x_5,x_2x_5,x_1y_1,x_2y_2,x_3y_3,x_4y_4,x_5y_5, z_{123}^2,z_{234}^2,z_{345}^2,\\
        &z_{123}z_{234},z_{123}z_{345},z_{234}z_{345},x_4z_{123}, x_5z_{123}, x_1z_{234}, x_5z_{234}, x_1z_{345}, x_2z_{345},\\
        &y_4z_{234}-y_1z_{123}, y_5z_{345}-y_2z_{234}).
    \end{align*}
    Note this agrees with \Cref{prop:superlevel}: being Cohen--Macaulay, $\Delta$ is also $(S_2)$.
\end{exam}

	
\section{\texorpdfstring{When is $R_{\Delta}$ Koszul?}{When is RDelta Koszul?}}
We now wish to investigate the Koszul property of the $\F$-algebra $R_{\Delta}$. The main result, whose proof is postponed to the end of the section, is the following:
\begin{theorem}\label{thm: s_k is k-linear}
		Let $\Delta$ be a flag $(d-1)$-dimensional simplicial complex and let $1 \leq k\leq d$. $\F$ has a resolution as an $R_{\Delta}$-module which is linear for $k$ steps if and only if $\Delta$ satisfies property $(S_{k})$. Moreover, if $\F$ has an $R_{\Delta}$-resolution which is linear for $d$ steps, then it has a linear $R_{\Delta}$-resolution.
\end{theorem}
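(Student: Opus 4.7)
The plan is to cascade through several equivalences: reduce the question about the $R_\Delta$-resolution of the residue field to one about the $\F[\Gamma]$-resolution of $\omega_{\F[\Gamma]}$, compare this with the $\F[\mathbf{y}]$-resolution of the Alexander dual ideal $I_{\Delta^*}$, and finally invoke \Cref{thm:ERTY}.

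The first reduction should come essentially for free. Since $\Delta$ is flag, the Stanley--Reisner ideal $I_\Gamma = I_\Delta + (x_iy_i : i \in [n])$ of the Bier ball is generated in degree two, so $\F[\Gamma]$ is Koszul (its defining ideal is a squarefree monomial ideal of degree two, hence already a Gr\"obner basis for any term order). By \Cref{prop: Bier dual}.iv, $\F[\Gamma]$ is level with $a(\F[\Gamma]) = d-n$, so $\omega_{\F[\Gamma]}$ is minimally generated in degree $n-d$, the module $M := \omega_{\F[\Gamma]}(n-d-1)$ is generated in degree one, and $R_\Delta = \F[\Gamma] \ltimes M$. Applying \Cref{lem: from omega to F} with $A = \F[\Gamma]$ reduces the statement that $\F$ has an $R_\Delta$-resolution linear for $k$ steps to the statement that $\omega_{\F[\Gamma]}$ has an $\F[\Gamma]$-resolution linear for $k-1$ steps.

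The main task is then to prove that $\omega_{\F[\Gamma]}$ admits an $\F[\Gamma]$-resolution linear for $k-1$ steps if and only if $I_{\Delta^*} \subseteq \F[\mathbf{y}]$ admits an $\F[\mathbf{y}]$-resolution linear for $k-1$ steps. By \Cref{prop: Bier dual}.iv, $\omega_{\F[\Gamma]}$ can be identified with the image of $I_{\Delta^*}$ under $\F[\mathbf{x},\mathbf{y}] \twoheadrightarrow \F[\Gamma]$, so both modules share the same minimal monomial generators; the question is how syzygies behave under this change of ring. This is precisely what the regularity comparison results advertised in the introduction are designed to handle, and I expect this to be the main obstacle of the proof: one must argue that the extra relations coming from $I_\Delta$ and from the ``crossed'' pairs $x_iy_i$ do not spoil linearity in the first $k-1$ steps, while lifting obstructions to linearity over $\F[\mathbf{y}]$ back up to obstructions over $\F[\Gamma]$.

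Once this ring-change comparison is in place, \Cref{thm:ERTY} translates $(k-1)$-step linearity of the $\F[\mathbf{y}]$-resolution of $I_{\Delta^*}$ into the combinatorial condition $(S_k)$ on $\Delta$, closing the main equivalence. For the \emph{moreover} statement, suppose $\F$ admits an $R_\Delta$-resolution linear for $d$ steps. By the main equivalence, $\Delta$ is Cohen--Macaulay over $\F$, and the Eagon--Reiner half of \Cref{thm:ERTY} yields a fully linear $\F[\mathbf{y}]$-resolution of $I_{\Delta^*}$. Pulling this through the comparison gives a fully linear $\F[\Gamma]$-resolution of $\omega_{\F[\Gamma]}$, and the last sentence of \Cref{lem: from omega to F} then forces $R_\Delta$ to be Koszul, so $\F$ has a linear $R_\Delta$-resolution as claimed.
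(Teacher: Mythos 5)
Your outline reproduces the paper's own reduction chain almost verbatim: flagness of $\Delta$ makes $I_\Gamma$ a quadratic monomial ideal, so $\F[\Gamma]$ is Koszul and \Cref{lem: from omega to F} (with $M=\omega_{\F[\Gamma]}(n-d-1)$, generated in degree one by \Cref{prop: Bier dual}.iv) converts $k$-step linearity of the $R_\Delta$-resolution of $\F$ into $(k-1)$-step linearity of the $\F[\Gamma]$-resolution of $\omega_{\F[\Gamma]}$; then one compares with the polynomial-ring resolution of the Alexander dual ideal and closes with \Cref{thm:ERTY}, and your treatment of the ``moreover'' part via the full-linearity versions of these statements is also exactly how the paper concludes. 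So the strategy is the right one, and identical to the paper's.

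The gap is that the middle step, which you explicitly defer to as ``the main obstacle,'' is never argued, and it is the actual mathematical content of the theorem. In the paper this is \Cref{cor:same_reg}.iv--vi, whose proof occupies most of Section 5: the Blue Lemma (\Cref{lemma:blue}) shows that in blue multidegrees the generalized Koszul complex of $\F[\Gamma]$ agrees with the ordinary Koszul complex on $\mathbf{x},\mathbf{y}$, so the multigraded Betti numbers of $J^{\mathcal{M}}$ and $I^{\mathcal{M}}$ coincide there, while the Red Lemma (\Cref{lemma:red}) propagates any Betti number in a red multidegree down one homological step within the same linear strand until a blue degree is reached. Your two sub-claims correspond precisely to these: ``the extra relations do not spoil linearity'' is \Cref{cor:same_reg}.i--ii (Red plus Blue Lemma), and ``lifting obstructions back up'' is \Cref{cor:same_reg}.iv, which is \emph{false} for arbitrary monomial ideals of $\F[\Gamma]$ --- the paper's example with $\mathcal{M}=\{x_1x_2,x_3x_4\}$ has $\reg_{\F[\Gamma]}(J^{\mathcal{M}})<\reg_{\F[\mathbf{x},\mathbf{y}]}(I^{\mathcal{M}})$ --- and holds here only because the Alexander-dual generators involve $y$-variables alone, forcing every multidegree that supports a polynomial-ring Betti number to be blue. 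Your sketch neither isolates this hypothesis nor offers a mechanism for either direction, so as a standalone proof it is incomplete exactly where the paper's technical work lives; if instead you intend to quote \Cref{cor:same_reg} as a black box, the proposal coincides with the paper's proof.
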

\begin{rema}
    The homological behaviour of $R_{\Delta}$ is especially remarkable, since for a general standard graded quadratic $\F$-algebra $Q$ the linearity of the $Q$-resolution of $\F$ might fail at an arbitrarily high homological position \cite{Roos}.
\end{rema}
As a corollary of \Cref{thm: s_k is k-linear} we obtain the following:
\begin{coro}\label{cor: koszul_iff_cm}
		Let $\Delta$ be a flag simplicial complex. Then $R_{\Delta}$ is Koszul over $\F$ if and only if $\Delta$ is Cohen--Macaulay over $\F$.
	\end{coro}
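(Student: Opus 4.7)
The plan is to derive this corollary as a short consequence of \Cref{thm: s_k is k-linear}, with no extra machinery needed. Write $d$ for the Krull dimension of $\F[\Delta]$, so that $\dim \Delta = d-1$. By \Cref{def: Serre}, $\Delta$ being Cohen--Macaulay over $\F$ is by definition the condition $(S_d)$ over $\F$. On the algebraic side, by the very definition of a Koszul algebra, $R_\Delta$ is Koszul precisely when the residue field $\F$ has a linear $R_\Delta$-resolution, i.e.\ an $R_\Delta$-resolution that is linear at every homological step.

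Assume first that $\Delta$ is Cohen--Macaulay over $\F$, i.e.\ satisfies $(S_d)$. Applying the forward direction of \Cref{thm: s_k is k-linear} with $k=d$ yields an $R_\Delta$-resolution of $\F$ which is linear for $d$ steps. The ``moreover'' clause of that theorem then upgrades this to linearity at every step, so $R_\Delta$ is Koszul.

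Conversely, assume $R_\Delta$ is Koszul. Then the $R_\Delta$-resolution of $\F$ is linear at every step, in particular for $d$ steps. Applying the backward direction of \Cref{thm: s_k is k-linear} with $k=d$ yields $(S_d)$, that is, $\Delta$ is Cohen--Macaulay over $\F$.

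There is genuinely no obstacle here beyond packaging \Cref{thm: s_k is k-linear}: the entire content of the corollary is encoded in that theorem, and specifically in its ``moreover'' clause, which says that the linearity of the resolution of $\F$ cannot stop after exactly $d$ steps. The only mild bookkeeping point is to make sure the flagness and pure $(d-1)$-dimensional hypothesis under which \Cref{thm: s_k is k-linear} is proved (cf.\ \Cref{notat:main}) is inherited by the hypothesis of the corollary, which is the case.
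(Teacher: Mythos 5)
Your proposal is correct and matches the paper's intent exactly: the paper states \Cref{cor: koszul_iff_cm} as an immediate consequence of \Cref{thm: s_k is k-linear}, obtained precisely by taking $k=d$ and invoking the ``moreover'' clause to upgrade $d$-step linearity to full linearity, with the converse being the trivial direction. Your remark on purity is also handled the same way, since purity is part of the running hypotheses in \Cref{notat:main} under which $R_{\Delta}$ is defined.
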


\subsection{The generalized Koszul complex for a quadratic Stanley--Reisner ring}

	Let $\Sigma$ be a flag simplicial complex and consider its Stanley--Reisner ring $\F[\Sigma] = \F[z_i \mid \{i\} \in \Sigma]/I_{\Sigma}$, which is a Koszul algebra. It was already known to Fr\"oberg \cite{Fr} how to explicitly describe a minimal $\F[\Sigma]$-resolution of the residue field $\F$: we will briefly describe such a \emph{generalized Koszul complex} here, following the treatment in \cite[Section 8]{MP}.
	
	The Koszul algebra $\F[\Sigma]$ admits a \emph{Koszul dual algebra} $\F[\Sigma]^{!}$, which is obtained by quotienting the noncommutative polynomial ring $\F\langle Z_i \mid \{i\} \in \Sigma\rangle$ by the relations 
	
	\begin{equation} \label{eq:koszul_dual}
	\begin{split}
	Z_i^2 &\text{ for every vertex }i \text{ of } \Sigma\\ 
	Z_iZ_j + Z_jZ_i &\text{ for every edge }\{i,j\} \text{ of }\Sigma.
	\end{split}
	\end{equation}
	
	In particular, given a noncommutative monomial in the $Z$-variables, two consecutive distinct variables $v$ and $v'$ can anticommute unless $vv' \in I_{\Sigma}$.
	
	\begin{nota}
	In what follows we will think of noncommutative monomials $\mathbf{w}$ in the $Z$-variables as \emph{words}, and will denote by $[\mathbf{w}]$ the equivalence class of $\mathbf{w}$ with respect to the relations \eqref{eq:koszul_dual} (sometimes writing $[-\mathbf{w}]$ to denote $-[\mathbf{w}]$). We will write $\mathbf{w} \| v$ for the word obtained by adding the letter $v$ at the end of the word $\mathbf{w}$. Finally, for any nonzero $[\mathbf{w}]$, the support $\mathrm{supp}([\mathbf{w}])$ will be the signless commutative monomial obtained by multiplying together the $z$-variables corresponding to the letters of any representative of $[\mathbf{w}]$.
	\end{nota}
	
	\begin{defi}
    The generalized Koszul complex $(\mathbb{GK}_{\bullet}(\F[\Sigma]), \partial)$ is the chain complex of free $\F[\Sigma]$-modules given by the following data:
	\begin{itemize}
	    \item $\mathbb{GK}_j(\F[\Sigma]) = \F[\Sigma] \otimes_{\F} \F[\Sigma]^!_j$;
	    \item if $j > 0$ and $\mathbf{w} = Z_{i_1}Z_{i_2}\ldots Z_{i_j}$ is a $j$-letter word, the differential $\partial$ is given by 
	    \begin{equation} \label{eq:GK_diff}
	    \partial(1 \otimes [\mathbf{w}]) = \sum_{k \in \mathrm{head}({\mathbf{w}})}(-1)^{k-1}z_{i_k} \otimes [\mathbf{w} \setminus \{Z_{i_k}\}],
	    \end{equation}
	    where $\mathbf{w} \setminus \{Z_{i_k}\}$ is the $(j-1)$-letter word obtained by erasing the letter $Z_{i_k}$ from $\mathbf{w}$ and $\mathrm{head}(\mathbf{w})$ is the set of those indices $k$ for which there exists a representative of $[\mathbf{w}]$ with $Z_{i_k}$ as its first letter. One can check that such a differential is indeed well-defined.
	\end{itemize}
	\end{defi}
	
	\begin{exam}
	    Let $\Sigma = \{12, 23, 1, 2, 3, \varnothing\}$.
	    
	    Then an $\F$-basis for $\F[\Sigma]^{!}_2$ is $\{[Z_1Z_2], [Z_1Z_3], [Z_2Z_3], [Z_3Z_1]\}$, whereas an $\F$-basis for $\F[\Sigma]^{!}_3$ is $\{[Z_1Z_2Z_3], [Z_1Z_3Z_1], [Z_2Z_3Z_1], [Z_3Z_1Z_3]\}$.
	    
	    Note that, for instance, $[Z_1^2] = 0$ and $[Z_2Z_3Z_1] = -[Z_3Z_2Z_1] = [Z_3Z_1Z_2]$, but $[Z_2Z_3Z_1] \neq [Z_2Z_1Z_3]$. If $\mathbf{w}=Z_2Z_3Z_1$, we have that $\mathrm{head}({\mathbf{w}})=\{1,2\}$ and \[\partial(1\otimes[Z_2Z_3Z_1])= z_2 \otimes [Z_3Z_1] - z_3 \otimes [Z_2Z_1] = z_2 \otimes [Z_3Z_1] + z_3 \otimes [Z_1Z_2].\]
	    
	    The matrix describing the map $\mathbb{GK}_3(\F[\Sigma]) \xrightarrow{\partial} \mathbb{GK}_2(\F[\Sigma])$ in the proposed basis is
	    \[
	    \bordermatrix
	    {   ~ & 1 \otimes [Z_1Z_2Z_3] & 1 \otimes [Z_1Z_3Z_1] & 1 \otimes [Z_2Z_3Z_1] & 1 \otimes [Z_3Z_1Z_3]\cr
	        1 \otimes [Z_1Z_2] & 0 & 0 & z_3 & 0\cr
	        1 \otimes [Z_1Z_3] & -z_2 & 0 & 0 & z_3\cr
	        1 \otimes [Z_2Z_3] & z_1 & 0 & 0 & 0\cr
	        1 \otimes [Z_3Z_1] & 0 & z_1 & z_2 & 0\cr}.
	    \]
	\end{exam}

\subsection{Homological properties of Stanley--Reisner rings of flag Bier balls}
    
    For the rest of this section we fix a pure flag simplicial complex $\Delta$ and the associated Bier ball $\Gamma$.
    
	In the following we will be interested in $\mathbb{N}^{2n}$-graded objects; we will reserve the word ``multidegree'' to denote either a vector in $\mathbb{N}^{2n}$ or the associated monomial in $x_1, \ldots, x_n, y_1, \ldots, y_n$.
	
	When working with multidegrees, we shall distinguish between two types of variables.
	
	\begin{defi}
		Let $\mathbf{m}$ be a multidegree. A variable $v|\mathbf{m}$ is \emph{red} with respect to $\mathbf{m}$ (and $\Gamma$) if there exists $v'|\mathbf{m}$ such that $vv'\in I_{\Gamma}$. Observe that necessarily $v\neq v'$. A variable $v|\mathbf{m}$ which is not red is called \emph{blue}. We say that a multidegree is blue if all the variables in its support are blue, and red otherwise.
	\end{defi}
	
	\begin{rema}
	By definition, blue multidegrees correspond bijectively to monomials of $\F[\Gamma]$. In particular, any $\mathbb{Z}^{2n}$-graded ideal of $\F[\Gamma]$ is generated by a collection of blue monomials. \end{rema}
	
	We now establish two technical lemmas about the homological behaviour of $\mathbb{Z}^{2n}$-graded ideals in $\F[\mathbf{x}, \mathbf{y}]$ and $\F[\Gamma]$: these results will be crucial for the proof of \Cref{thm: s_k is k-linear}.
	
	\begin{lem}[Blue Lemma] \label{lemma:blue}
Let $\mathbf{m}$ be a blue multidegree, let $\mathcal{M}$ be a collection of blue monomials, and denote by $I^{\mathcal{M}}$ (respectively, $J^{\mathcal{M}}$) the $\mathbb{Z}^{2n}$-graded ideal of $\F[\mathbf{x}, \mathbf{y}]$ (respectively, $\mathbb{F}[\Gamma]$) generated by $\mathcal{M}$. Then:
    \begin{enumerate}
        \item if $\mathbf{n}$ divides $\mathbf{m}$, then $\mathbf{n}$ is also a blue multidegree;
        \item the $\F$-vector spaces $I^{\mathcal{M}}_{\mathbf{m}}$ and $J^{\mathcal{M}}_{\mathbf{m}}$ are either both one-dimensional or both $\{0\}$;
        \item $\mathbb{GK}_{\bullet}(\F[\Gamma])_{\mathbf{m}}= \mathbb{K}_{\bullet}(\mathbf{x}, \mathbf{y})_{\mathbf{m}}$, where $\mathbb{K}_{\bullet}(\mathbf{x}, \mathbf{y})$ is the usual Koszul complex on the variables $\mathbf{x}$ and $\mathbf{y}$;
        \item $\beta^{\mathbb{F}[\Gamma]}_{i,\mathbf{m}}(J^{\mathcal{M}}) = \beta^{\F[\mathbf{x}, \mathbf{y}]}_{i, \mathbf{m}}(I^{\mathcal{M}})$ for every $i \in \mathbb{N}$.
    \end{enumerate}
\end{lem}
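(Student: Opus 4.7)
The plan is to prove the four parts in order: (1) and (2) are quick combinatorial checks, (3) is the crucial structural statement, and (4) will follow immediately from (2) and (3).

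Part (1) is immediate from the definitions: if some variable $v \mid \mathbf{n}$ were red with respect to $\mathbf{n}$, there would exist $v' \mid \mathbf{n}$ with $vv' \in I_{\Gamma}$, and since $\mathbf{n} \mid \mathbf{m}$ both $v, v' \mid \mathbf{m}$, contradicting the blueness of $\mathbf{m}$. For (2), I would use that $\Delta$, and hence $\Gamma$, is flag, so the minimal generators of $I_{\Gamma}$ all have degree two. This makes blueness of a multidegree equivalent to its corresponding monomial lying outside $I_{\Gamma}$; in particular $(I_{\Gamma})_{\mathbf{m}} = 0$, which yields $J^{\mathcal{M}}_{\mathbf{m}} \cong I^{\mathcal{M}}_{\mathbf{m}}$. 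Since both ideals are monomial, this common space is a subspace of the one-dimensional space $S_{\mathbf{m}}$, nonzero exactly when some $m_i \in \mathcal{M}$ divides $\mathbf{m}$.

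The heart of the argument is (3), where I would exhibit an explicit identification of the two $\F$-chain complexes in multidegree $\mathbf{m}$. The multidegree-$\mathbf{m}$ piece of $\mathbb{K}_j(\mathbf{x}, \mathbf{y})$ has a basis indexed by size-$j$ squarefree divisors $\sigma$ of $\mathbf{m}$, while the multidegree-$\mathbf{m}$ piece of $\mathbb{GK}_j(\F[\Gamma])$ has a basis indexed by nonzero classes $[\mathbf{w}] \in \F[\Gamma]^!_j$ with $\mathrm{supp}([\mathbf{w}]) \mid \mathbf{m}$. Blueness of $\mathbf{m}$ guarantees that any two distinct variables in $\mathrm{supp}(\mathbf{m})$ anticommute in $\F[\Gamma]^!$ and that squares vanish, so for each such $\sigma$ there is exactly one class $[\mathbf{w}_{\sigma}]$, up to sign. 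Fixing any total order on $\mathrm{supp}(\mathbf{m})$ then gives a consistent basis identification. For the differentials, the key observation is that $\mathrm{head}(\mathbf{w}) = \{1, \ldots, j\}$ whenever $\mathrm{supp}(\mathbf{w})$ is blue, because any letter can be transposed to the first slot by anticommuting through its neighbours. Hence \eqref{eq:GK_diff} reduces to the ordinary Koszul differential.

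Part (4) then follows by computing Tor using the two free resolutions of $\F$: tensor the Koszul complex with $I^{\mathcal{M}}$ over $S$, the generalized Koszul complex with $J^{\mathcal{M}}$ over $\F[\Gamma]$, and take multidegree-$\mathbf{m}$ components. The first decomposes as $\bigoplus_{\sigma} I^{\mathcal{M}}_{\mathbf{m}/\sigma} \otimes e_{\sigma}$ summed over size-$j$ squarefree divisors; each $\mathbf{m}/\sigma$ is blue by (1), so (2) gives $\dim I^{\mathcal{M}}_{\mathbf{m}/\sigma} = \dim J^{\mathcal{M}}_{\mathbf{m}/\sigma}$, and (3) matches the $e_{\sigma}$ with the corresponding classes $[\mathbf{w}_{\sigma}]$ in the other tensored complex, along with compatible differentials. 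Taking homology yields the equality of multigraded Betti numbers. The main technical hurdle is the sign bookkeeping in (3); I expect this to be essentially formal, since blueness turns off precisely the commuting-variable relations that are the only feature distinguishing $\F[\Gamma]^!$ from an exterior algebra on $\mathrm{supp}(\mathbf{m})$.
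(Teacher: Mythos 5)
Your proposal is correct and takes essentially the same approach as the paper's proof: parts (i)--(ii) by direct inspection (with the use of flagness of $\Gamma$ made explicit, which the paper leaves implicit since $\Delta$ is flag throughout the section), part (iii) by matching squarefree blue supports with the unique-up-to-sign classes in $\F[\Gamma]^!$ and observing that $\mathrm{head}(\mathbf{w})=\{1,\ldots,j\}$, and part (iv) by identifying the multidegree-$\mathbf{m}$ strands of $I^{\mathcal{M}}\otimes\mathbb{K}_{\bullet}(\mathbf{x},\mathbf{y})$ and $J^{\mathcal{M}}\otimes\mathbb{GK}_{\bullet}(\F[\Gamma])$ via (i)--(iii) and taking homology. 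No substantive deviation or gap.
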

    \begin{proof}
    \begin{enumerate}
        \item This is a direct consequence of the definition of blue multidegree.
        
        \item Both $J^{\mathcal{M}}_{\mathbf{m}}$ and $I^{\mathcal{M}}_{\mathbf{m}}$ can be at most one-dimensional; if they differ, then it must be that $\mathbf{m} \in I_{\Gamma}$, but this cannot happen precisely because $\mathbf{m}$ is blue.
        
        \item Fix $j \in \mathbb{N}$ and consider \[\mathbb{GK}_{j}(\F[\Gamma])_{\mathbf{m}} = (\F[\Gamma] \otimes \F[\Gamma]^{!}_{j})_{\mathbf{m}} = \bigoplus_{\substack{\mathbf{n} \mid \mathbf{m}\\ |\mathbf{n}|=j}}{\F[\Gamma]}_{\frac{\mathbf{m}}{\mathbf{n}}} \otimes \F[\Gamma]^{!}_{\mathbf{n}}.\]
        
Pick a multidegree $\mathbf{n}$ dividing $\mathbf{m}$ and such that $|\mathbf{n}|=j$; by part i, $\mathbf{n}$ is blue. For any word $\mathbf{w}$ with support $\mathbf{n}$, one has that $[\mathbf{w}] \neq 0$ if and only if $\mathbf{n}$ is squarefree: otherwise, since all letters can anticommute, we would be able to put next to each other two occurrences of the same letter, causing the whole class to vanish. We can hence assume that $\mathbf{n}$ is squarefree; in particular, since $x_hy_h \in I_{\Gamma}$ for every $1 \leq h \leq n$, we can write $\mathbf{n}$ as $z_{i_1}z_{i_2}\ldots z_{i_j}$, where $i_1 < i_2 < \ldots < i_j$ and each $z_{i_k}$ is either $x_{i_k}$ or $y_{i_k}$. All words $\mathbf{w}$ with support $\mathbf{n}$ belong to the same class (up to a global sign), since the order of the letters does not really matter in this case: we will denote such class by $z_{i_1}z_{i_2}\ldots z_{i_j}$. Such a characterization proves that $\mathbb{GK}_{j}(\F[\Gamma])_{\mathbf{m}}$ coincides with $\mathbb{K}_{j}(\mathbf{x}, \mathbf{y})_{\mathbf{m}}$. As for the differential, since $\mathrm{head}(z_{i_1}z_{i_2}\ldots z_{i_j}) = \{1, 2, \ldots, j\}$, we get \[\partial\left(\frac{\mathbf{m}}{\mathbf{n}} \otimes \mathbf{1}_{z_{i_1}z_{i_2}\ldots z_{i_j}}\right) = \sum_{k=1}^{j}(-1)^{k-1}\frac{z_{i_k}\mathbf{m}}{\mathbf{n}} \otimes \mathbf{1}_{z_{i_1}z_{i_2}\ldots \hat{z}_{i_k}\ldots z_{i_j}},\]
        i.e.~the differential of the (degree $\mathbf{m}$ part of the) usual Koszul complex on the variables $\mathbf{x}$ and $\mathbf{y}$.
        
        \item It is enough to show that the chain complexes of $\F$-vector spaces $(J^{\mathcal{M}} \otimes \mathbb{GK}_{\bullet}(\F[\Gamma]))_{\mathbf{m}}$ and $(I^{\mathcal{M}} \otimes \mathbb{K}_{\bullet}(\mathbf{x}, \mathbf{y}))_{\mathbf{m}}$ coincide.
        
        For every $j \in \mathbb{N}$ one has that \[(J^{\mathcal{M}} \otimes \mathbb{GK}_{j}(\F[\Gamma]))_{\mathbf{m}} = \bigoplus_{\mathbf{n} \mid \mathbf{m}}J^{\mathcal{M}}_{\frac{\mathbf{m}}{\mathbf{n}}} \otimes (\mathbb{GK}_{j}(\F[\Gamma]))_{\mathbf{n}};\] by part i, both $\mathbf{n}$ and $\frac{\mathbf{m}}{\mathbf{n}}$ are blue multidegrees. By parts ii and iii, one then has that $J^{\mathcal{M}}_{\frac{\mathbf{m}}{\mathbf{n}}} = I^{\mathcal{M}}_{\frac{\mathbf{m}}{\mathbf{n}}}$ and $\mathbb{GK}_{j}(\F[\Gamma])_{\mathbf{n}}= \mathbb{K}_{j}(\mathbf{x}, \mathbf{y})_{\mathbf{n}}$. Analyzing the differential maps as in the proof of part iii yields the claim.
    \end{enumerate}
    \end{proof}
	
	\begin{lem}[Red Lemma] \label{lemma:red}
	Let $J$ be a $\mathbb{Z}^{2n}$-graded ideal of $\mathbb{F}[\Gamma]$, let $\mathbf{m}$ be a red multidegree and assume that $\beta_{i,\mathbf{m}}(J) \neq 0$ for some $i > 0$. Then there exists a red variable $v$ such that $\beta_{i-1, \frac{\mathbf{m}}{v}}(J) \neq 0$.
	\end{lem}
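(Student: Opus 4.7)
The Betti numbers in question are computed from the generalized Koszul complex: $\beta_{i,\mathbf{m}}(J) = \dim_{\F} H_i((J \otimes_{\F[\Gamma]} \mathbb{GK}_\bullet(\F[\Gamma]))_\mathbf{m})$. A basis element in multidegree $\mathbf{m}$ and homological degree $i$ is $\alpha \otimes [\mathbf{w}]$, where $\alpha$ is a blue monomial in $J$, $[\mathbf{w}]$ is a nonzero basis class in $\F[\Gamma]^!_i$, and $\alpha \cdot \mathrm{supp}([\mathbf{w}]) = \mathbf{m}$. Since $\mathbf{m}$ is red, there is a pair of variables $v, v' \mid \mathbf{m}$ with $vv' \in I_\Gamma$; because $\alpha$ must be blue, it cannot contain both $v$ and $v'$, so every basis element contributing in $\mathbf{m}$ must have at least one of the letters $V, V'$ appearing in $[\mathbf{w}]$.

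Given a red variable $v$ in $\mathbf{m}$, I would set up a short exact sequence of chain complexes $0 \to D_\bullet \to C_\bullet \to C_\bullet/D_\bullet \to 0$, where $C_\bullet := (J \otimes_{\F[\Gamma]} \mathbb{GK}_\bullet(\F[\Gamma]))_\mathbf{m}$ and $D_\bullet$ is the subcomplex spanned by basis elements $\alpha \otimes [\mathbf{w}]$ with $V \notin \mathbf{w}$ (a genuine subcomplex because the differential only removes letters from $\mathbf{w}$, never introducing $V$). Each element of $D_\bullet$ must have $v \mid \alpha$, so factoring out $v$ identifies $D_\bullet$ with a piece of $((J:v) \otimes \mathbb{GK}_\bullet)_{\mathbf{m}/v}$. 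For the quotient, I plan to construct a degree $-1$ chain map $\Phi \colon C_\bullet/D_\bullet \to (J \otimes \mathbb{GK}_{\bullet - 1})_{\mathbf{m}/v}$ sending $\alpha \otimes [\mathbf{w}]$ to $\alpha \otimes [\mathbf{w} \setminus \{V\}]$ with an appropriate sign depending on the position of $V$ in a chosen representative of $[\mathbf{w}]$.

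From the associated long exact sequence, nonvanishing of $H_i(C_\bullet)$ implies nonvanishing of $H_i(D_\bullet)$ or $H_i(C_\bullet/D_\bullet)$. In the latter case, $\Phi$ produces directly a nontrivial class in $H_{i-1}$ at multidegree $\mathbf{m}/v$, yielding the desired $\beta_{i-1, \mathbf{m}/v}(J) \neq 0$. In the former case, one must further exploit the relationship between $J$ and $J : v$ to descend to a Betti number of $J$ in the desired homological degree and multidegree; a natural tool is the short exact sequence $0 \to J \to J : v \to (J : v)/J \to 0$, which gives a long exact sequence of Tor groups linking Betti numbers of $J$ and of $J : v$. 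If needed, one iterates over both elements of the red pair $\{v, v'\}$, using the symmetry of the situation.

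The hard part is likely the construction of $\Phi$ and the verification that it is a chain map (up to sign). A class $[\mathbf{w}]$ may have the letter $V$ ``trapped'' behind another letter $V''$ whose corresponding variable $v''$ is a further partner of $v$ (so $vv'' \in I_\Gamma$); in such words, $V$ cannot be anticommuted to the head, so new terms appear in $\partial \Phi - \Phi \bar\partial$ corresponding to the letters of $[\mathbf{w}]$ that $V$ blocks. When this happens, the strategy would be to adaptively switch the reduction variable from $v$ to $v''$, exploiting the fact that $\mathbf{m}$ remains red with respect to $v''$. Careful bookkeeping of the signs arising from anticommutations in $\F[\Gamma]^!$ will be essential throughout, and the argument may require iterating across the full collection of red pairs dividing $\mathbf{m}$ before the reduction succeeds.
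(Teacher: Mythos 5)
Your framework (a short exact sequence of complexes plus a degree $-1$ comparison map $\Phi$) has two gaps that I do not see how to close as written. First, $\Phi$ is not a well-defined chain map, and even where it makes sense it is not usable in the way you claim. A class $[\mathbf{w}]$ may contain several occurrences of the letter $V$, so ``delete $V$'' is ambiguous; deleting $V$ can annihilate a nonzero class: if $wv \in I_{\Gamma}$ then $[WVW] \neq 0$ in $\F[\Gamma]^{!}$ (compare $[Z_1Z_3Z_1]$ in the paper's example) while $[WW] = 0$, and such terms do occur because multidegrees in $\mathbb{N}^{2n}$ need not be squarefree; and, as you yourself note, when $V$ is blocked by a letter it does not anticommute with, $\partial\Phi - \Phi\partial$ acquires extra terms, so $\Phi$ fails to be a chain map precisely in the situations the lemma is about. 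The proposed remedy --- ``adaptively switch the reduction variable to $v''$'' --- is not an argument: different terms of the same cycle can be blocked by different red partners, no procedure is specified, and nothing guarantees consistency or termination. Finally, even granting a chain map, your conclusion in the quotient branch needs $\Phi$ to be injective on homology (or a quasi-isomorphism); you never address this, and injectivity already fails at the chain level by the example above.

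Second, the branch $H_i(D_\bullet) \neq 0$ is left genuinely open. By your own identification, $D_\bullet$ computes (only part of) $\Tor_i^{\F[\Gamma]}(J{:}v, \F)$ in multidegree $\mathbf{m}/v$ --- the same homological index $i$, not $i-1$; moreover it is merely ``a piece'' of $((J{:}v) \otimes \mathbb{GK}_\bullet)_{\mathbf{m}/v}$, since words there may still contain $V$ when $v^2 \mid \mathbf{m}$. The long exact sequence of $0 \to J \to J{:}v \to (J{:}v)/J \to 0$ relates $\Tor_i(J)$, $\Tor_i(J{:}v)$ and $\Tor_i((J{:}v)/J)$, but nothing in it forces the required nonvanishing of $\Tor_{i-1}(J)_{\mathbf{m}/v}$, and ``iterate over the red pairs'' has no induction scheme behind it. For contrast, the paper avoids all homological machinery and manipulates an explicit non-boundary cycle: since blue letters anticommute with everything, some red letter $v$ can be brought to the last position in part of the cycle; that part is itself a cycle (when $v$ tries to exit at the front it meets its partner $\bar{v}$, which divides the coefficient monomial, and the term dies in $J$), iterating over red letters isolates a non-boundary cycle all of whose words end in the same $v$, and deleting that final letter yields a non-boundary cycle in homological degree $i-1$ and multidegree $\mathbf{m}/v$. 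To salvage your route you would need, at a minimum, a precise definition of $\Phi$ on classes, a proof that it is a chain map (or a filtration/spectral sequence replacing it), and an actual mechanism converting information about $J{:}v$ in degree $i$ into information about $J$ in degree $i-1$.
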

	\begin{proof}
	Let us start by considering a cycle $z$ which has homological degree $i$, internal multidegree $\mathbf{m}$ and is not a boundary. Since $\Tor_i(J, \F)_{\mathbf{m}} = H_i(J \otimes \mathbb{GK}_{\bullet}(\F[\Gamma]))_{\mathbf{m}},$ such a cycle can be written as 
	\begin{equation} \label{eq: red_lemma_cycle}
	z = \sum_{j \in \mathcal{C}} \lambda_jn_j \otimes [\mathbf{w}_j],
	\end{equation}
	where for each $j$ in the finite set $\mathcal{C}$ one has that $\lambda_j \in \F \setminus \{0\}$, $n_j$ is a monomial in $J$, $[\mathbf{w}_j]$ is the nonzero class of a word $\mathbf{w}_j$ with $i$ letters, and $n_j \cdot \mathrm{supp}([\mathbf{w}_j]) = \mathbf{m}$. Note that each word $\mathbf{w}_j$ in \eqref{eq: red_lemma_cycle} must contain at least one red letter $v$. If this were not the case for some $\mathbf{w}_j$, then any two red variables $v$ and $\bar{v}$ such that $v\bar{v} \in I_{\Gamma}$ would both divide $n_j$, causing the whole term to vanish.
	
	Since blue letters anticommute with every other letter, there must be at least a red letter $v_1$ that appears as the last letter of a representative of some $[\mathbf{w}_j]$, up to sign. Let us fix such a $v_1$. By possibly tweaking the sign of some $\lambda_j$, we can assume without loss of generality that \begin{equation}z = \sum_{j \in \mathcal{A}}\lambda_j n_j \otimes [\tilde{\mathbf{w}}_j \| v_1] + \sum_{j \in \mathcal{B}} \lambda_jn_j \otimes [\mathbf{w}_j] =: z_{v_1} + z',\end{equation}
	where each $\tilde{\mathbf{w}}_j$ has $i-1$ letters, $\mathcal{A} \sqcup \mathcal{B} = \mathcal{C}$, and the nonempty set $\mathcal{A}$ contains all the indices for which $v_1$ can travel to the end of the associated word.
	
	\textbf{Claim:} $z_{v_1}$ is a cycle (and hence so is $z'$).
	
	To see this, note that all the basis elements appearing in $\partial (1 \otimes [\tilde{\mathbf{w}}_j \| v_1])$ must end in $v_1$, except possibly for those words $\tilde{\mathbf{w}}_j \| v_1$ where $v_1$ is free to navigate to the front. However, this last instance can happen only if the support of $\tilde{\mathbf{w}}_j$ contains no variable $\bar{v}_1$ with $v_1\bar{v}_1\in I_{\Gamma}$; then, since $\bar{v}_1$ divides $\mathbf{m}$, it must be that $\bar{v}_1$ divides $n_j$. But then, when $v_1$ exits the word, it multiplies $\bar{v}_1$ and vanishes. Hence, $\partial z_{v_1}$ can be written as a combination of words ending in $v_1$. On the other hand, $\partial{z'}$ can contain no such words. But then, since $\partial z = 0$, it must be that $\partial z_{v_1} = \partial z' = 0$. Note that both of these new cycles have again homological degree $i$ and internal multidegree $\mathbf{m}$. If $z_{v_1}$ is not a boundary, we keep it and we stop. If instead $z_{v_1}$ is a boundary, then $z'$ cannot be so (otherwise the original $z$ would also be a boundary) and we can iterate the same procedure as before, finding a new red variable $v_2 \neq v_1$ and writing $z'=z'_{v_2} + z''.$ This process can be repeated only finitely many times and will yield the desired non-boundary cycle.
	
	From now on we can hence assume without loss of generality that \begin{equation}z = \sum_{j \in \mathcal{A}}\lambda_j n_j \otimes [\tilde{\mathbf{w}}_j \| v].\end{equation}
	
	Now let \[\hat{z} := \sum_{j \in \mathcal{A}}\lambda_j n_j \otimes [\tilde{\mathbf{w}}_j].\]
	Reasoning as in the proof of the claim above, it follows that $\hat{z}$ is a cycle; moreover, if there existed $\hat{y}$ such that $\partial \hat{y} = \hat{z}$, then $z$ would also be a boundary (appending $v$ at the end of each word in $\hat{y}$). By definition, the cycle $\hat{z}$ has homological degree $i-1$ and internal multidegree $\frac{\mathbf{m}}{v}$, and this concludes the proof.
	\end{proof}

    \begin{coro} \label{cor:same_reg}
    Let $\mathcal{M}$ be a collection of blue monomials, and denote by $I^{\mathcal{M}}$ (respectively, $J^{\mathcal{M}}$) the $\mathbb{Z}^{2n}$-graded ideal of $\F[\mathbf{x}, \mathbf{y}]$ (respectively, $\mathbb{F}[\Gamma]$) generated by $\mathcal{M}$. Let $k$ and $h$ be two nonnegative integers. Then:
    \begin{enumerate}
        \item for every $i$, $j$ for which $\beta_{i,j}^{\F[\Gamma]}(J^{\mathcal{M}}) \neq 0$ there exists $0 \leq h \leq i$ for which $\beta_{i-h,j-h}^{\F[\mathbf{x}, \mathbf{y}]}(I^{\mathcal{M}}) \neq 0$; 
        
        \item if $\beta^{\F[\mathbf{x}, \mathbf{y}]}_{i,j}(I^{\mathcal{M}}) = 0$ for every $0 \leq i \leq k$ and $j > i+h$, then $\beta^{\F[\Gamma]}_{i,j}(J^{\mathcal{M}}) = 0$ for every $0 \leq i \leq k$ and $j > i+h$; in particular, if the $\F[\mathbf{x}, \mathbf{y}]$-resolution of $I^{\mathcal{M}}$ is linear for $k$ steps, so is the $\F[\Gamma]$-resolution of $J^{\mathcal{M}}$; 
        
        \item $\reg_{\mathbb{F}[\Gamma]}(J^{\mathcal{M}}) \leq \reg_{\F[\mathbf{x}, \mathbf{y}]}(I^{\mathcal{M}}).$
        \end{enumerate}
        
        If moreover all the monomials in $\mathcal{M}$ contain only $y$-variables, then:
        \begin{enumerate}
        \setcounter{enumi}{3}
        \item if $\beta^{\F[\mathbf{x}, \mathbf{y}]}_{i,j}(I^{\mathcal{M}}) \neq 0$, then $\beta_{i,j}^{\F[\Gamma]}(J^{\mathcal{M}}) \neq 0$;
        
        \item $\beta^{\F[\mathbf{x}, \mathbf{y}]}_{i,j}(I^{\mathcal{M}}) = 0$ for every $0 \leq i \leq k$ and $j > i+h$ if and only if $\beta^{\F[\Gamma]}_{i,j}(J^{\mathcal{M}}) = 0$ for every $0 \leq i \leq k$ and $j > i+h$;
        
        \item $\reg_{\mathbb{F}[\Gamma]}(J^{\mathcal{M}}) = \reg_{\F[\mathbf{x}, \mathbf{y}]}(I^{\mathcal{M}}).$ In particular, $J^{\mathcal{M}}$ has a linear $\F[\Gamma]$-resolution if and only if $I^{\mathcal{M}}$ has a linear $\F[\mathbf{x}, \mathbf{y}]$-resolution.
    \end{enumerate}
    \end{coro}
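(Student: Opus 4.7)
The plan is to leverage \Cref{lemma:blue} and \Cref{lemma:red} to reduce every $\F[\Gamma]$-Betti number calculation to an analogous calculation in $\F[\mathbf{x},\mathbf{y}]$. Throughout I would work with the $\mathbb{Z}^{2n}$-grading inherited from the fact that $\mathcal{M}$ is monomial, so that $\beta^{\F[\Gamma]}_{i,j}(J^{\mathcal{M}})=\sum_{|\mathbf{m}|=j}\beta^{\F[\Gamma]}_{i,\mathbf{m}}(J^{\mathcal{M}})$ and similarly for $I^{\mathcal{M}}$.

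To prove (i), I would fix a multidegree $\mathbf{m}$ with $|\mathbf{m}|=j$ and $\beta^{\F[\Gamma]}_{i,\mathbf{m}}(J^{\mathcal{M}})\neq 0$. When $\mathbf{m}$ is blue, \Cref{lemma:blue}.iv immediately yields the claim with $h=0$. When $\mathbf{m}$ is red, I would apply \Cref{lemma:red} iteratively: at each step the multidegree loses one (red) variable and both the homological and internal degrees drop by one. The descent terminates because no red multidegree $\mathbf{m}'$ can support a minimal generator of $J^{\mathcal{M}}$ in $\F[\Gamma]$: the corresponding monomial is divisible by some $v\bar v \in I_{\Gamma}$ and therefore vanishes modulo $I_{\Gamma}$, so $\beta^{\F[\Gamma]}_{0,\mathbf{m}'}(J^{\mathcal{M}})=0$ for red $\mathbf{m}'$. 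Consequently, if the descent ever reaches homological position $0$ it must have landed on a blue multidegree. A final application of \Cref{lemma:blue}.iv translates the surviving nonzero $\F[\Gamma]$-Betti number into the desired $\beta^{\F[\mathbf{x},\mathbf{y}]}_{i-h,j-h}(I^{\mathcal{M}})\neq 0$. Parts (ii) and (iii) are then formal consequences: (ii) is the contrapositive of (i) (any putative nonzero Betti number for $J^{\mathcal{M}}$ in the forbidden range descends to a nonzero Betti number for $I^{\mathcal{M}}$ still in that range, as $j-h' > (i-h')+h$ and $i-h'\leq k$), while (iii) uses that the descent preserves the invariant $j-i$.

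For parts (iv)--(vi) I would exploit the additional hypothesis that $\mathcal{M}$ is $y$-only. The key preliminary observation is that a minimal $\F[\mathbf{x},\mathbf{y}]$-resolution of $I^{\mathcal{M}}$ is obtained by tensoring a minimal $\F[\mathbf{y}]$-resolution of $I^{\mathcal{M}}$ with $\F[\mathbf{x}]$, so every multidegree supporting a nonzero $\F[\mathbf{x},\mathbf{y}]$-Betti number of $I^{\mathcal{M}}$ is $y$-only. Any such multidegree is automatically blue, because the only generators of $I_{\Gamma}$ containing a $y$-variable are the products $x_iy_i$, and a purely $y$-multidegree cannot contribute an $x_i$. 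Applying \Cref{lemma:blue}.iv in this blue situation yields (iv); parts (v) and (vi) follow formally by combining (iv) with (ii) and (iii) respectively. For the final ``in particular'' statement in (vi), I would additionally note that the minimal generating sets of $I^{\mathcal{M}}$ and $J^{\mathcal{M}}$ coincide (apply \Cref{lemma:blue}.iv at homological position $0$), so both ideals are equigenerated in the same degree $d_0$; linearity of the resolution is then equivalent to the Castelnuovo--Mumford regularity equalling $d_0$, and the equality of regularities from (vi) closes the deal.

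The main technical hurdle will be the careful termination argument for the Red Lemma descent in part (i): everything else propagates smoothly from the Blue and Red Lemmas via routine manipulations of multigraded Betti numbers.
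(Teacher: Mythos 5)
Your proposal is correct and follows essentially the same route as the paper: Blue Lemma.iv for blue multidegrees, iterated Red Lemma descent within a fixed linear strand terminating (at the latest in homological degree $0$) at a blue multidegree since all generators are blue, with (ii)–(iii) as formal consequences, and for (iv)–(vi) the observation that $y$-only multidegrees carry all nonzero Betti numbers of $I^{\mathcal{M}}$ and are automatically blue. Your explicit termination justification and the flat-extension remark for the $y$-only case are just slightly more detailed versions of the steps the paper leaves implicit.
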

    \begin{proof}
    \begin{itemize}
        \item[i.] Pick $i$ and $j$ such that $\beta^{\F[\Gamma]}_{i, j}(J^{\mathcal{M}}) \neq 0$, and choose a multidegree $\mathbf{m}$ such that $|\mathbf{m}| = j$ and $\beta^{\F[\Gamma]}_{i, \mathbf{m}}(J^{\mathcal{M}}) \neq 0$. If $\mathbf{m}$ is blue, \Cref{lemma:blue}.iv yields the claim with $h=0$. If instead $\mathbf{m}$ is red, we know by \Cref{lemma:red} that we can decrease the homological index from $i$ to $i-1$ while staying in the $(j-i)$-th linear strand of the $\F[\Gamma]$-resolution of $J^{\mathcal{M}}$. This can be repeated until we hit a blue multidegree, which happens at the latest when we reach the zeroth homological degree, since all generators of $J^{\mathcal{M}}$ are blue. Then applying \Cref{lemma:blue}.iv finishes the proof.
        
        \item[ii.--iii.] These statements follow from part i.
        
        \item[iv.] Pick $i$ and $j$ such that $\beta^{\F[\mathbf{x}, \mathbf{y}]}_{i, j}(I^{\mathcal{M}}) \neq 0$, and choose a multidegree $\mathbf{m}$ such that $|\mathbf{m}| = j$ and $\beta^{\F[\mathbf{x}, \mathbf{y}]}_{i, \mathbf{m}}(I^{\mathcal{M}}) \neq 0$. Since no $x$-variables are involved in any minimal generator of $I^{\mathcal{M}}$, one has that $\beta^{\F[\mathbf{x}, \mathbf{y}]}_{i, \mathbf{m}}(I^{\mathcal{M}})$ can be nonzero only if $\mathbf{m}$ is a monomial in the $y$-variables; however, any such $\mathbf{m}$ must be blue, as by construction the only variable $v$ with $vy_i\in I_{\Gamma}$ is $v=x_i$. Applying Lemma \ref{lemma:blue}.iv then yields that $\beta^{\F[\Gamma]}_{i, \mathbf{m}}(J^{\mathcal{M}}) = \beta^{\F[\mathbf{x}, \mathbf{y}]}_{i, \mathbf{m}}(I^{\mathcal{M}})$, and the claim follows.
        
        \item[v.--vi.] These statements follow from combining parts i and iv.
    \end{itemize}    
        
    \end{proof}
    
    \begin{rema} \label{rem:echo}
    Under the hypotheses of \Cref{cor:same_reg}.iv--vi, the Betti table of $J^{\mathcal{M}}$ is a ``horizontal echo'' of the (finite) Betti table of $I^{\mathcal{M}}$.
    
    As an example, take $\F[\Gamma] = \F[\mathbf{x}, \mathbf{y}]/(x_1x_2, x_1x_3, x_1y_1, x_2y_2, x_3y_3)$ and $\mathcal{M} = \{y_1y_2, y_2^2y_3^2, y_3^4\}$. Using Macaulay2 \cite{M2}, we find out that the Betti tables of $I^{\mathcal{M}}$ and $J^{\mathcal{M}}$ look as follows:
    
    \begin{table}[h!]
    \centering
       \begin{minipage}{0.25\textwidth}
    $\begin{array}{r | c c c}
       &0&1&2\\
       \hline\text{2}&\mathbf{1}&\text{.}&\text{.}\\\text{3}&\text
       {.}&\text{.}&\text{.}\\
       \text{4}&\mathbf{2}&\mathbf{1}&\text{.}\\\text{5}&\text{.}&\mathbf{2}&\mathbf{1}\\
       \end{array}$
       \end{minipage}
       \begin{minipage}{0.43\textwidth}
       $\begin{array}{r|c c c c c c c c}
       &0&1&2&3&4&5&6&7\\
       \hline\text{2}&\mathbf{1}&2&5&13&34&89&233&610\\
       \text{3}&\text{.}&\text{.}&\text{.}&\text{.}&\text{.}&\text{.}&\text{.}&\text{.}\\
       \text{4}&\mathbf{2}&\mathbf{4}&10&26&68&178&466&1220\\
       \text{5}&\text{.}&\mathbf{2}&\mathbf{6}&16&42&110&288&754\\
       \end{array}$
     \end{minipage}
     
     \vspace{10pt}
     
     \caption{The Betti table of $I^{\mathcal{M}}$ (left) and the beginning of the Betti table of $J^{\mathcal{M}}$ (right) from \Cref{rem:echo}.}
     \end{table}
    \end{rema}
    
    \begin{rema}
        Note that the statements of \Cref{cor:same_reg}.iv--vi can indeed fail when the elements of $\mathcal{M}$ are not in the $y$-variables only.
        
        For instance, take $\F[\Gamma] = \F[\mathbf{x}, \mathbf{y}]/(x_1x_3, x_1y_1, x_2y_2, x_3y_3, x_4y_4)$ and $\mathcal{M} = \{x_1x_2, x_3x_4\}$. Since $I^{\mathcal{M}} = (x_1x_2, x_3x_4)$ is a complete intersection in $\F[\mathbf{x}, \mathbf{y}]$, one has that $\reg_{\F[\mathbf{x}, \mathbf{y}]}(I^{\mathcal{M}}) = 3$. However, since $J^{\mathcal{M}}$ has linear quotients with respect to the Koszul filtration consisting of all subsets of variables of $\F[\Gamma]$, then $J^{\mathcal{M}}$ has a $2$-linear $\F[\Gamma]$-resolution \cite[Lemma 17]{CDR}; hence, $2 = \reg_{\mathbb{F}[\Gamma]}(J^{\mathcal{M}}) < \reg_{\F[\mathbf{x}, \mathbf{y}]}(I^{\mathcal{M}}) = 3$.
    \end{rema}
    
    We are finally ready to prove \Cref{thm: s_k is k-linear}.\\
	\begin{proof}(of \Cref{thm: s_k is k-linear})
	    Let $\mathcal{M} := \{\mathbf{y}^{[n] \setminus F} \mid F \text{ facet of }\Delta\}$, and let $I^{\mathcal{M}} \subseteq \F[\mathbf{x}, \mathbf{y}]$, $J^{\mathcal{M}} \subseteq \F[\Gamma]$ be as in \Cref{cor:same_reg}. Note that $I^{\mathcal{M}}$ is the extension of the Alexander dual ideal of $\Delta$ to a polynomial ring containing also $x$-variables. By \Cref{thm:ERTY}, the simplicial complex $\Delta$ is $(S_{k})$ if and only if the Alexander dual ideal of $\Delta$ has an $\F[\mathbf{y}]$-resolution which is linear for $k-1$ steps. 
	    This is equivalent to stating that $I^{\mathcal{M}}$ has an $\F[\mathbf{x}, \mathbf{y}]$-resolution which is linear for $k-1$ steps. By \Cref{cor:same_reg}.v, this is in turn equivalent to stating that $J^{\mathcal{M}}$ has an $\F[\Gamma]$-resolution which is linear for $k-1$ steps. By \Cref{prop: Bier dual}.iv, the ideal $J^{\mathcal{M}}$ coincides with the canonical module $\omega_{\F[\Gamma]}$. Finally, applying \Cref{lem: from omega to F} with $A = \F[\Gamma]$, $M = \omega_{\F[\Gamma]}(-a(\F[\Gamma])-1)$ concludes the proof.
	\end{proof}

	\begin{rema} \label{rem: codim_and_reg}
		In \cite{MSSI} the authors consider the problem of characterizing the pairs of positive integers $(c,r)$ for which there exists a non-Koszul quadratic Gorenstein algebra $R$ with codimension $c$ and regularity $r$. The results of \cite{MSSI}, \cite{MSSII} and \cite{McSe} leave only two cases open, namely $(c,r)=(6,3)$ and $(c,r)=(7,3)$. One might wonder if the results in this section provide examples of non-Koszul quadratic Gorenstein algebras with such invariants. However, this is not the case: if $R_{\Delta}$ has regularity 3, then $\Delta$ must be 1-dimensional, and in this case $\Delta$ satisfies $(S_2)$ if and only if it is Cohen--Macaulay (over any field).
	\end{rema}	

	\subsection{An application: quadratic Gorenstein algebras which are not Koszul in prescribed characteristics} \label{subsec: Koszulness and characteristic}
	We conclude this section with an application of \Cref{cor: koszul_iff_cm}. Given a finite list of prime numbers $P=\{p_1,\dots,p_m\}$, our result can be used to construct quadratic Gorenstein $\mathbb{F}$-algebras which are Koszul if and only if $\text{char}(\mathbb{F})\notin P$. Moreover, the presentations of these algebras will be characteristic-free, see \Cref{prop:RDelta_presentation}.
	
	To do so, it suffices to exhibit a flag simplicial complex which is Cohen--Macaulay over $\mathbb{F}$ if and only if the characteristic of $\mathbb{F}$ is not in $P$. Note that such a complex will automatically be $(S_2)$ (which corresponds to $R_{\Delta}$ having a quadratic presentation, see \Cref{prop:superlevel}), since the $(S_2)$ property is characteristic-free and the complex is Cohen--Macaulay in some characteristic.
	
	We will focus on flag triangulations of $3$-dimensional \emph{lens spaces}. A $3$-dimensional lens space is an orientable $3$-manifold obtained as a quotient of the $3$-sphere by certain rotations (see \cite[Example 2.43]{Hat}). Such spaces are parametrized by two coprime integers $p,q\geq 1$, and $L(p_1,q_1)\cong L(p_2,q_2)$ if and only if $p_1=p_2$ and $q_1=\pm q_1^{\pm 1} \mod{p}$. The reduced integral homology groups of $L(p,q)$ are as follows: $\tilde{H}_0(L(p,q))= \tilde{H}_2(L(p,q))=0$, $\tilde{H}_3(L(p,q))=\mathbb{Z}$ and $\tilde{H}_1(L(p,q))=\mathbb{Z}_p$. As lens spaces are homology $3$-manifolds, the link of any nonempty face in any triangulation $\Delta$ of $L(p,q)$ is a homology sphere. Hence, by Reisner's criterion together with the universal coefficient theorem \cite[Section 3A]{Hat}, $\Delta$ is Cohen--Macaulay over a field $\mathbb{F}$ if and only if $\tilde{H}_1(\Delta;\mathbb{F})=\tilde{H}_1(\Delta;\mathbb{Z})\otimes \mathbb{F}=\mathbb{F}/p\mathbb{F}=0$. In particular, every triangulation of $L(p,q)$ is Cohen--Macaulay over $\mathbb{F}$ if and only if $\text{char}(\mathbb{F})\neq p$. Therefore if $\Delta$ is a flag triangulation of $L(p,q)$, then $R_{\Delta}$ is a quadratic Gorenstein $\mathbb{F}$-algebra which is Koszul if and only if $\text{char}(\mathbb{F})\neq p$. We will generalize this fact by considering connected sums of lens spaces. 
	\begin{prop}\label{prop: lens spaces}
	    Let $P = \{p_1,\dots,p_m\}$ be a set of prime numbers. Let $\Delta$ be any flag triangulation of the connected sum $L(p_1,q_1)\# \cdots \# L(p_m,q_m)$, for some positive integers $q_1,\dots,q_m$. Then $R_{\Delta}$ is a quadratic Gorenstein $\mathbb{F}$-algebra that is Koszul if and only if $\text{char}(\mathbb{F})\notin P$.
	\end{prop}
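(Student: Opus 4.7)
The plan is to reduce the statement to the criterion in \Cref{cor: koszul_iff_cm} together with \Cref{prop:superlevel}, and then verify the homological conditions on $\Delta$ using classical topology of $3$-manifolds.

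First I would observe that a flag triangulation of the connected sum $M := L(p_1,q_1)\# \cdots \# L(p_m,q_m)$ exists: starting from any PL triangulation of $M$, sufficiently many barycentric subdivisions produce a flag complex (this is standard, following e.g.~the argument that iterated barycentric subdivisions of any simplicial complex are flag). Next, since $M$ is a closed orientable $3$-manifold, the link of every nonempty face of $\Delta$ is a PL $2$-sphere, and hence a $\mathbb{Z}$-homology sphere. By Reisner's criterion, it follows that $\Delta$ is Cohen--Macaulay over a field $\mathbb{F}$ if and only if $\widetilde{H}_i(\Delta;\mathbb{F})=0$ for all $i<2$. In particular, connectedness takes care of $i=0$, so the only remaining condition is the vanishing of $\widetilde{H}_1(\Delta;\mathbb{F})$.

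Second, I would compute $\widetilde{H}_1(M;\mathbb{Z})$. By iterated application of the Mayer--Vietoris sequence for a connected sum of $3$-manifolds, one obtains $\widetilde{H}_1(M;\mathbb{Z})\cong \bigoplus_{i=1}^{m}\widetilde{H}_1(L(p_i,q_i);\mathbb{Z})=\bigoplus_{i=1}^{m}\mathbb{Z}_{p_i}$. By the universal coefficient theorem,
\[
\widetilde{H}_1(M;\mathbb{F})\cong \left(\bigoplus_{i=1}^{m}\mathbb{Z}_{p_i}\right)\otimes_{\mathbb{Z}}\mathbb{F} \oplus \mathrm{Tor}_1^{\mathbb{Z}}\!\left(\widetilde{H}_0(M;\mathbb{Z}),\mathbb{F}\right),
\]
and the Tor term vanishes because $\widetilde{H}_0(M;\mathbb{Z})=0$. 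So $\widetilde{H}_1(\Delta;\mathbb{F})=\widetilde{H}_1(M;\mathbb{F})=0$ if and only if $p_i\cdot \mathbb{F}=\mathbb{F}$ for every $i$, which is precisely the condition $\mathrm{char}(\mathbb{F})\notin P$. Combining this with the Reisner analysis of the previous paragraph, $\Delta$ is Cohen--Macaulay over $\mathbb{F}$ if and only if $\mathrm{char}(\mathbb{F})\notin P$.

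Finally I would observe that $\Delta$ is always $(S_2)$: it is pure (being a triangulation of a manifold) and the link of every face of codimension at least $2$ is a sphere of positive dimension, hence connected. By \Cref{prop:superlevel}, $R_\Delta$ is therefore quadratic regardless of the characteristic. Applying \Cref{cor: koszul_iff_cm} then yields that $R_\Delta$ is Koszul over $\mathbb{F}$ exactly when $\Delta$ is Cohen--Macaulay over $\mathbb{F}$, i.e.~exactly when $\mathrm{char}(\mathbb{F})\notin P$. The main conceptual obstacle is simply bookkeeping in the Mayer--Vietoris computation for the connected sum; the existence of a flag triangulation and the topology of lens spaces are standard.
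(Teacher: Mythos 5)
Your strategy is essentially the paper's: reduce Koszulness to Cohen--Macaulayness via \Cref{cor: koszul_iff_cm}, obtain quadraticity from flagness plus $(S_2)$ via \Cref{prop:superlevel}, and compute the homology of the connected sum by Mayer--Vietoris together with the universal coefficient theorem. There is, however, an off-by-one slip in your application of Reisner's criterion that leaves a (fixable) gap. Since $\Delta$ is $3$-dimensional, Cohen--Macaulayness over $\mathbb{F}$ requires $\widetilde{H}_i(\Delta;\mathbb{F})=0$ for all $i<3$, not just $i<2$: you must also rule out $\widetilde{H}_2(\Delta;\mathbb{F})\neq 0$. This does not follow from your computation of $\widetilde{H}_1$ alone, because by the universal coefficient theorem
\[
\widetilde{H}_2(\Delta;\mathbb{F})\;\cong\;\bigl(\widetilde{H}_2(\Delta;\mathbb{Z})\otimes_{\mathbb{Z}}\mathbb{F}\bigr)\;\oplus\;\mathrm{Tor}_1^{\mathbb{Z}}\bigl(\widetilde{H}_1(\Delta;\mathbb{Z}),\mathbb{F}\bigr),
\]
so you need the additional fact $\widetilde{H}_2(\Delta;\mathbb{Z})=0$, which the paper records explicitly; it comes out of the same Mayer--Vietoris computation (or from Poincar\'e duality, since the connected sum is a closed orientable $3$-manifold whose first homology is finite). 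With that in hand the argument closes: when $\mathrm{char}(\mathbb{F})\notin P$ both summands above vanish, so $\widetilde{H}_2(\Delta;\mathbb{F})=0$ and $\Delta$ is Cohen--Macaulay; when $\mathrm{char}(\mathbb{F})\in P$ your $\widetilde{H}_1$ computation already shows $\Delta$ fails to be Cohen--Macaulay, so the extra Tor term does no harm.

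Two smaller points. The link of a nonempty $k$-dimensional face is a sphere of dimension $2-k$, not always a $2$-sphere; your actual use of this (vanishing homology below the top dimension of the link) is still correct. Also, the existence of flag triangulations via barycentric subdivision is not needed here, since the statement takes a flag triangulation $\Delta$ as given (the paper makes this remark only after the proof). Your direct verification of $(S_2)$ through connectedness of links of codimension-two faces is fine and is a mild variant of the paper's observation that $(S_2)$ is characteristic-free and holds because $\Delta$ is Cohen--Macaulay in some characteristic.
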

	\begin{proof}
	    As $\Delta$ is a triangulated $3$-manifold, one has that $\tilde{H}_i(\lk_{\Delta}(F);\mathbb{Z})=0$ for every $\emptyset\neq F\in \Delta$ and for every $i<\dim(F)$. Moreover, we can compute the homology of a connected sum of manifolds by a standard application of the Mayer--Vietoris sequence. This yields that $\tilde{H}_2(\Delta;\mathbb{Z})=0$ and $\tilde{H}_1(\Delta;\mathbb{Z})=\bigoplus_{p\in P} \mathbb{Z}/p\mathbb{Z}$. Thus, we have that $\Delta$ is Cohen--Macaulay over a field $\mathbb{F}$ if and only if $\tilde{H}_1(\Delta;\mathbb{Z})\otimes\mathbb{F}=\bigoplus_{p\in P} \mathbb{F}/p\mathbb{F}=0$, which in turn happens if and only if $\text{char}(\mathbb{F})\notin P$. We conclude by using \Cref{cor: koszul_iff_cm}.
	\end{proof}
	Requiring that $\Delta$ is flag is not restrictive, as any triangulable space has a flag triangulation given, for example, by its barycentric subdivision \cite[III.4]{St_green}. However, these simplicial complexes typically have a lot of vertices, and it is a challenging problem to find vertex-minimal flag triangulations of a given space.
	
	Finally, we observe that \Cref{prop: lens spaces} can be applied verbatim to flag triangulations of higher-dimensional lens spaces, yielding quadratic Gorenstein algebras with the same behaviour as in \Cref{prop: lens spaces} but with an $h$-polynomial of higher degree.
	
	\begin{exam}\label{ex: RP2} Let $\Delta$ be a flag triangulation of $\mathbb{RP}^2$. In \cite{BOWWZZ} the authors construct two such non-isomorphic triangulations with the same $f$-vector $f(\Delta)=(1,11,30,20)$. We let $\Delta$ be the simplicial complex whose list of facets is reported in the left column of \cite[Table 1]{BOWWZZ}. The algebra $R_{\Delta}$ is then presented as $T/J$,  with 
	\begin{align*}
	T = \mathbb{F}[& x_1,\dots,x_{9},x_a,x_{b},y_1,\dots,y_9,y_a,y_{b},z_{145},z_{126},z_{156},z_{237},z_{347},z_{267},\\
	& z_{148}, z_{478},z_{129},
	z_{189},z_{23a}, z_{34a},z_{45a},z_{29a},z_{56b},z_{67b},z_{78b},z_{89b},z_{5ab},z_{9ab}]
	\end{align*}
	and
	    \begingroup
	    \allowdisplaybreaks
	   \begin{align*}
	        J = (& x_{\bsp}z_{145}, x_{\asp}z_{145}, x_{9}z_{145}, x_{8}z_{145}, x_{7}z_{145}, x_{6}z_{145}, x_{3}z_{145}, x_{2}z_{145}, x_{\bsp}z_{126}, x_{\asp}z_{126}, x_{9}z_{126}, x_{8}z_{126},\\ & x_{7}z_{126}, x_{5}z_{126}, x_{4}z_{126}, x_{3}z_{126}, x_{\bsp}z_{156}, x_{\asp}z_{156}, x_{9}z_{156}, x_{8}z_{156}, x_{7}z_{156}, x_{4}z_{156}, x_{3}z_{156}, x_{2}z_{156},\\ & x_{\bsp}z_{237}, x_{\asp}z_{237}, x_{9}z_{237}, x_{8}z_{237}, x_{6}z_{237}, x_{5}z_{237}, x_{4}z_{237}, x_{1}z_{237}, x_{\bsp}z_{347}, x_{\asp}z_{347}, x_{9}z_{347}, x_{8}z_{347},\\ & x_{6}z_{347}, x_{5}z_{347}, x_{2}z_{347}, x_{1}z_{347}, x_{\bsp}z_{267}, x_{\asp}z_{267}, x_{9}z_{267}, x_{8}z_{267}, x_{5}z_{267}, x_{4}z_{267}, x_{3}z_{267}, x_{1}z_{267},\\
	        & x_{\bsp}z_{148}, x_{\asp}z_{148}, x_{9}z_{148}, x_{7}z_{148}, x_{6}z_{148}, x_{5}z_{148}, x_{3}z_{148}, x_{2}z_{148}, x_{\bsp}z_{478}, x_{\asp}z_{478}, x_{9}z_{478}, x_{6}z_{478},\\ & x_{5}z_{478}, x_{3}z_{478}, x_{2}z_{478}, x_{1}z_{478}, x_{\bsp}z_{129}, x_{\asp}z_{129}, x_{8}z_{129}, x_{7}z_{129}, x_{6}z_{129}, x_{5}z_{129}, x_{4}z_{129}, x_{3}z_{129},\\ & x_{\bsp}z_{189}, x_{\asp}z_{189}, x_{7}z_{189}, x_{6}z_{189}, x_{5}z_{189}, x_{4}z_{189}, x_{3}z_{189}, x_{2}z_{189}, x_{\bsp}z_{23\asp}, x_{9}z_{23\asp}, x_{8}z_{23\asp}, x_{7}z_{23\asp},\\ & x_{6}z_{23\asp}, x_{5}z_{23\asp}, x_{4}z_{23\asp}, x_{1}z_{23\asp}, x_{\bsp}z_{34\asp}, x_{9}z_{34\asp}, x_{8}z_{34\asp}, x_{7}z_{34\asp}, x_{6}z_{34\asp}, x_{5}z_{34\asp}, x_{2}z_{34\asp}, x_{1}z_{34\asp},\\ &x_{\bsp}z_{45\asp}, x_{9}z_{45\asp}, x_{8}z_{45\asp}, x_{7}z_{45\asp}, x_{6}z_{45\asp}, x_{3}z_{45\asp}, x_{2}z_{45\asp}, x_{1}z_{45\asp}, x_{\bsp}z_{29\asp}, x_{8}z_{29\asp}, x_{7}z_{29\asp}, x_{6}z_{29\asp},\\
	        &x_{5}z_{29\asp}, x_{4}z_{29\asp}, x_{3}z_{29\asp}, x_{1}z_{29\asp}, x_{\asp}z_{56\bsp}, x_{9}z_{56\bsp}, x_{8}z_{56\bsp}, x_{7}z_{56\bsp}, x_{4}z_{56\bsp}, x_{3}z_{56\bsp}, x_{2}z_{56\bsp}, x_{1}z_{56\bsp},\\
	        &x_{\asp}z_{67\bsp}, x_{9}z_{67\bsp}, x_{8}z_{67\bsp}, x_{5}z_{67\bsp}, x_{4}z_{67\bsp}, x_{3}z_{67\bsp}, x_{2}z_{67\bsp}, x_{1}z_{67\bsp}, x_{\asp}z_{78\bsp}, x_{9}z_{78\bsp}, x_{6}z_{78\bsp}, x_{5}z_{78\bsp},\\ &x_{4}z_{78\bsp}, x_{3}z_{78\bsp}, x_{2}z_{78\bsp}, x_{1}z_{78\bsp}, x_{\asp}z_{89\bsp}, x_{7}z_{89\bsp}, x_{6}z_{89\bsp}, x_{5}z_{89\bsp}, x_{4}z_{89\bsp}, x_{3}z_{89\bsp}, x_{2}z_{89\bsp}, x_{1}z_{89\bsp},\\  &x_{9}z_{5\asp\bsp}, x_{8}z_{5\asp\bsp}, x_{7}z_{5\asp\bsp}, x_{6}z_{5\asp\bsp}, x_{4}z_{5\asp\bsp}, x_{3}z_{5\asp\bsp}, x_{2}z_{5\asp\bsp}, x_{1}z_{5\asp\bsp}, x_{8}z_{9\asp\bsp}, x_{7}z_{9\asp\bsp}, x_{6}z_{9\asp\bsp}, x_{5}z_{9\asp\bsp},\\ &x_{4}z_{9\asp\bsp}, x_{3}z_{9\asp\bsp}, x_{2}z_{9\asp\bsp}, x_{1}z_{9\asp\bsp})\\
	     + \ (&y_{4}z_{145} - y_{6}z_{156}, y_{2}z_{126} - y_{5}z_{156}, y_{2}z_{237} - y_{4}z_{347}, y_{1}z_{126} - y_{7}z_{267}, y_{3}z_{237} - y_{6}z_{267}, y_{5}z_{145} - y_{8}z_{148},\\ & y_{3}z_{347} - y_{8}z_{478}, y_{1}z_{148} - y_{7}z_{478}, y_{6}z_{126} - y_{9}z_{129}, y_{4}z_{148} - y_{9}z_{189}, y_{2}z_{129} - y_{8}z_{189}, y_{\asp}z_{23\asp} - y_{7}z_{237},\\ & y_{\asp}z_{34\asp} - y_{7}z_{347}, y_{2}z_{23\asp} - y_{4}z_{34\asp}, y_{1}z_{145} - y_{\asp}z_{45\asp}, y_{3}z_{34\asp} - y_{5}z_{45\asp}, y_{1}z_{129} - y_{\asp}z_{29\asp}, y_{3}z_{23\asp} - y_{9}z_{29\asp},\\ &y_{1}z_{156} - y_{\bsp}z_{56\bsp}, y_{2}z_{267} - y_{\bsp}z_{67\bsp}, y_{5}z_{56\bsp} - y_{7}z_{67\bsp}, y_{4}z_{45\asp} - y_{\bsp}z_{5\asp\bsp}, y_{\asp}z_{5\asp\bsp} - y_{6}z_{56\bsp}, y_{4}z_{478} - y_{\bsp}z_{78\bsp},\\ &y_{6}z_{67\bsp} - y_{8}z_{78\bsp}, y_{1}z_{189} - y_{\bsp}z_{89\bsp}, y_{7}z_{78\bsp} - y_{9}z_{89\bsp}, y_{2}z_{29\asp} - y_{\bsp}z_{9\asp\bsp}, y_{8}z_{89\bsp} - y_{\asp}z_{9\asp\bsp}, y_{5}z_{5\asp\bsp} - y_{9}z_{9\asp\bsp})\\
	     + \ (&x_{2}x_{8}, x_{4}x_{6}, x_{3}x_{9}, x_{3}x_{6}, x_{7}x_{9}, x_{3}x_{8}, x_{6}x_{8}, x_{b}x_{2}, x_{4}x_{9}, x_{1}x_{a}, x_{5}x_{9}, x_{5}x_{7}, x_{6}x_{9}, x_{1}x_{7}, x_{a}x_{7},\\
	     & x_{1}x_{b}, x_{a}x_{6}, x_{2}x_{5}, x_{a}x_{8}, x_{2}x_{4}, x_{b}x_{4}, x_{3}x_{5}, x_{5}x_{8}, x_{b}x_{3}, x_{1}x_{3})\\
         + \ (&x_{1}y_{1},x_{2}y_{2},x_{3}y_{3},x_{4}y_{4},x_{5}y_{5},x_{6}y_{6},x_{7}y_{7},x_{8}y_{8},x_{9}y_{9},x_{a}y_{a},x_{b}y_{b})\\
         + \  (&z_{145},z_{126},z_{156},z_{237},z_{347},z_{267},z_{148},z_{478},z_{129},z_{189},z_{23a},z_{34a},z_{45a},z_{29a},z_{56b},z_{67b},z_{78b},\\
         &z_{89b},z_{5ab},z_{9ab})^2. 
	    \end{align*}
	    \endgroup
	    
	    The Hilbert series of the Gorenstein algebra $R_{\Delta}$ is equal to
	    \[
	    \frac{1+31t+60t^2+31t^3+t^4}{(1-t)^{11}}.
	    \]
	    We can verify \Cref{cor: koszul_iff_cm} on this example. As any triangulation of the projective plane is Cohen--Macaulay over $\mathbb{F}$ if and only if the characteristic of the field is not equal to $2$, we expect to observe a different behaviour for the resolution of $\mathbb{F}$ as an $R_{\Delta}$-module in the cases $\mathbb{F}=\mathbb{Z}_2$ and $\mathbb{F}=\mathbb{Z}_3$. Indeed, when $\mathbb{F}=\mathbb{Z}_2$, we find a nonlinear syzygy in homological degree $3$, as shown in \Cref{tab: betti}. The numbers in \Cref{tab: betti} have been computed via Macaulay2 \cite{M2}, using the commands {\texttt{ use(T/J); betti res(ideal gens(T/J), LengthLimit $=>$ 3)}}. 
	    \begin{table}[!htb]
	        \centering
	        \begin{minipage}{0.35\textwidth}
	        $\begin{array}{r|c c c c}
	             &  0 & 1 & 2 & 3 \\\hline
	           0  & 1 & 42 & 1297 & 37883\\
	           1 & \text{.} & \text{.} & \text{.} & \mathbf{1}
	        \end{array}$
	        \end{minipage}
	       \begin{minipage}{0.35\textwidth}
	       $\begin{array}{r|c c c c}
	             &  0 & 1 & 2 & 3 \\\hline
	           0  & 1 & 42 & 1297 & 37883\\
	           1 & \text{.} & \text{.} & \text{.} & \mathbf{.}
	        \end{array}$
	        \end{minipage}
	        
	        \vspace{10pt}
	        
	        \caption{The beginning of the Betti tables of $\mathbb{F}$ as an $R_{\Delta}$-module in the cases $\mathbb{F}=\mathbb{Z}_2$ (left) and $\mathbb{F}=\mathbb{Z}_3$ (right). Here $\Delta$ is the flag triangulation of $\mathbb{RP}^2$ from \Cref{ex: RP2}.}
	        \label{tab: betti}
	    \end{table}
	    
	\end{exam}

	\settocdepth{section}
	
	\section{Quadratic Gr\"{o}bner bases and shelling orders}\label{sec: shellable}
	In this section we add a further connection between the combinatorial features of $\Delta$ and the algebraic properties of $R_{\Delta}$. We begin by recalling the notion of shellability.
	\begin{defi} \label{def: shelling}
	    A pure $(d-1)$-dimensional simplicial complex $\Delta$ is \emph{shellable} if there exists a linear order $F_1,\dots,F_{|\mathcal{F}(\Delta)|}$ of its facets such that $\Delta_{i-1}\cap\langle F_i\rangle$ is pure and $(d-2)$-dimensional for every $i =  2,\dots,|\mathcal{F}(\Delta)|$, where $\Delta_{i}=\langle F_1,\dots,F_i \rangle$. The ordering $F_1,\dots,F_{|\mathcal{F}(\Delta)|}$ is called a \emph{shelling order}, and we will often refer to the step $\Delta_{i-1}\to\Delta_i$ as a \emph{shelling}.
	\end{defi}
	This property poses severe restrictions on the topology of $\Delta$, as shellable simplicial complexes are homotopy equivalent to a wedge of some (possibly zero) $(d-1)$-spheres. Boundary complexes of simplicial polytopes of any dimension are shellable, while there exist non-shellable simplicial spheres and balls already in dimension $3$. Moreover, all links of a shellable simplicial complex are again shellable. Shellability has also implications on a more algebraic level, like the following.
	\begin{prop}
	    Shellable simplicial complexes are Cohen--Macaulay over any field.
	\end{prop}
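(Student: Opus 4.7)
Plan: I would verify Reisner's criterion, i.e., show that for every face $F\in\Delta$ one has $\tilde{H}_i(\lk_\Delta F;\F)=0$ for every $i<\dim(\lk_\Delta F)$; by \Cref{def: Serre} this gives $(S_d)$, which for a $(d-1)$-dimensional complex is Cohen--Macaulayness over $\F$.

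The first step is to reduce to the case $F=\varnothing$ by invoking the classical fact that links of shellable complexes are shellable. Explicitly, given a shelling $F_1\prec\cdots\prec F_r$ of $\Delta$ and a face $G\in\Delta$, the facets of $\lk_\Delta G$ are the sets $F_i\setminus G$ for those indices with $G\subseteq F_i$, and the ordering inherited from the original shelling satisfies the definition of a shelling of $\lk_\Delta G$ (a direct check unwinding \Cref{def: shelling} and using that intersections of the form $(\Delta_{i-1}\cap\langle F_i\rangle)\cap \lk$ behave well with the link operation). Since $\lk_\Delta G$ is a shellable $(d-|G|-1)$-dimensional complex, the problem reduces to proving that any shellable $(d-1)$-dimensional complex $\Delta$ has $\tilde H_i(\Delta;\F)=0$ for $i<d-1$.

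The second step is to strengthen this to the topological statement that a shellable pure $(d-1)$-dimensional complex is homotopy equivalent to a wedge of $(d-1)$-spheres (possibly empty, in which case it is contractible), and prove it by induction on the number $r$ of facets. The base case $r=1$ is trivial, as $\Delta=\langle F_1\rangle$ is a simplex. For the inductive step, write $\Delta_r=\Delta_{r-1}\cup\langle F_r\rangle$; by definition of shellability, the intersection $\Delta_{r-1}\cap\langle F_r\rangle$ is a pure $(d-2)$-dimensional shellable subcomplex of the sphere $\partial\langle F_r\rangle$. If this intersection equals the whole boundary, attaching the simplex $\langle F_r\rangle$ along it glues in a $(d-1)$-cell along its bounding sphere, adding a new $(d-1)$-sphere summand to the wedge up to homotopy; otherwise the intersection is a proper shellable subcomplex of $\partial\langle F_r\rangle$, hence by a parallel induction on dimension (a shellable ball is collapsible and therefore contractible) it is contractible, so the attachment does not change the homotopy type beyond the inductive hypothesis. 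Combining with Mayer--Vietoris (or Van Kampen-style reasoning on cofibrations) yields the claimed homotopy type, and in particular the desired vanishing of low-degree reduced homology.

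The main obstacle is the interlocking induction needed to handle the two cases of the intersection: one must simultaneously control the topology of shellable complexes as wedges of top-dimensional spheres and the fact that proper shellable subcomplexes of a simplex boundary are contractible. Once that bookkeeping is in place, Reisner's criterion immediately delivers Cohen--Macaulayness of $\F[\Delta]$ over any field, and the statement follows.
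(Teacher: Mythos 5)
Your argument is correct and is essentially the route the paper implicitly relies on: the proposition is stated there without proof, immediately after recording exactly the two ingredients you use (links of shellable complexes are shellable, and pure shellable complexes are homotopy equivalent to a wedge of top-dimensional spheres), which combine with Reisner's criterion just as you describe. One small simplification for your inductive step: when $\Delta_{r-1}\cap\langle F_r\rangle$ is a \emph{proper} pure $(d-2)$-dimensional subcomplex of $\partial\langle F_r\rangle$, every one of its facets omits only vertices from a proper subset of $F_r$, so all facets share a common vertex and the intersection is a cone, hence contractible without invoking collapsibility of shellable balls.
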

	The converse is far from being true and fails already in dimension $2$: it is well known that any triangulation of the so-called \emph{dunce hat} is Cohen--Macaulay over any field, but not shellable. However, the set of $f$-vectors of Cohen--Macaulay complexes and that of shellable ones coincide in any dimension \cite[Theorem 6]{StaCM}. \\
	The goal of the previous sections was to show a connection between the Cohen--Macaulayness of a flag simplicial complex $\Delta$ and the Koszulness of $R_{\Delta}$. The next result shows that we can tighten both conditions and still obtain a correspondence.
	\begin{theorem}\label{thm: shellable iff qgb}
	    Let $\Delta$ be a pure flag simplicial complex. Then $\Delta$ is shellable if and only if $R_{\Delta}$ has a quadratic Gr\"{o}bner basis.
	\end{theorem}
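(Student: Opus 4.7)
The plan is to leverage \Cref{prop:uni gb}: since $\mathcal{U}$ is a universal Gr\"obner basis, $R_{\Delta}$ has a quadratic Gr\"obner basis with respect to some term order $<$ if and only if every $g \in \mathcal{U}$ with $\deg \init_<(g) \geq 3$ has $\init_<(g)$ divisible by the initial term of some quadratic $h \in \mathcal{U}$. Scanning the list in \Cref{prop:RDelta_presentation}, the only possibly nonquadratic initial terms come from binomials $b_{F, F'}$ with $|F \setminus F'| \geq 2$, and among the quadratic elements of $\mathcal{U}$ the only ones whose initial term can divide such a monomial are binomials $b_{F, G}$ with $|F \setminus G| = 1$, whose initial term has the shape $y_a z_F$ for some $a \in F \setminus G$.

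For the forward direction, given a shelling $F_1, \ldots, F_r$ I would choose the lexicographic order with $z_{F_r} > \cdots > z_{F_1} > y_1 > \cdots > y_n > x_1 > \cdots > x_n$. Then $\init_<(b_{F_i, F_j}) = \mathbf{y}^{F_i \setminus F_j} z_{F_i}$ whenever $j < i$. When $|F_i \setminus F_j| \geq 2$, the face $F_i \cap F_j$ sits inside the pure $(d-2)$-dimensional complex $\Delta_{i-1} \cap \langle F_i \rangle$ provided by shellability, so it is contained in some codimension-$1$ face $F_i \setminus \{a\}$ of $F_i$ with $a \in F_i \setminus F_j$ and $F_i \setminus \{a\} \subseteq F_k$ for some $k < i$. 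Purity forces $F_k = (F_i \setminus \{a\}) \cup \{c\}$, and the quadratic binomial $b_{F_i, F_k}$ has initial term $y_a z_{F_i}$, which divides $\mathbf{y}^{F_i \setminus F_j} z_{F_i}$, as required.

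For the backward direction, fix a term order $<$ yielding a quadratic Gr\"obner basis and declare $F \prec F'$ whenever $\init_<(b_{F, F'})$ involves $z_{F'}$. The critical step is showing $\prec$ is transitive: if $F \prec F'$ and $F' \prec F''$, multiplying the two defining inequalities and cancelling the common factor $z_{F'}$ yields
\[
\mathbf{y}^{F' \setminus F} \mathbf{y}^{F'' \setminus F'} z_{F''} ~>~ \mathbf{y}^{F \setminus F'} \mathbf{y}^{F' \setminus F''} z_F,
\]
and a direct case analysis shows that each of the two $\mathbf{y}$-products splits as $\mathbf{y}^{(F' \setminus (F \cup F'')) \cup ((F \cap F'') \setminus F')}$ times $\mathbf{y}^{F'' \setminus F}$ and $\mathbf{y}^{F \setminus F''}$ respectively. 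Cancelling the shared factor proves $F \prec F''$. Once transitivity is in place, I would relabel the facets so that $F_1 \prec \cdots \prec F_r$ and verify shellability by checking, for every $j < i$, that $F_i \cap F_j$ lies in a codimension-$1$ face of $F_i$ already belonging to $\Delta_{i-1}$. This is automatic when $|F_i \setminus F_j| = 1$; otherwise, the divisibility observation from the first paragraph applied to the leading term $\mathbf{y}^{F_i \setminus F_j} z_{F_i}$ of $b_{F_j, F_i}$ produces $a \in F_i \setminus F_j$ and a facet $G = (F_i \setminus \{a\}) \cup \{c\} \prec F_i$. Then $G = F_k$ for some $k < i$, and $F_i \setminus \{a\} \subseteq F_k$ supplies the required codimension-$1$ face.

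The main obstacle is the transitivity of $\prec$ in the backward direction, which ultimately rests on the set-theoretic identity $(F' \setminus F) \sqcup (F'' \setminus F') = (F'' \setminus F) \sqcup (F' \setminus (F \cup F'')) \sqcup ((F \cap F'') \setminus F')$; this is the only substantial combinatorial manipulation in the proof, and the rest is bookkeeping about which quadratic elements of $\mathcal{U}$ can serve as divisors.
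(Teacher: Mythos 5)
Your argument is correct, but it takes a genuinely different route from the paper's. Your key move is to exploit the universal Gr\"obner basis $\mathcal{U}$ of \Cref{prop:uni gb} directly: for any term order $<$, the initial ideal is generated by $\{\init_<(g) : g \in \mathcal{U}\}$, so (by homogeneity and the absence of linear forms) a quadratic Gr\"obner basis exists for $<$ exactly when each nonquadratic leading term $\mathbf{y}^{F\setminus F'}z_F$ (with $F' \prec F$ in the induced order) is divisible by some quadratic leading term, which your case analysis correctly reduces to the existence of a facet $H \prec F$ with $F\cap F' \subseteq H$ and $|F \cap H| = d-1$ --- literally the shelling condition for the induced order. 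This bypasses the paper's machinery in both directions: in the ``only if'' direction the paper instead verifies Buchberger's criterion for the quadratic set $C_{\Delta}$, which requires \Cref{lem:single_reduction} and the delicate \Cref{lem: reduce S} together with a colexicographically-minimal path argument in links of codimension-two faces; in the ``if'' direction the paper argues by contradiction through Koszulness and \Cref{cor: koszul_iff_cm} (to get connectivity of links in codimension one) before exhibiting an $S$-pair that fails to reduce, whereas you never need Cohen--Macaulayness at all. Both proofs rest on \Cref{prop:uni gb} and on the transitivity of the induced facet order (\Cref{lem: term order induce alla luce}, which you re-derive via a correct multiset identity; the paper's trick of multiplying $b_{F,F'}$ by $\mathbf{y}^{F\cap F'}$ is a shortcut for the same computation). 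What the paper's longer route buys is finer information about which $S$-pairs reduce and how (\Cref{lem: reduce S} describes explicit reduction paths), while your route is shorter and makes transparent that a term order yields a quadratic Gr\"obner basis precisely when the facet order it induces is a shelling order; the one point worth stating explicitly in a write-up is the equivalence ``quadratic Gr\"obner basis w.r.t.\ $<$ $\Leftrightarrow$ $\init_<(I)$ generated in degree two, whose degree-two monomials are exactly the leading terms of the quadratic elements of $\mathcal{U}$,'' which you use in both directions and which holds because the ideal is homogeneous and contained in the square of the maximal ideal.
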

	Recall that if a standard graded algebra has quadratic Gr\"{o}bner basis, then it is Koszul. Before embarking on the proof of \Cref{thm: shellable iff qgb} we observe the following fact.
	\begin{lem}\label{lem: term order induce alla luce}
	Any term order $<$ on $\F[\mathbf{x},\mathbf{y},\mathbf{z}]$ induces a total order $\prec$ on the facets of $\Delta$, defined as the reflexive closure of the following rule: \[
	    F_{\ell} \prec F_{k} \Leftrightarrow \lt_<(b_{F_k, F_{\ell}}) = \mathbf{y}^{F_k \setminus F_{\ell}}z_{F_k}.
	    \]
	\end{lem}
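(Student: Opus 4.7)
The plan is to recognize $\prec$ as the pullback of the term order $<$ via an injective map sending each facet $F$ to a distinguished monomial $\mathbf{y}^F z_F$. Concretely, I would first establish the following equivalence for any two facets $F_k, F_\ell$ of $\Delta$:
\[
\lt_<(b_{F_k, F_\ell}) = \mathbf{y}^{F_k \setminus F_\ell} z_{F_k} \iff \mathbf{y}^{F_k} z_{F_k} >_< \mathbf{y}^{F_\ell} z_{F_\ell}.
\]
This reduces to a one-line check using multiplicativity of the term order: multiplying both sides of either inequality by the common squarefree monomial $\mathbf{y}^{F_k \cap F_\ell}$ converts $\mathbf{y}^{F_k \setminus F_\ell} z_{F_k}$ into $\mathbf{y}^{F_k} z_{F_k}$ and $\mathbf{y}^{F_\ell \setminus F_k} z_{F_\ell}$ into $\mathbf{y}^{F_\ell} z_{F_\ell}$, and conversely the same multiplication can be inverted by cancellation.

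With this equivalence in hand, I would conclude by noting that distinct facets are labeled by distinct $z$-variables, so the map $F \mapsto \mathbf{y}^F z_F$ sends different facets of $\Delta$ to different monomials of $\F[\mathbf{x}, \mathbf{y}, \mathbf{z}]$. Since $<$ restricts to a total order on any set of pairwise distinct monomials, the above equivalence transports this total order back to $\mathcal{F}(\Delta)$, giving precisely the relation $\prec$ defined in the statement. Trichotomy (ensuring $\prec$ is defined between any two distinct facets) and transitivity are then inherited directly from $<$; reflexivity is built in by definition.

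There is no real obstacle here beyond bookkeeping: the only thing that could have caused trouble, namely the exponents on the two sides of $b_{F_k, F_\ell}$ being complicated, is neutralized by the observation that completing each side to $\mathbf{y}^{F_k} z_{F_k}$ versus $\mathbf{y}^{F_\ell} z_{F_\ell}$ is a single multiplication by the common factor $\mathbf{y}^{F_k \cap F_\ell}$.
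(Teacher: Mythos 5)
Your proposal is correct and is essentially the paper's argument: the paper likewise compares facets by multiplying $b_{F_k,F_\ell}$ by the common factor $\mathbf{y}^{F_k\cap F_\ell}$ so that $F_\ell \prec F_k$ becomes $\mathbf{y}^{F_k}z_{F_k} > \mathbf{y}^{F_\ell}z_{F_\ell}$, from which totality and transitivity follow. Your packaging of this as pulling back $<$ along the injective map $F \mapsto \mathbf{y}^{F}z_F$ is just a cleaner restatement of the same computation.
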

		\begin{proof}
	    By definition, for any two distinct facets $F_k$ and $F_{\ell}$ of $\Delta$ at most one of $F_{\ell}\prec F_{k}$ and $F_{k}\prec F_{\ell}$ holds. To conclude that $\prec$ is a partial order, we need to prove transitivity.
	    
	    Let $F_j$, $F_k$, $F_{\ell}$ be facets such that $F_{j} \succ F_k$, $F_k \succ F_{\ell}$. We claim that $\lt_<(b_{F_{j}, F_{\ell}}) = \mathbf{y}^{F_{j} \setminus F_{\ell}}z_{F_{j}}$.
	    
	    Indeed, multiplying $b_{F_j, F_k}$ by $\mathbf{y}^{F_j \cap F_k}$ yields that $\mathbf{y}^{F_j}z_{F_j} > \mathbf{y}^{F_k}z_{F_k}$; analogously, one has that $\mathbf{y}^{F_k}z_{F_k} > \mathbf{y}^{F_{\ell}}z_{F_{\ell}}$, and hence $\mathbf{y}^{F_j}z_{F_j} > \mathbf{y}^{F_k}z_{F_k} > \mathbf{y}^{F_{\ell}}z_{F_{\ell}}$. Since $\mathbf{y}^{F_j}z_{F_j} - \mathbf{y}^{F_{\ell}}z_{F_{\ell}} = \mathbf{y}^{F_j \cap F_{\ell}}b_{F_j, F_{\ell}}$, the claim follows.
	\end{proof}
	
	\begin{defi}
	    If $<$ is a term order on $\F[\mathbf{x},\mathbf{y},\mathbf{z}]$, we will denote by $\prec$ be the unique total order on $\mathcal{F}(\Delta)$ induced by $<$ as in \Cref{lem: term order induce alla luce}.  Conversely, if $\prec$ is any total order on $\mathcal{F}(\Delta)$, we will say that a term order $<$ on $\F[\mathbf{x},\mathbf{y},\mathbf{z}]$ is \emph{compatible} with $\prec$ if $\lt_<(b_{F_k, F_{\ell}}) = \mathbf{y}^{F_k \setminus F_{\ell}}z_{F_k}$ whenever $F_{\ell} \prec F_k$.
	\end{defi}

\begin{lem} \label{lem:single_reduction}
Fix a term order $<$ on $\F[\mathbf{x},\mathbf{y},\mathbf{z}]$, let $\prec$ be the associated total order on $\mathcal{F}(\Delta)$ and let $F_1$ and $F_2$ be facets of $\Delta$ with $F_1 \succ F_2$. Then the binomial $b_{F_1, F_2}$ can be reduced via the binomial $b_{G_1, G_2}$ if and only if $\{G_1, G_2\} = \{F_1, H\}$ for some facet $H$ such that $H \prec F_1$ and $F_1 \cap F_2 \subseteq H$. If this is the case, the binomial $b_{F_1, F_2}$ reduces to $\mathbf{y}^{F_2 \cap (H \setminus F_1)}b_{H, F_2}$.    
\end{lem}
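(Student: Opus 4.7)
The plan is to translate the reducibility condition into a divisibility requirement on leading terms, and then to carry out the single reduction step and identify the result as a multiple of another binomial $b_{H,F_2}$.

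First I would recall that because $F_1 \succ F_2$, compatibility of $<$ with $\prec$ forces $\lt_<(b_{F_1,F_2}) = \mathbf{y}^{F_1 \setminus F_2}z_{F_1}$. For $b_{G_1,G_2}$ to reduce $b_{F_1,F_2}$ I may assume (after swapping) that $G_1 \succ G_2$, so $\lt_<(b_{G_1,G_2}) = \mathbf{y}^{G_1 \setminus G_2}z_{G_1}$, and I need this monomial to divide $\mathbf{y}^{F_1 \setminus F_2}z_{F_1}$. The $z$-part is squarefree of degree one, so this immediately forces $z_{G_1} = z_{F_1}$, i.e.\ $G_1 = F_1$. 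The remaining $\mathbf{y}$-divisibility $\mathbf{y}^{F_1 \setminus G_2} \mid \mathbf{y}^{F_1 \setminus F_2}$ is equivalent to $F_1 \setminus G_2 \subseteq F_1 \setminus F_2$, that is, to $F_1 \cap F_2 \subseteq G_2$. Setting $H := G_2$ gives exactly the stated condition $H \prec F_1$ and $F_1 \cap F_2 \subseteq H$; conversely, any such $H$ yields a valid reducer $b_{F_1,H}$.

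It then remains to compute the single-step reduction
\[
    b_{F_1,F_2} \;-\; \mathbf{y}^{(F_1 \setminus F_2)\cap H}\, b_{F_1,H}.
\]
Writing the binomials out, the leading term $\mathbf{y}^{F_1 \setminus F_2} z_{F_1}$ cancels (the key identity is that $(F_1 \setminus F_2) \cap H$ and $F_1 \setminus H$ are disjoint with union $F_1 \setminus F_2$, which uses precisely $F_1 \cap F_2 \subseteq H$). What remains is a binomial supported on $z_H$ and $z_{F_2}$, and the main bookkeeping task is to check that its two $\mathbf{y}$-exponents coincide with those of $\mathbf{y}^{F_2 \cap (H \setminus F_1)}\,b_{H,F_2}$. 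For the $z_H$-term the exponent set is $((F_1 \setminus F_2) \cap H) \cup (H \setminus F_1)$, which I would simplify to $H \setminus (F_1 \cap F_2)$; for the $z_{F_2}$-term the exponent set $F_2 \setminus F_1$ has to be rewritten as $(F_2 \cap (H \setminus F_1)) \sqcup (F_2 \setminus H)$, which again holds because $F_2 \cap F_1 \subseteq H$.

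The proof is thus essentially a matching problem on the leading terms followed by a disjoint-union identity on subsets of $[n]$. I do not expect any real obstacle: the only delicate point is the set-theoretic verification in the last step, where the hypothesis $F_1 \cap F_2 \subseteq H$ is used twice — once to rewrite $F_1 \setminus F_2$ as $((F_1 \setminus F_2) \cap H) \sqcup (F_1 \setminus H)$ so that the $z_{F_1}$-terms cancel, and once to split $F_2 \setminus F_1$ along $H$ so that the coefficient $\mathbf{y}^{F_2 \cap (H \setminus F_1)}$ factors out of the remainder.
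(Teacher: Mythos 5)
Your proposal is correct and follows essentially the same route as the paper: both arguments identify the reducers by requiring $\lt_<(b_{G_1,G_2})$ to divide the leading term $\mathbf{y}^{F_1\setminus F_2}z_{F_1}$ (forcing $G_1=F_1$, $H\prec F_1$ and $F_1\cap F_2\subseteq H$), and then carry out the single reduction step explicitly; note that your multiplier $\mathbf{y}^{(F_1\setminus F_2)\cap H}$ is literally the paper's $\mathbf{y}^{F_1\cap(H\setminus F_2)}$, so the computation identifying the remainder with $\mathbf{y}^{F_2\cap(H\setminus F_1)}b_{H,F_2}$ is the same set-theoretic bookkeeping, using $F_1\cap F_2\subseteq H$ exactly as in the paper.
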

\begin{proof}
Since $F_1 \succ F_2$, the leading term of $b_{F_1, F_2}$ is $\mathbf{y}^{F_1 \setminus F_2}z_{F_1}$. We can operate a reduction via $b_{G_1, G_2}$ precisely when the leading term of $b_{G_1, G_2}$ divides $\mathbf{y}^{F_1 \setminus F_2}z_{F_1}$. In order for this to happen, $b_{G_1, G_2}$ must be (up to sign) $b_{F_1, H}$, where $H$ is a facet of $\Delta$. For $z_{F_1}$ to appear in the leading term of $b_{F_1, H}$, it must be that $F_1 \succ H$. Moreover, we need $\mathbf{y}^{F_1 \setminus H}$ to divide $\mathbf{y}^{F_1 \setminus F_2}$, i.e.~$F_1 \setminus H \subseteq F_1 \setminus F_2$, which happens precisely when $F_1 \cap F_2 \subseteq H$ (note that this in turn equivalent to $F_2 \setminus H \subseteq F_2 \setminus F_1$). One checks that all these necessary conditions are indeed sufficient to get the desired reduction. When such conditions are met, one has that
\[
\begin{split}
b_{F_1, F_2} &= \mathbf{y}^{F_1 \setminus F_2}z_{F_1} - \mathbf{y}^{F_2 \setminus F_1}z_{F_2}\\
&= \mathbf{y}^{F_1 \cap (H \setminus F_2)}\mathbf{y}^{F_1 \setminus H}z_{F_1} - \mathbf{y}^{F_2 \cap (H \setminus F_1)}\mathbf{y}^{F_2 \setminus H}z_{F_2}\\
&= \mathbf{y}^{F_1 \cap (H \setminus F_2)}(\mathbf{y}^{F_1 \setminus H}z_{F_1} - \mathbf{y}^{H \setminus F_1}z_{H}) + \mathbf{y}^{F_1 \cap (H \setminus F_2)}\mathbf{y}^{H \setminus F_1}z_{H} - \mathbf{y}^{F_2 \cap (H \setminus F_1)}\mathbf{y}^{F_2 \setminus H}z_{F_2}\\
&= \mathbf{y}^{F_1 \cap (H \setminus F_2)}b_{F_1, H} + \mathbf{y}^{F_2 \cap (H \setminus F_1)}\mathbf{y}^{H \setminus F_2}z_{H} - \mathbf{y}^{F_2 \cap (H \setminus F_1)}\mathbf{y}^{F_2 \setminus H}z_{F_2}\\
&= \mathbf{y}^{F_1 \cap (H \setminus F_2)}b_{F_1, H} + \mathbf{y}^{F_2 \cap (H \setminus F_1)}b_{H, F_2}.
\end{split}
\]
\end{proof}

For the rest of the section we let
\[
    \mathcal{Q}:=\{b_{F_i,F_j}~:~ \dim(F_i\cap F_j)=d-2\}.
\]
Observe that the condition $\dim(F_i\cap F_j)=d-2$ implies that all binomials in $\mathcal{Q}$ are quadratic. Conversely, for every binomial $b_{F_i,F_j}$ with $\deg(b_{F_i,F_j})=2$ we have that $\dim(F_i\cap F_j)=d-2$.

We now establish a technical lemma that will be crucial in the proof of \Cref{thm: shellable iff qgb}.

	\begin{lem}\label{lem: reduce S}
        Fix a term order $<$ on $\F[\mathbf{x},\mathbf{y},\mathbf{z}]$ and let $\prec$ be the associated total order on $\mathcal{F}(\Delta)$. Let $b_{F,F_1},b_{F,F_2}\in \mathcal{Q}$ be such that $\lt(b_{F,F_1})=y_{F\setminus F_1}z_{F}$ and $\lt(b_{F,F_2})=y_{F\setminus F_2}z_{F}$. Then $S(b_{F,F_1},b_{F,F_2})$ reduces to zero modulo $\mathcal{Q}$ if and only if there exists a sequence of facets $F_1=F^{(1)},\dots,F^{(p)}=F_2$ such that
        \begin{enumerate}
            \item $F_1\cap F_2\subset F^{(i)}$ for every $1\leq i \leq p$;
            \item there exists $1\leq c\leq p$ such that
            \[
                F^{(1)}\succ\dots\succ F^{(c)}\prec F^{(c+1)}\prec\dots \prec F^{(p)}.
            \]
            with $\dim(F^{(i)}\cap F^{(i+1)})=d-2$, for every $1\leq i \leq p-1$.\\
        \end{enumerate}
    \end{lem}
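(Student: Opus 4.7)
My approach is to reduce the S-polynomial computation to one concerning a single (not necessarily quadratic) binomial, and then exploit a telescoping identity governed by the zigzag. Setting $\{a\} = F \setminus F_1$ and $\{b\} = F \setminus F_2$, a direct calculation
\[
S(b_{F,F_1}, b_{F,F_2}) = y_b\, b_{F,F_1} - y_a\, b_{F,F_2}
\]
(together with its degenerate analogue when $a = b$) yields, after cancellation, an identity of the shape $S(b_{F,F_1}, b_{F,F_2}) = \pm\, \mathbf{y}^u\, b_{F_1, F_2}$ for some $y$-monomial $\mathbf{y}^u$, where the exponent depends on the three cases $a=b$, $a\neq b$ with $F_1 \setminus F = F_2 \setminus F$, and $a\neq b$ with $F_1 \setminus F \neq F_2 \setminus F$. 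Consequently $S(b_{F,F_1}, b_{F,F_2})$ reduces to $0$ modulo $\mathcal{Q}$ if and only if $b_{F_1, F_2}$ does, and the remainder of the proof focuses on this reformulated statement.

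For the implication $(\Leftarrow)$, the main ingredient is the telescoping identity $\mathbf{y}^{F^{(i)}} z_{F^{(i)}} - \mathbf{y}^{F^{(i+1)}} z_{F^{(i+1)}} = \mathbf{y}^{F^{(i)} \cap F^{(i+1)}}\, b_{F^{(i)}, F^{(i+1)}}$; summing over $i$ and using that $F_1 \cap F_2 \subseteq F^{(i)} \cap F^{(i+1)}$ for every $i$, one obtains
\[
b_{F_1, F_2} \;=\; \sum_{i=1}^{p-1} \mathbf{y}^{(F^{(i)} \cap F^{(i+1)}) \setminus (F_1 \cap F_2)}\, b_{F^{(i)}, F^{(i+1)}},
\]
which expresses $b_{F_1, F_2}$ as a $y$-monomial combination of elements of $\mathcal{Q}$. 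To verify that this is a legitimate Gr\"obner basis reduction it suffices to check that every summand has leading term bounded above by $\lt(b_{F_1, F_2})$. Assuming $F_1 \succ F_2$ without loss of generality, one computes that summand $i$ has leading term $\mathbf{y}^{F^{(i)} \setminus (F_1 \cap F_2)} z_{F^{(i)}}$ on the descending segment $i < c$ and $\mathbf{y}^{F^{(i+1)} \setminus (F_1 \cap F_2)} z_{F^{(i+1)}}$ on the ascending segment $i \geq c$; the monotonicity $F_1 \succ \cdots \succ F^{(c)} \prec \cdots \prec F^{(p)} = F_2$ then yields a $\prec$-decreasing chain of leading terms starting at $\lt(b_{F_1, F_2}) = \mathbf{y}^{F_1 \setminus F_2} z_{F_1}$ along the descending part and a $\prec$-increasing chain along the ascending part that tops out at $\mathbf{y}^{F_2 \setminus F_1} z_{F_2}$, the trailing term of $b_{F_1, F_2}$, which is strictly below $\lt(b_{F_1, F_2})$.

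For the implication $(\Rightarrow)$, I would unwind the reduction of $b_{F_1, F_2}$ modulo $\mathcal{Q}$ step by step using \Cref{lem:single_reduction}. Each reduction step rewrites the current binomial $b_{G_1, G_2}$ (with $G_1 \succ G_2$) as a $y$-monomial multiple of $b_{H, G_2}$, where $H \prec G_1$, $\dim(G_1 \cap H) = d-2$, and $G_1 \cap G_2 \subseteq H$; iterating with $G_2 = F_2$ fixed produces a strictly $\prec$-decreasing chain of facets $F_1 = G_1^{(0)} \succ G_1^{(1)} \succ \cdots$, each containing $F_1 \cap F_2$. When this process reaches a binomial lying in $\mathcal{Q}$, either the chain is purely descending and a one-step finish closes the zigzag at $F_2$, or the leading term has flipped to the $F_2$-side and the same argument applied from $F_2$ yields a second $\prec$-descending chain that, read backwards, becomes the ascending leg meeting the first chain at the valley $F^{(c)}$. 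The main obstacle is verifying the leading-term bound in the $(\Leftarrow)$ direction near the transition $i=c$ and, symmetrically, arguing in $(\Rightarrow)$ that the reduction sequence flips sides exactly once; both points hinge on unwinding the definition of $\prec$ via leading terms of quadratic binomials and cancelling the common $y$-factors between consecutive summands.
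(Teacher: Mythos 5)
Your left-to-right reduction of the problem and your proof of the ``if'' direction are essentially sound, and the latter is genuinely different from (and slicker than) the paper's argument: instead of exhibiting an explicit sequence of division steps, you produce the closed-form telescoping expression $\mathbf{y}^{F_1\cap F_2}b_{F_1,F_2}=\sum_{i=1}^{p-1}\mathbf{y}^{F^{(i)}\cap F^{(i+1)}}b_{F^{(i)},F^{(i+1)}}$, cancel $\mathbf{y}^{F_1\cap F_2}$ (legitimate, as the ring is a domain and $F_1\cap F_2\subseteq F^{(i)}\cap F^{(i+1)}$), and check the leading-term bound. That bound does hold, and the transition at $i=c$ is not actually an obstacle: for any facet $A\supseteq F_1\cap F_2$ with $A\prec F_1$, multiplying the defining inequality of $\prec$ by $\mathbf{y}^{(A\cap F_1)\setminus(F_1\cap F_2)}$ gives $\mathbf{y}^{A\setminus(F_1\cap F_2)}z_A<\lt(b_{F_1,F_2})$, and every facet on the ascending leg is $\preceq F_2\prec F_1$. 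Two small points to tidy up: (a) your claim that $S(b_{F,F_1},b_{F,F_2})$ reduces to zero iff $b_{F_1,F_2}$ does needs the observation that in the only nontrivial case (codimension-two intersection) the extraneous $\mathbf{y}$-factor is empty, since ``monomial multiple reduces to zero'' does not in general imply the same for the original binomial; (b) a sum with all summands' leading terms bounded by $\lt(b_{F_1,F_2})$ is a standard expression rather than literally a reduction to zero, but your telescoping sum converts immediately into an honest reduction (the intermediate remainders are the two-term polynomials $\mathbf{y}^{F^{(j)}\setminus(F_1\cap F_2)}z_{F^{(j)}}-\mathbf{y}^{F^{(k)}\setminus(F_1\cap F_2)}z_{F^{(k)}}$ as you cancel from both ends), so this is cosmetic.

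The genuine gap is in the ``only if'' direction. You unwind the reduction ``with $G_2=F_2$ fixed'', obtaining a single $\prec$-descending chain from $F_1$, and you then need --- and explicitly leave open --- the claim that ``the reduction sequence flips sides exactly once.'' That claim is false in general, so it cannot be the missing step. By \Cref{lem:single_reduction}, each reduction step replaces the current pair $\{A,B\}$ by $\{\min_{\prec}\{A,B\},H\}$, where $H\prec\max_{\prec}\{A,B\}$ is attached to $\max_{\prec}\{A,B\}$ in codimension one and contains $A\cap B$; in particular the step always acts on whichever endpoint is currently $\prec$-larger, and this can alternate between the $F_1$-side and the $F_2$-side arbitrarily many times (already when the order interleaves as $F_1\succ F_2\succ H_1\succ H_2\succ\cdots$ the first step reduces $F_1$, the second reduces $F_2$, the third is back on the $F_1$-chain). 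The correct bookkeeping --- which is what the paper's proof does --- is not to count flips but to record that the facets produced form \emph{two} $\prec$-descending chains, one hanging from $F_1$ and one from $F_2$, built in an interleaved fashion, with consecutive members of each chain meeting in codimension one and all members containing $F_1\cap F_2$; the process stops when the two current bottoms meet in codimension one, and concatenating the $F_1$-chain downwards with the $F_2$-chain upwards yields the zigzag with its single valley, independently of the order in which the two chains were extended. One further point you should verify when iterating \Cref{lem:single_reduction}: the intermediate polynomials are $\mathbf{y}$-monomial multiples of binomials $b_{A,B}$, and one needs the accumulated monomial factors to stay disjoint from $F_1\cap F_2$ (they do, since each new factor is supported outside $\max_{\prec}\{A,B\}\supseteq F_1\cap F_2$) in order to conclude that every facet used in a reduction still contains $F_1\cap F_2$.
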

    
    \begin{figure}
        \centering
        \includegraphics{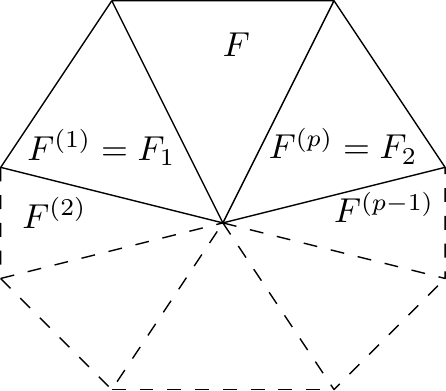}
        \caption{An illustration of \Cref{lem: reduce S}.}
        \label{fig:my_label}
    \end{figure}
    
    \begin{proof}
   The facets $F_1$ and $F_2$ must meet in codimension one or two, and the proof of \Cref{prop:uni gb} yields that $S(b_{F,F_1},b_{F,F_2}) = \mathbf{y}^{(F_1 \cap F_2) \setminus F}b_{F_2,F_1}$.
        
        If $F_1$ and $F_2$ meet in codimension one, then $b_{F_2,F_1}\in\mathcal{Q}$. In particular, $S(b_{F,F_1},b_{F,F_2})$ reduces to zero modulo $\mathcal{Q}$, and the sequence of facets consisting of $F_1$ and $F_2$ always satisfies the hypotheses.
        
        From now on we will hence assume $F_1$ and $F_2$ meet in codimension two. Then $F_1 \cap F_2 \subseteq F$ and thus $S(b_{F,F_1},b_{F,F_2}) = b_{F_2,F_1}$. Assume without loss of generality that $F_1 \succ F_2$.
        
        \begin{itemize}
        \item[\textbf{Only if}:] We assume that $S(b_{F,F_1},b_{F,F_2}) = b_{F_2, F_1}$ reduces to zero modulo $\mathcal{Q}$ and prove the existence of a sequence of facets satisfying the hypotheses.
        By hypothesis, $b_{F_2, F_1} \notin \mathcal{Q}$ becomes zero after applying finitely many reductions via binomials in $\mathcal{Q}$. Let $(b_1, \ldots, b_{p-1})$ (with $p \geq 3$) be the shortest possible sequence of binomials used to reduce $b_{F_2, F_1}$ to zero.
        By \Cref{lem:single_reduction}, $b_{F_2, F_1}$ can be reduced via a binomial in $\mathcal{Q}$ if and only if such a binomial is (up to sign) $b_{F_1, H_1}$, where $H_1$ is a facet meeting $F_1$ in codimension one and such that $F_1 \succ H_1$ and $F_1 \cap F_2 \subseteq H_1$. Moreover, such reduction produces a $y$-multiple of $b_{F_2, H_1}$; hence, if $p=3$, then $F_2$ and $H_1$ must meet in codimension one and $(F_1, H_1, F_2)$ is the desired chain.
        If instead $p>3$, then $F_2$ and $H_1$ must meet in codimension two (since $F_1 \cap F_2 \subseteq H_1 \cap F_2)$. Applying \Cref{lem:single_reduction} to (the given $y$-multiple of) $b_{F_2, H_1}$, we find another facet $H_2$ meeting $\max\{H_1, F_2\}$ in codimension one and such that $\max\{H_1, F_2\} \succ H_2$ and $H_1 \cap F_2 \subseteq H_2$. Moreover, one also has that $F_1 \cap F_2 \subseteq H_1 \cap F_2 \subseteq H_2$, as requested.
        The reduction step produces a $y$-multiple of $b_{\min\{H_1, F_2\}, H_2}$; reasoning as before, if $p=4$ then $\min\{H_1, F_2\}$ and $H_2$ must meet in codimension one and we are done, otherwise we continue the process until we reach the end of the reducing sequence.
        \item[\textbf{If}:] For the reverse implication, assume that we are given a sequence of facets $F^{(1)}\succ\dots\succ F^{(c)}\prec F^{(c+1)}\prec\dots \prec F^{(p)}$ satisfying the hypotheses. Then one gets the desired sequence of binomials by the following algorithm:

\begingroup        
\allowdisplaybreaks

\vspace{10pt}

\noindent $i\gets 1$, $j \gets 1$, $k \gets p$, $b_1, \ldots, b_{p-1} \gets 0$\hfill\\ 
\textbf{while} $i<p-1$ \textbf{and} $\dim(F^{(j)} \cap F^{(k)}) = d-3$ \hfill\\
    \text{\hspace{30pt}}\textbf{if} $\max\{F^{(j)}, F^{(k)}\} = F^{(j)}$\hfill\\
    \text{\hspace{30pt}}\text{\hspace{30pt}}$b_i \gets b_{F^{(j)}, F^{(j+1)}}$, $j\gets j+1$, $i \gets i+1$\hfill\\
    \text{\hspace{30pt}}\textbf{else}\hfill\\
    \text{\hspace{30pt}}\text{\hspace{30pt}}$b_i \gets b_{F^{(k-1)}, F^{(k)}}$, $k\gets k-1$, $i \gets i+1$\hfill\\
    \text{\hspace{30pt}}\textbf{end if}\\
\textbf{end while}\\
$b_i \gets b_{F^{(j)}, F^{(k)}}$\hfill\\
\textbf{return} $b_1, \ldots, b_i$\hfill

\vspace{10pt}
\endgroup

\noindent Note that we are invoking \Cref{lem:single_reduction} at every iteration inside the while cycle. This can be done because, whenever $F^{(j)}$ and $F^{(k)}$ meet in codimension two, then $F^{(j)} \cap F^{(k)} = F_1 \cap F_2$, and $F_1 \cap F_2$ is contained both in $F^{(j+1)}$ and $F^{(k-1)}$ by hypothesis. Moreover, when the condition $\dim(F^{(j)} \cap F^{(k)}) = d-3$ is not met, then it must be that $F^{(j)}$ and $F^{(k)}$ meet in codimension one, and thus we can stop the process after a final reduction via $b_{F^{(j)}, F^{(k)}}$.   
\end{itemize}
        \end{proof} 
    
    We are now ready to prove the main result in this section.
    
	\emph{Proof of \Cref{thm: shellable iff qgb}}
	    We prove the two implications separately.
	    \begin{quotation}\textbf{Only if:} Let $F_1\prec\dots\prec F_{|\mathcal{F}(\Delta)|}$ be a shelling order for $\Delta$. Then the collection
	    \begin{align*}
	        C_{\Delta}~=~&\mathcal{Q}\cup \{x_iy_i : i=1,\dots, n\}\cup \{\mathbf{x}^{N_i} : N_i \text{ minimal nonface of }\Delta\} \cup\\
	        &\{z_{F_i}z_{F_j} : F_i,F_j \text{ facets of }\Delta\}\cup\{x_i z_{F_j}: F_j \text{ facet of }\Delta, \ i\notin F_j\}
	    \end{align*}
	    is a Gr\"{o}bner basis for the ideal defining $R_{\Delta}$ with respect to any term order compatible with $\prec$.
	    \end{quotation}
	    Since $\Delta$ is flag, all monomials in the collection above have degree $2$. Moreover, shellable complexes are $(S_2)$, and hence the algebra $R_{\Delta}$ is quadratic by \Cref{prop:superlevel}. In particular, the defining ideal of $R_{\Delta}$ is generated by $C_{\Delta}$, as this is the set of degree $2$ elements of the generating set described in \Cref{prop:RDelta_presentation}. It is left to the reader to check that all the $S$-polynomials obtained by comparing a monomial and a binomial in $C_{\Delta}$ reduce to zero. Hence, to prove the claim we only need to show that the $S$-polynomials of the form $S(b_1,b_2)$, with $b_1,b_2\in\mathcal{Q}$, reduce to zero. Observe that with the term order we fixed, the leading term of each binomial is determined by the $z$-variables alone. 
	    We begin by noting that if two binomials $b_1$ and $b_2$ are such that $\lt(b_1)=y_iz_F$ and $\lt(b_2)=y_jz_G$, with $F\neq G$, then $S(b_1,b_2)$ reduces to zero, even if $i=j$. This follows from the fact that both monomials of $S(b_1,b_2)$ contain a product of two $z$-variables. Since the collection $C_{\Delta}$ contains all monomials of degree $2$ in the $z$-variables, these $S$-polynomials reduce to zero.
	    
	    Consider now $S(b_{F_k,F_{\ell}},b_{F_k,F_m})$, with $k>\ell$ and $k>m$. By \Cref{lem: reduce S}, the polynomial $S(b_{F_k,F_{\ell}},b_{F_k,F_m})= \mathbf{y}^{(F_{\ell} \cap F_m) \setminus F_k}b_{F_m, F_{\ell}}$ reduces to zero modulo the collection $C_{\Delta}$ if and only if it reduces to zero modulo the collection $C_{\Delta_k}$, with $\Delta_{k}=\langle F_1,\dots,F_{k}\rangle$. 
	    
	    Let $H = F_{\ell}\cap F_{m}\in\Delta_{k-1}$ and note that $|H|\geq d-2$.
	    
	    If $|H|=d-1$, then $b_{F_{\ell},F_m}\in\mathcal{Q}$, which implies that $S(b_{F_k,F_{\ell}},b_{F_k,F_{m}})$ is a multiple of a binomial in $\mathcal{Q}$. In particular, $S(b_{F_k,F_{\ell}},b_{F_k,F_{m}})$ reduces to zero.
	    
	    Assume then that $|H|=d-2$. Since $\Delta_{k-1}$ is shellable, the $1$-dimensional simplicial complex $\lk_{\Delta_{k-1}}(H)$ is shellable, with a shelling order induced by the one of $\Delta_{k-1}$. Since shellable simplicial complexes of dimension greater than zero are connected, there exists a path in $\lk_{\Delta_{k-1}}(H)$ connecting the two edges $F_{\ell}\setminus H$ and $F_{m}\setminus H$. The edges of this path correspond to facets of $\Delta_{k-1}$ via taking their union with $H$: therefore, we can identify the path with a sequence of facets of $\Delta_{k-1}$ which all contain $H$. We can describe the support of such a path via a binary string of length $|\mathcal{F}(\Delta_{k-1})|$ whose $i$-th character is $1$ if $F_i\in P$ and $0$ if $F_i\notin P$. We consider the colexicographic order on these strings, i.e.,
	    \begin{align*}	(s_1,\dots,s_{|\mathcal{F}(\Delta_{k-1})|})<_{\text{colex}} (t_1,\dots,t_{|\mathcal{F}(\Delta_{k-1})|}) \iff s_i<t_i \text{ with } i=\max\{j: s_j\neq t_j\}.
	    \end{align*}
	    
	   It is important to observe that the path between $F_{\ell}$ and $F_{m}$ might not be unique. We choose the one $P\!: F_{\ell} = F_{i_1} \to F_{i_2} \dots \to F_{i_p} = F_m$ whose support is minimal with respect to the colexicographic order on  $2^{\mathcal{F}(\Delta_{k-1})}$.
	   We claim there exists $1\leq c\leq p$ such that $z_{F_{i_1}}>z_{F_{i_2}}>\dots>z_{F_{i_c}}<z_{F_{i_{c+1}}}<\dots<z_{F_{i_p}}$.
	   
	   Assume by contradiction there exist $z_{F_{i_r}}<z_{F_{i_s}}>z_{F_{i_t}}$, with $r<s<t$. The minimality of $P$ implies that one step before the facet $F_{i_s}\setminus H$ of $\lk_{\Delta_{k-1}}(H)$ is added via a shelling, there is no path joining the edges corresponding to $F_{i_r}$ and $F_{i_t}$, which contradicts the connectedness of shellable complexes. Indeed, if such a path existed, we would be able to replace $\{F_{i_{r+1}},\dots,F_{i_s},\ldots, F_{i_{t-1}}\}$ in the support of $P$ with a subset containing only facets which are older than $F_{i_s}$: this would lead to a set colexicographically smaller than the support of $P$, which is impossible. By \Cref{lem: reduce S}, $S(b_{F_k,F_{\ell}},b_{F_k,F_m})$ reduces to zero modulo $C_{\Delta}$.
	    \begin{quotation}
	    \textbf{If:} If $R_{\Delta}$ has a quadratic Gr\"{o}bner basis with respect to $<$, then the total order $\prec$ on $\mathcal{F}(\Delta)$ induced by $<$ is a shelling order for $\Delta$.
	    \end{quotation}
	    Assume by contradiction that the total order induced by $<$ is not a shelling order, and let $F_1 \prec \dots \prec F_{|\mathcal{F}(\Delta)|}$ be the facets in this order. This implies that there exists $2\leq i \leq |\mathcal{F}(\Delta)|$ such that $\Delta_{i-1}\cap \langle F_i\rangle$ is not pure and $(d-2)$-dimensional. In particular, there is a facet $G$ of $\Delta_{i-1}\cap \langle F_i\rangle$ with $\dim(G)\leq d-3$. We observe that $\lk_{\Delta_i}(G)$ is not connected in codimension $1$. Indeed, $G$ is not properly contained in any face of $\Delta_{i-1}\cap \langle F_i\rangle$, so no face in $\lk_{\Delta_i}(G)$ contains vertices of $\Delta_{i-1}\cap \langle F_i\rangle$. In particular, the facet $F_i\setminus G$ does not intersect any other facet of $\lk_{\Delta_{i}}(G)$. However, since $R_{\Delta}$ has a quadratic Gr\"{o}bner basis then it is Koszul, and hence by \Cref{cor: koszul_iff_cm} $ \Delta$ is Cohen--Macaulay. In particular, $\lk_{\Delta}(G)$ is Cohen--Macaulay, and hence it is connected in codimension $1$. This implies that $i<|\mathcal{F}(\Delta)|$. Let $k$ be the smallest integer in $\{i+1,\dots,|\mathcal{F}(\Delta)|\}$ such that $\lk_{\Delta_{k}}(G)$ is connected in codimension 1. Then there exist $1\leq r,s\leq k-1$ such that
	    \begin{itemize}
	        \item $G\subset F_{r}$, $G\subset F_{s}$;
            \item $(F_k\setminus G)\cap (F_r\setminus G)$ and $(F_k\setminus G)\cap (F_s\setminus G)$ are faces of codimension $1$ of $\lk_{\Delta_{k}}(G)$;
            \item $(F_r\setminus G)$ and $(F_s\setminus G)$ are not connected in codimension $1$ in $\lk_{\Delta_{k-1}}(G)$.
        \end{itemize}
        Hence $\dim(F_k\cap F_r)=\dim(F_k\cap F_s)=d-2$, which implies that $b_{F_k,F_r}, b_{F_k,F_s}\in\mathcal{Q}$. 
	    
	    We claim that $S(b_{F_k,F_r}, b_{F_k,F_s})$ does not reduce to zero modulo the set $\mathcal{Q}$. By \Cref{lem: reduce S}, the reduction to zero of this polynomial implies the existence of a sequence of facets $F_r=F^{(1)}\dots,F^{(p)}=F_s$ in $\Delta_{k-1}$ such that
	    \begin{itemize}
	    \item $F^{(t)}\preceq \max\{F_r,F_s\}$ for every $1\leq t\leq p$;
	    \item $G\subset F_r\cap F_s\subset F^{(t)}$ for every $1\leq t\leq p$;
	    \item $\dim(F^{(t)}\cap F^{(t+1)})=d-2$ for every $1\leq t\leq p-1$.
	    \end{itemize}
	    The first condition implies that $F^{(t)}\in\Delta_{k-1}$, the second yields that $F^{(t)}\setminus G$ is a facet of $\lk_{\Delta_{k-1}}(G)$, while the third implies the existence of a path of facets of $\lk_{\Delta_{k-1}}(G)$ connecting $F_r\setminus G$ and $F_s\setminus G$. However, such a sequence does not exist, as $F_r\setminus G$ and $F_s\setminus G$ are not connected in codimension $1$ in $\lk_{\Delta_{k-1}}(G)$. This proves that if $\Delta$ is not shellable, then the set $C_{\Delta}$ is not a Gr\"{o}bner basis. To conclude that $R_{\Delta}$ has no quadratic Gr\"{o}bner basis we use the fact that set $\mathcal{U}$ given in \Cref{prop:RDelta_presentation} is a universal Gr\"{o}bner basis for $R_{\Delta}$ by \Cref{prop:uni gb}, and hence every quadratic reduced Gr\"{o}bner basis must be a subset of the set of quadratic polynomials in  $\mathcal{U}$, which is precisely $C_{\Delta}$.
	\hfill $\square$
    \begin{exam}
        Let $\Delta=\langle123,234,345\rangle$ as in \Cref{ex: section 4}. Up to sign, the only two binomials in $\mathcal{Q}$ are
        \[
            f = y_4z_{234}-y_1z_{123}~ \text{and} ~ g = y_5z_{345}-y_2z_{234}.
        \]
        If we consider a term order compatible with the shelling order  $123\prec 234\prec 345$, then we have $\lt(f)=y_4z_{234}$ and $\lt(g)=y_5z_{345}$. Since the two leading terms are coprime in the $z$-variables, $S(f,g)$ reduces to zero modulo the minimal generating set of $(z_{123},z_{234},z_{345})^2$. On the other hand, if we consider a term order compatible with $123\prec 345\prec 234$, which is not a shelling order, we obtain $\lt(f)=y_4z_{234}$ and $\lt(g)=y_2z_{234}$. In this case we see that none of the monomials in $S(f,g)=y_4y_5z_{345} - y_1y_2z_{123}$ is divisible by any of the leading terms of the elements of $\mathcal{Q}$. Hence, $S(f,g)$ does not reduce to zero.
    \end{exam}

    \section{\texorpdfstring{The $\gamma$-vector of $R_{\Delta}$}{The gamma-vector of RDelta}} \label{sec: gamma}
        The aim of this section is to study the \emph{$\gamma$-vector} of the algebra $R_{\Delta}$ in terms of combinatorial properties of the simplicial complex $\Delta$. In general, the $\gamma$-vector is often the ``right'' invariant to consider when we want to highlight the information encoded in a palindromic vector, like for instance the $h$-vector of a simplicial sphere. See \cite{Bra1}, \cite{Gal}, and \cite{Ath} for more information on this topic.
        
    In particular, recall that the coefficients of the $h$-polynomial of a standard graded Gorenstein algebra form a palindromic sequence. More precisely, if the degree of the $h$-polynomial (which coincides with the Castelnuovo--Mumford regularity since the algebra is Cohen--Macaulay) equals $s$, we have that $h_i=h_{s-i}$ for every $i$. The vector space of univariate palindromic polynomials of degree $s$ has dimension $\lfloor\frac{s}{2}\rfloor+1$, and Gal proposed the use of the basis $\{t^i(t+1)^{s-2i}\}_{i=0}^{\lfloor\frac{s}{2}\rfloor}$ \cite[Section 2]{Gal}.
    \begin{defi}\label{def: gamma vector}
        The \emph{$\gamma$-vector} associated with the integer vector $h=(h_0,\dots,h_s)$ with $h_i=h_{s-i}$ is the integer vector $\gamma=(\gamma_0,\dots,\gamma_{\lfloor\frac{s}{2}\rfloor})$ defined by the identity
        \begin{equation} \label{eq: gamma_def}
        \sum_{i=0}^{\lfloor\frac{s}{2}\rfloor}\gamma_i t^i(t+1)^{s-2i}=\sum_{i=0}^{s}h_it^i.
        \end{equation}
        We denote by $\gamma(A)$ the $\gamma$-vector associated with the $h$-vector of a standard graded Gorenstein algebra $A$.
    \end{defi}
    Each $\gamma_i$ can be expressed as an integer linear combination of the $h_i$: for instance, one has that $\gamma_0=h_0$, $\gamma_1=h_1-sh_0$ and $\gamma_2=h_2-(s-2)h_1+\frac{s(s-3)}{2}h_0$. In general, the following recursive formula holds:
    \begin{equation}\label{eq: recursive}
        \gamma_i = h_i - \sum_{j=0}^{i-1}\binom{s-2j}{i-j}\gamma_j.
    \end{equation}

    For the rest of the section we will focus on the $\gamma$-vector of the standard graded Gorenstein algebra $R_{\Delta}$, with $\Delta$ a pure simplicial complex. If $\Delta$ is $(d-1)$-dimensional, then the $h$-polynomial of $R_{\Delta}$ has degree $d+1$, and we will then replace $s$ with $d+1$ in \Cref{def: gamma vector}.
    
    Perhaps surprisingly, we will show that when $R_{\Delta}$ is Koszul over $\F$ -- i.e., by \Cref{cor: koszul_iff_cm}, when $\Delta$ is Cohen--Macaulay over $\F$ -- many of the $\gamma_i$ are nonpositive.
    
    We begin with a result on the last entry of the $\gamma$-vector. Even though we will prove a more general result in \Cref{cor: gamma alternate}, it is instructive to consider this special case separately, as its proof is rather elementary.
    
	\begin{lem}\label{lem: top gamma}
	    For any $(d-1)$-dimensional Cohen--Macaulay simplicial complex $\Delta$, with $d\geq 3$ odd, we have that
	    \[
	        \gamma_{\frac{d+1}{2}}(R_{\Delta})=(-1)^{\frac{d-1}{2}}2\widetilde{\chi}(\Delta)=(-1)^{\frac{d-1}{2}}2\dim_{\F}\widetilde{H}_{d-1}(\Delta;\F).
	    \]
	     In particular, if $d\equiv 3 \text{ mod }4$, then $\gamma_{\frac{d+1}{2}}(R_{\Delta})$ is nonpositive; moreover, for any integer $c \geq 0$ and every $d\equiv 3 \text{ mod }4$, there exists a Cohen--Macaulay (even shellable) $(d-1)$-complex $\Delta$ such that $\gamma_{\frac{d+1}{2}}(R_{\Delta})=-2c$. Such a complex can be chosen to be flag.
	\end{lem}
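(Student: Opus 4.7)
The plan is to exploit the defining identity \eqref{eq: gamma_def} by setting $t = -1$. Since $s = d+1$ is even (because $d$ is odd), every term on the left-hand side of \eqref{eq: gamma_def} involves a strictly positive power of $(t+1)$ except the $j = (d+1)/2$ one, so this substitution yields
\[\gamma_{(d+1)/2}(R_{\Delta}) \;=\; (-1)^{(d+1)/2}\sum_{i=0}^{d+1}(-1)^i h_i(R_{\Delta}).\]
The problem therefore reduces to evaluating this alternating sum of the $h$-vector of $R_\Delta$ and matching it to a multiple of the reduced Euler characteristic of $\Delta$.

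For that, I would invoke the explicit description $h(R_\Delta) = (1, f_0+f_{d-1}, f_1+f_{d-2}, \ldots, f_{d-1}+f_0, 1)$ from \Cref{rem:hvector_RDelta}. Splitting the alternating sum into the contribution coming from the $f_{i-1}$ part and the one coming from the reversed $f_{d-i}$ part, then reindexing the latter via $k = d-i$ and exploiting that $d$ is odd, I expect to obtain
\[\sum_{i=0}^{d+1}(-1)^i h_i(R_\Delta) \;=\; 2 \;-\; 2\sum_{j=0}^{d-1}(-1)^j f_j(\Delta) \;=\; -2\widetilde{\chi}(\Delta),\]
where the last equality uses $\widetilde{\chi}(\Delta) = -1 + \sum_{j=0}^{d-1}(-1)^j f_j(\Delta)$. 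Multiplying by $(-1)^{(d+1)/2}$ and noting $(-1)^{(d+1)/2 + 1} = (-1)^{(d-1)/2}$ gives $\gamma_{(d+1)/2}(R_\Delta) = (-1)^{(d-1)/2}\cdot 2\widetilde{\chi}(\Delta)$, which is the first equality in the lemma.

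For the second equality, Reisner's criterion applied to the Cohen--Macaulay complex $\Delta$ forces $\widetilde{H}_i(\Delta;\F)=0$ for all $i<d-1$, so $\widetilde{\chi}(\Delta) = (-1)^{d-1}\dim_{\F}\widetilde{H}_{d-1}(\Delta;\F)$, and this coincides with $\dim_{\F}\widetilde{H}_{d-1}(\Delta;\F)$ because $d$ is odd. When $d \equiv 3 \pmod 4$, the exponent $(d-1)/2$ is odd and the leading sign is negative, which yields the nonpositivity claim.

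For the realizability part, I would take, for each $c\geq 0$, the flag complex $\Delta_c := T_{c+1} * \Sigma$, where $T_{c+1}$ is the $0$-dimensional simplicial complex on $c+1$ isolated vertices and $\Sigma$ is a flag shellable $(d-2)$-sphere (for instance the boundary of the $(d-1)$-dimensional cross-polytope, i.e.~the join of $d-1$ copies of the two-point complex). Since the join of two flag (respectively, shellable) complexes is again flag (respectively, shellable), $\Delta_c$ is a flag shellable $(d-1)$-complex; moreover, the standard formula for the reduced homology of a join over a field gives $\widetilde{H}_{d-1}(\Delta_c;\F) \cong \widetilde{H}_0(T_{c+1};\F) \otimes \widetilde{H}_{d-2}(\Sigma;\F) \cong \F^c$, and so the main identity produces $\gamma_{(d+1)/2}(R_{\Delta_c}) = -2c$. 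No substantial obstacle is expected along the way; the only mildly technical point is the bookkeeping in the reindexing of paragraph two.
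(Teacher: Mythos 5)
Your proposal is correct and follows essentially the same route as the paper: evaluate \eqref{eq: gamma_def} at $t=-1$ (only the $i=\frac{d+1}{2}$ term survives since $s=d+1$ is even), rewrite the alternating sum of $h(R_{\Delta})=(1,f_0+f_{d-1},\dots,f_{d-1}+f_0,1)$ as $-2\widetilde{\chi}(\Delta)$ using that $d$ is odd, and then use Cohen--Macaulayness to identify $\widetilde{\chi}(\Delta)$ with $\dim_{\F}\widetilde{H}_{d-1}(\Delta;\F)$. The only deviation is in the realizability construction, where the paper glues $c$ copies of the boundary of the $d$-dimensional cross-polytope along a common facet whereas you take the join of $c+1$ isolated vertices with the boundary of the $(d-1)$-dimensional cross-polytope; both families are flag, shellable and have $\dim_{\F}\widetilde{H}_{d-1}=c$, so your variant is equally valid.
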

	
	\begin{proof}
	    Let $f(\Delta)=(1,f_0,\dots,f_{d-1})$ be the $f$-vector of $\Delta$ and recall that, as noted in \Cref{rem:hvector_RDelta}, $h(R_{\Delta})=(1,f_0+f_{d-1},f_1+f_{d-2},\dots,f_0+f_{d-1},1)$. Evaluating \eqref{eq: gamma_def} at $t=-1$ yields
	    \[
	    (-1)^{\frac{d+1}{2}}\gamma_{\frac{d+1}{2}}(R_{\Delta}) = \sum_{i=0}^{d+1}(-1)^ih_i(R_{\Delta}) = 2 + \sum_{i=1}^{d}(-1)^i(f_{i-1}+f_{d-i}) = -2\widetilde{\chi}(\Delta).
	    \]
	    It is a standard fact from algebraic topology that the reduced Euler characteristic can be written as the alternating sum of the dimensions of the reduced homology groups of $\Delta$. Since $\Delta$ is Cohen--Macaulay, one has that $\widetilde{H}_{j}(\Delta;\F)=0$ for every $0\leq j<d-1$, and thus $\gamma_{\frac{d+1}{2}}(R_{\Delta})=(-1)^{\frac{d-1}{2}}2\dim_{\F}\widetilde{H}_{d-1}(\Delta;\F)$.
	    
	    To prove the last claim it is enough to exhibit a $(d-1)$-dimensional  Cohen--Macaulay flag complex with $\widetilde{\chi}(\Delta)=c$. For instance, we can glue together $c$ copies of the boundary complex of the $d$-dimensional cross-polytope so that they all share the same single facet. It is elementary to check that this simplicial complex has the desired properties, and that it is indeed even shellable.
	\end{proof}
	
	In their collection \cite{PeSt} of problems on syzygies and Hilbert functions, Peeva and Stillman suggest that it might make sense to consider a Charney--Davis-like conjecture for (not necessarily monomial) Koszul Gorenstein algebras:
	\begin{ques}[{\cite[Problem 10.3]{PeSt}}] \label{qu:PeSt}
	    Let $S/I$ be a Koszul Gorenstein algebra with $h$-vector $(h_0, h_1, \ldots, h_{2e})$. Is it true that $(-1)^e(h_0 - h_1 + h_2 - \ldots + h_{2e}) \geq 0$?
	\end{ques}
	Notice that $(-1)^e(h_0 - h_1 + h_2 - \ldots + h_{2e}) = \gamma_{e}(S/I)$. \Cref{lem: top gamma} and \Cref{cor: koszul_iff_cm} immediately yield a negative answer to \Cref{qu:PeSt}:
	\begin{coro}
	    Let $d\equiv 3 \text{ mod }4$, $c \in \mathbb{Z}_+$. Then there exists a Koszul Gorenstein algebra with $h$-vector $(h_0, h_1, \dots, h_{d+1})$ and such that $(-1)^{\frac{d+1}{2}}(h_0 - h_1 + h_2 - \ldots + h_{2e}) = -2c < 0$. Such an algebra is of the form $R_{\Delta}$, where $\Delta$ is a flag Cohen--Macaulay $(d-1)$-dimensional complex with $\dim_{\F}\widetilde{H}_{d-1}(\Delta;\F) = c$.
	\end{coro}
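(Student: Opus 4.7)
The plan is to assemble this corollary directly from the ingredients already in place, without any new computation. Given $d \equiv 3 \bmod 4$ and $c \in \mathbb{Z}_+$, the first move is to invoke the construction at the end of \Cref{lem: top gamma}: take $\Delta$ to be the flag $(d-1)$-dimensional simplicial complex obtained by gluing $c$ copies of the boundary of the $d$-dimensional cross-polytope along a common facet. This $\Delta$ is shellable (hence Cohen--Macaulay over every field), flag (since each boundary of a cross-polytope is flag and flagness is preserved under identifying along a face), and satisfies $\widetilde{\chi}(\Delta) = c$; because $\Delta$ is Cohen--Macaulay of dimension $d-1$, all its reduced homology is concentrated in top degree, so $\dim_{\F} \widetilde{H}_{d-1}(\Delta; \F) = c$.

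Next, I would pass to $R_{\Delta}$. By construction $R_{\Delta}$ is a standard graded Gorenstein $\F$-algebra (\Cref{thm:idealization}), and by \Cref{cor: koszul_iff_cm} it is Koszul since $\Delta$ is Cohen--Macaulay over $\F$. Its $h$-vector has length $d+2$ as recorded in \Cref{rem:hvector_RDelta}, so it has the form $(h_0, h_1, \ldots, h_{d+1})$ with $e = \frac{d+1}{2}$ in the notation of \Cref{qu:PeSt}.

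Finally, the signed alternating sum is computed from \Cref{lem: top gamma}: writing $e = \frac{d+1}{2}$, one has
\[
(-1)^{e}(h_0 - h_1 + h_2 - \cdots + h_{2e}) \;=\; \gamma_{\frac{d+1}{2}}(R_{\Delta}) \;=\; (-1)^{\frac{d-1}{2}} \cdot 2 \dim_{\F}\widetilde{H}_{d-1}(\Delta; \F) \;=\; -2c,
\]
where the last equality uses that $d \equiv 3 \bmod 4$ makes $\frac{d-1}{2}$ odd. This produces the required negative value, simultaneously providing a counterexample to \Cref{qu:PeSt}.

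The argument involves no real obstacle: all the hard work has been done, so this is essentially a bookkeeping corollary. The only tiny point to double-check is that the gluing construction in \Cref{lem: top gamma} genuinely preserves flagness (the minimal nonfaces of the glued complex are precisely the union of the minimal nonfaces of each copy, all of which have size two) and shellability (a shelling can be obtained by concatenating shellings of each cross-polytope boundary so that the common facet comes first in each subsequent piece). Both are routine, so the proof reduces to citing the two preceding results and reading off $-2c$.
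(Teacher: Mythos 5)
Your proposal is correct and follows exactly the route the paper intends: the corollary is stated there as an immediate consequence of \Cref{lem: top gamma} (which already supplies the flag, shellable, hence Cohen--Macaulay complex with $\gamma_{\frac{d+1}{2}}(R_{\Delta}) = -2c$, via gluing $c$ cross-polytope boundaries along a facet) together with \Cref{cor: koszul_iff_cm} and the identity $(-1)^e(h_0-h_1+\cdots+h_{2e})=\gamma_e$. Your extra checks on flagness and shellability of the glued complex are harmless but already covered by the cited lemma.
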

	    
	\begin{exam} \label{ex: algebraic CD counterexample}
	Let $\Delta$ be the boundary of the $3$-dimensional cross-polytope (or octahedron). Since $f(\Delta) = (1, 6, 12, 8)$, one has that $h(R_{\Delta}) = (1, 14, 24, 14, 1)$ and hence $\gamma_2(R_{\Delta}) = 1 - 14 + 24 - 14 + 1 = -2 < 0$. Labeling the antipodal pairs of vertices of $\Delta$ by $\{1,2\}$, $\{3,4\}$ and $\{5,6\}$ yields that $R_{\Delta}$ is presented as the quotient of the polynomial ring in $2f_0(\Delta)+f_2(\Delta)=20$ variables
	\[\F[x_1, x_2, x_3, x_4, x_5, x_6, y_1, y_2, y_3, y_4, y_5, y_6, z_{135}, z_{136}, z_{145}, z_{146}, z_{235}, z_{236}, z_{245}, z_{246}]\] by the ideal with 81 quadratic generators
	\begingroup
	\allowdisplaybreaks
	\begin{align*}
	(&x_1x_2, x_3x_4, x_5x_6, x_1y_1, x_2y_2, x_3y_3, x_4y_4, x_5y_5, x_6y_6)\\
	+\ (&z_{135}, z_{136}, z_{145}, z_{146}, z_{235}, z_{236}, z_{245}, z_{246})^2\\
	+\ (&x_2z_{135}, x_4z_{135}, x_6z_{135}, x_2z_{136}, x_4z_{136}, x_5z_{136}, x_2z_{145}, x_3z_{145}, x_6z_{145}, x_2z_{146}, x_3z_{146}, x_5z_{146},\\
	&x_1z_{235}, x_4z_{235}, x_6z_{235}, x_1z_{236}, x_4z_{236}, x_5z_{236}, x_1z_{245}, x_3z_{245}, x_6z_{245}, x_1z_{246}, x_3z_{246}, x_5z_{246})\\
	+\ (&y_5z_{135} - y_6z_{136}, y_3z_{135} - y_4z_{145}, y_1z_{135} - y_2z_{235}, y_3z_{136} - y_4z_{146}, y_1z_{136} - y_2z_{236}, y_5z_{145} - y_6z_{146},\\ &y_1z_{145} - y_2z_{245}, y_1z_{146} - y_2z_{246}, y_5z_{235} - y_6z_{236}, y_3z_{235} - y_4z_{245}, y_3z_{236} - y_4z_{246}, y_5z_{245} - y_6z_{246}).
	\end{align*}
	\endgroup
	Being the boundary complex of a simplicial polytope, the simplicial complex $\Delta$ is shellable. Therefore, by \Cref{thm: shellable iff qgb} the $81$ generators listed above are a Gr\"{o}bner basis with respect to any term order compatible with any shelling order. We remark that, by Propositions \ref{prop: ADelta from RDelta} and \ref{prop: ADelta qgb}, one can get an analogous Artinian example with 14 variables by going modulo the regular sequence of linear forms $y_1 - x_1, \ldots, y_6 - x_6$. 
	\end{exam}
	
	\begin{rema}
	    \Cref{ex: algebraic CD counterexample} also provides a Koszul Gorenstein algebra which is not PF (in the sense of \cite{ReWe}) and has Castelnuovo--Mumford regularity $4$. By \cite[Corollary 4.14]{ReWe}, all Koszul Gorenstein algebras of regularity at most $3$ are PF and hence have the Charney--Davis (CD) property.
	\end{rema}
	
	\begin{exam}
	    Let $\Delta$ be the flag triangulation of $\mathbb{RP}^2$ described in \Cref{ex: RP2}. We have observed that $h(R_{\Delta})=(1,31,60,31,1)$, and hence $\gamma_2(R_{\Delta})=0$. This is in line with \Cref{lem: top gamma}, as $\dim_{\mathbb{F}}\widetilde{H}_2(\mathbb{RP}^2;\mathbb{F})=0$ for every field $\mathbb{F}$ over which $\Delta$ is Cohen--Macaulay, i.e., every field of characteristic different than $2$.
	\end{exam}
	For the rest of the section we stipulate that a binomial coefficient $\binom{n}{k}$ is equal to zero whenever $k<0$ or $n<k$. Moreover, whenever $r \geq 2i \geq 0$, we set
	
	\[\ell_{r, i} :=
	\begin{cases}
	2 & \text{if }(r,i)=(0,0)\\
	\binom{r-i}{i} + \binom{r-i-1}{i-1} = \frac{r}{r-i}\binom{r-i}{i} & \text{otherwise.}
	\end{cases}
	\]
	
	The positive integers $\ell_{r, i}$ appear as coefficients of \emph{Lucas polynomials}\footnote{There appear to be at least two different families of polynomials known as Lucas polynomials, but in both cases the nonzero coefficients are the integers $\ell_{r, i}$.} in sequence A034807 of the On-Line Encyclopedia of Integer Sequences \cite{OEIS}.
	
	We begin with a technical lemma. 
	\begin{lem}\label{lem: carta di identities}
    The following identities hold for any $n,m,r\geq0$:
	    \begin{enumerate}
	        \item \[ 1+t^r = \sum_{i=0}^{\lfloor\frac{r}{2}\rfloor} (-1)^i\ell_{r,i} t^i (1+t)^{r-2i}\]
	    \item
	    \[
	        \binom{n}{m}+\binom{n}{m-r} = \sum_{i=0}^{\lfloor\frac{r}{2}\rfloor}(-1)^i\ell_{r,i}\binom{n+r-2i}{m-i}.
	    \]
	    \end{enumerate}
	\end{lem}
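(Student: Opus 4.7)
My plan is to prove (i) first and then derive (ii) from it via a generating function manipulation.

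For (i), I would induct on $r$. The base cases $r=0,1,2$ are verified by a direct computation (note that the convention $\ell_{0,0}=2$ is exactly what makes $r=0$ work out, and for $r=1,2$ one uses that $\ell_{1,0}=1$, $\ell_{2,0}=1$, $\ell_{2,1}=2$). For the inductive step, the key is the purely formal identity
\[
1 + t^r \;=\; (1+t)(1+t^{r-1}) \;-\; t\,(1+t^{r-2}),
\]
which is immediate by expansion. Substituting the inductive hypothesis for $1+t^{r-1}$ and $1+t^{r-2}$ on the right-hand side and collecting the coefficient of $(-1)^i t^i (1+t)^{r-2i}$ on the right after a reindexing $j=i+1$ in the second sum, the identity reduces to verifying the Pascal-like recurrence
\[
\ell_{r, i} \;=\; \ell_{r-1, i} \;+\; \ell_{r-2, i-1}.
\]
Unpacking the definition $\ell_{r,i}=\binom{r-i}{i}+\binom{r-i-1}{i-1}$, this follows from two applications of the ordinary Pascal identity, namely $\binom{r-1-i}{i}+\binom{r-i-1}{i-1}=\binom{r-i}{i}$ and $\binom{r-2-i}{i-1}+\binom{r-i-2}{i-2}=\binom{r-i-1}{i-1}$. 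One should take care to treat separately the low-index boundary terms, where negative binomial coefficients are $0$ by our convention; this is mere bookkeeping.

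For (ii), I would use the binomial theorem in the form $\sum_{m \geq 0}\binom{n}{m} t^m = (1+t)^n$ to note that
\[
(1+t^r)(1+t)^n \;=\; \sum_{m}\!\left(\binom{n}{m} + \binom{n}{m-r}\right)\! t^m,
\]
so the left-hand side of (ii) is precisely the coefficient of $t^m$ in $(1+t^r)(1+t)^n$. Now I would plug in the expansion of $1+t^r$ given by (i) and use $(1+t)^{r-2i}(1+t)^n = (1+t)^{n+r-2i}$ to get
\[
(1+t^r)(1+t)^n \;=\; \sum_{i=0}^{\lfloor r/2\rfloor} (-1)^i \ell_{r,i}\, t^i\, (1+t)^{n+r-2i}.
\]
Extracting the coefficient of $t^m$ from the right-hand side yields $\sum_i (-1)^i \ell_{r,i}\binom{n+r-2i}{m-i}$, which is exactly the claimed expression.

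I do not foresee any significant obstacle: identity (i) is essentially the classical Lucas-polynomial expansion (one can alternatively arrive at it by the substitution $y = t^{1/2}+t^{-1/2}$, which turns both sides into a polynomial identity for $s^r+s^{-r}$ in $y = s+s^{-1}$), and (ii) is a formal consequence via coefficient extraction. The only delicate point is the careful handling of the edge values of $i$ in the Pascal-like recurrence and the degenerate case $(r,i)=(0,0)$ where $\ell_{0,0}=2$ rather than $1$.
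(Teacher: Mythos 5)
Your proposal is correct. It differs from the paper mainly in how part (i) is handled: the paper does not prove (i) at all but cites it as a specialization of the classical Girard--Waring identity (derivable from Newton's formulas relating power sums and elementary symmetric polynomials), whereas you give a self-contained induction on $r$ via the decomposition $1+t^r=(1+t)(1+t^{r-1})-t(1+t^{r-2})$ and the recurrence $\ell_{r,i}=\ell_{r-1,i}+\ell_{r-2,i-1}$; your inductive step and base cases check out (and, as you note, the only care needed is at the index boundaries and at $\ell_{0,0}=2$, which your base cases $r=0,1,2$ absorb). Your parenthetical remark about the substitution $y=t^{1/2}+t^{-1/2}$, turning (i) into the expansion of $s^r+s^{-r}$ in $s+s^{-1}$, is exactly the classical form the paper invokes. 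For part (ii) the two arguments are essentially the same computation in different clothing: the paper expands $(1+t)^{r-2i}$, regards (i) as an identity in the free $\mathbb{Z}$-module on $1,t,\dots,t^r$, formally substitutes $t^i\mapsto\binom{n}{m-i}$, and finishes with Vandermonde's identity, which is precisely your step of multiplying (i) by $(1+t)^n$ and extracting the coefficient of $t^m$. The trade-off is that your route makes the lemma fully self-contained at the price of some bookkeeping in the induction, while the paper's is shorter by outsourcing (i) to the literature.
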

	\begin{proof}
	Statement $i$ is a specialization of a classical identity due to Girard and Waring \cite[Identity 1]{Gould} which can be derived from the Newton formulas relating power sums and elementary symmetric polynomials.
	
	Expanding $(1+t)^{r-2i}$ inside identity $i$ yields that
	\[1 + t^r = \sum_{i=0}^{\lfloor\frac{r}{2}\rfloor}\sum_{j=0}^{r-2i}(-1)^i\ell_{r,i}\binom{r-2i}{j}t^{i+j}.\]
	This is now an identity inside the free $\mathbb{Z}$-module with basis $\{1, t, t^2, \ldots, t^r\}$. We can now specialize this by substituting each $t^i$ with the binomial coefficient $\binom{n}{m-i}$, obtaining \[\binom{n}{m}+\binom{n}{m-r} = \sum_{i=0}^{\lfloor\frac{r}{2}\rfloor}(-1)^i\ell_{r,i}\left(\sum_{j=0}^{r-2i}\binom{r-2i}{j}\binom{n}{m-i-j}\right).\] Applying Vandermonde's identity for binomial coefficients yields identity $ii$.
	\end{proof}
		
	\begin{prop}\label{prop: formula gamma}
	    Let $\Delta$ be a $(d-1)$-dimensional pure simplicial complex. Then
	    \[
	        \gamma_i(R_{\Delta})=(-1)^{i-1}\sum_{k=2i-1}^d \ell_{k-1, i-1}h_k(\Delta),
	    \] 
	    for every $i\geq 1$. In particular, each $\gamma_i(R_{\Delta})$ is $(-1)^{i-1}$ times a linear combination of the $h_k(\Delta)$ with positive integer coefficients.
	\end{prop}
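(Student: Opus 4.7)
The plan is to express the Hilbert series of $R_{\Delta}$ in the basis $\{t^j(1+t)^{d+1-2j}\}$ that encodes the $\gamma$-vector, by combining standard Stanley--Reisner bookkeeping with \Cref{lem: carta di identities}.i.

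First I would rewrite $h(R_{\Delta}, t)$ in a compact form. Setting $f_{\Delta}(t) := \sum_{i=0}^d f_{i-1}(\Delta)\, t^i$ with $f_{-1}=1$, the $h$-vector description recalled in \Cref{rem:hvector_RDelta}, together with a reverse-polynomial manipulation of the ``second half'' $\sum_{i=1}^d f_{d-i}(\Delta) t^i$, yields the clean identity
\[
h(R_{\Delta}, t) \;=\; f_{\Delta}(t) + t^{d+1} f_{\Delta}(1/t).
\]
Feeding the $f$-to-$h$ conversion $f_{\Delta}(t) = \sum_{k=0}^d h_k(\Delta)\, t^k(1+t)^{d-k}$ (equivalent to \eqref{eq: f from h}) into both summands, and using the elementary simplification $t^{d+1}\cdot t^{-k}(1+t^{-1})^{d-k} = t(1+t)^{d-k}$, the right-hand side collapses to
\[
h(R_{\Delta}, t) \;=\; \sum_{k=0}^d h_k(\Delta)\,(1+t)^{d-k}\,(t^k + t).
\]

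Next I would perform the change of basis. The $k=0$ summand is simply $(1+t)^{d+1}$, accounting for $\gamma_0(R_{\Delta})=1$. For $k \geq 1$ I factor $t^k + t = t(1+t^{k-1})$ and apply \Cref{lem: carta di identities}.i with $r = k-1$ to expand $1 + t^{k-1}$ as a signed combination of $t^i(1+t)^{k-1-2i}$. After multiplying by $t(1+t)^{d-k}$ and reindexing $j := i+1$, each $k\geq 1$ summand becomes
\[
\sum_{j=1}^{\lfloor(k+1)/2\rfloor}(-1)^{j-1}\ell_{k-1,\, j-1}\; t^j(1+t)^{d+1-2j},
\]
and the constraint $j \leq \lfloor(k+1)/2\rfloor$ translates exactly into the summation bound $k \geq 2j-1$ that appears in the statement.

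Finally, reading off the coefficient of $t^j(1+t)^{d+1-2j}$ and comparing with \Cref{def: gamma vector} delivers the target formula for all $i \geq 1$. The edge case $i = k = 1$, which invokes the exceptional value $\ell_{0,0}=2$, is handled correctly because $t^k + t = 2t$ already; and since each $\ell_{k-1,i-1}$ is a positive integer, the nonnegativity of $(-1)^{i-1}\gamma_i(R_{\Delta})$ whenever $h(\Delta) \geq 0$ is immediate. There is no genuine obstacle here: the argument is a sequence of orderly manipulations, and the only care needed is to track indices faithfully across the basis change.
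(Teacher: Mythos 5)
Your proof is correct, and it takes a genuinely different route from the one in the paper. You work globally with the $h$-polynomial: the identity $h(R_{\Delta},t)=f_{\Delta}(t)+t^{d+1}f_{\Delta}(1/t)$ is exactly the content of \Cref{rem:hvector_RDelta}, the substitution $f_{\Delta}(t)=\sum_k h_k(\Delta)t^k(1+t)^{d-k}$ is equivalent to \eqref{eq: f from h}, and your simplification $t^{d+1}t^{-k}(1+t^{-1})^{d-k}=t(1+t)^{d-k}$ is right, giving $h(R_{\Delta},t)=\sum_{k}h_k(\Delta)(1+t)^{d-k}(t^k+t)$. Factoring $t^k+t=t(1+t^{k-1})$ and invoking only \Cref{lem: carta di identities}.i with $r=k-1$ then expresses $h(R_{\Delta},t)$ directly in Gal's basis $\{t^j(1+t)^{d+1-2j}\}$, and since that basis is linearly independent on palindromic polynomials of degree $d+1$, the coefficients you read off are the $\gamma_i$ by \eqref{eq: gamma_def}; the bound $j\le\lfloor (k+1)/2\rfloor$ does translate into $k\ge 2j-1$, and the $k=1$ case correctly produces the exceptional $\ell_{0,0}=2$. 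The paper instead argues coefficientwise by induction on $i$, using the recursion \eqref{eq: recursive} together with the binomial identity of \Cref{lem: carta di identities}.ii (itself derived from part i via Vandermonde) to show that $[\gamma_i]_k$ vanishes for $k\le 2i-2$ and equals $(-1)^{i-1}\ell_{k-1,i-1}$ otherwise. Your version is shorter, avoids induction and part ii of the lemma entirely, and makes transparent why the Lucas coefficients appear (they are just the expansion of $1+t^{k-1}$ in the palindromic basis); the paper's inductive computation, on the other hand, stays closer to the recursive definition of the $\gamma$-vector and exhibits explicitly the cancellation of the coefficients of $h_k$ for small $k$.
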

	
	\begin{proof}
	    Throughout this proof we will set $\gamma_j := \gamma_j(R_{\Delta})$, $h_j := h_j(\Delta)$ and $f_j := f_j(\Delta)$. We prove the desired formula for $\gamma_i$ by induction on $i \geq 1$.
	    
	    For the base case $i=1$ it suffices to observe that $\gamma_1=h_{1}(R_{\Delta})-d-1=f_0+f_{d-1}-d-1=2h_1+\sum_{k=2}^d h_k$, where we used \eqref{eq: f from h} to express $f_0$ and $f_{d-1}$ as functions of the $h_j$.
	    
	    Let us now fix $i>1$ and assume that the claim holds for every $1 \leq j < i$. \eqref{eq: recursive} gives us a way to express $\gamma_i$ as a function of the $\gamma_j$ with $j < i$; moreover, using again \eqref{eq: f from h}, we can explicitly write each $h_i(R_{\Delta})=f_{i-1}+f_{d-i}$ as a combination of the $h_i = h_i(\Delta)$. Putting together these two facts with the inductive hypothesis on the $\gamma_j$ with $j < i$ and the observation that $\gamma_0 = h_0 = 1$, we can write
	    \begin{align*}
	        \gamma_i &= h_i(R_{\Delta}) - \sum_{j=0}^{i-1}\binom{d+1-2j}{i-j}\gamma_j\\
	        &=f_{i-1}+f_{d-i}-\binom{d+1}{i}\gamma_0-\sum_{j=1}^{i-1}\binom{d+1-2j}{i-j}\gamma_j\\
	        &=\sum_{k=0}^i \binom{d-k}{d-i}h_k+\sum_{k=0}^{d-i+1}\binom{d-k}{i-1}h_k-\binom{d+1}{i}h_0-\sum_{j=0}^{i-2}\binom{d-1-2j}{i-j-1}\gamma_{j+1}\\
	        &=\sum_{k=0}^i \binom{d-k}{d-i}h_k+\sum_{k=0}^{d-i+1}\binom{d-k}{i-1}h_k-\binom{d+1}{i}h_0-\sum_{j=0}^{i-2}\binom{d-1-2j}{i-j-1}(-1)^{j}\sum_{s=2j+1}^{d}\ell_{s-1, j}h_s.
	    \end{align*}

	   We have thus obtained an expression for $\gamma_i$ as a linear combination of the $h_j$. We will denote by $[\gamma_i]_k$ the coefficient of $h_k$ in this expression and will analyze such coefficients separately.
	   
	   One has immediately that $[\gamma_i]_0 = \binom{d}{i}+\binom{d}{i-1}-\binom{d+1}{i}=0$. When $0 < k \leq d$, we obtain
	   
	   \begin{equation} \label{eq:kth_coefficient}
	        [\gamma_i]_k=\binom{d-k}{d-i}+\binom{d-k}{i-1}-\sum_{j=0}^{\min\{i-2, \lfloor\frac{k-1}{2}\rfloor\}}\binom{d-1-2j}{i-j-1}(-1)^{j}\ell_{k-1, j}.
	   \end{equation}
	   
	   We wish to rewrite $\binom{d-k}{d-i}+\binom{d-k}{i-1}$ in the above expression. Noting that $\binom{d-k}{d-i} = \binom{d-k}{i-1-(k-1)}$, we are in the position to apply \Cref{lem: carta di identities}.ii with $n=d-k$, $m=i-1$ and $r=k-1$, obtaining that \[\binom{d-k}{d-i}+\binom{d-k}{i-1}=\sum_{j=0}^{\lfloor\frac{k-1}{2}\rfloor}\binom{d-1-2j}{i-j-1}(-1)^j \ell_{k-1, j}.\]
	   
	   Hence, \eqref{eq:kth_coefficient} immediately yields that $[\gamma_i]_k = 0$ whenever $\min\{i-2, \lfloor\frac{k-1}{2}\rfloor\} = \lfloor\frac{k-1}{2}\rfloor,$ which happens when $k \leq 2i-2$.
	   
	   Now assume that $2i-1 \leq k \leq d$. Then $\min\{i-2, \lfloor\frac{k-1}{2}\rfloor\} = i-2$, and so  \eqref{eq:kth_coefficient} becomes
	   \[[\gamma_i]_k = \sum_{j=i-1}^{\lfloor\frac{k-1}{2}\rfloor}\binom{d-1-2j}{i-j-1}(-1)^{j}\ell_{k-1, j} = (-1)^{i-1}\ell_{k-1, i-1},\]
	   with the last equality holding since the binomial coefficient $\binom{d-1-2j}{i-j-1}$ equals $1$ when $j=i-1$ and vanishes otherwise.
	\end{proof}
  
	\begin{coro}\label{cor: gamma alternate}
	    Let $\Delta$ be a  $(d-1)$-dimensional simplicial complex for which $h_i(\Delta)\geq 0$ holds for every $0\leq i \leq d$. Then 
	    \[
	        (-1)^{i-1}\gamma_i(R_{\Delta})\geq 0,
	    \]     
	    for every $1 \leq i \leq \lfloor\frac{d+1}{2}\rfloor$.
	\end{coro}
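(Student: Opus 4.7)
The plan is very short, because this corollary is an almost immediate consequence of the explicit formula proved in \Cref{prop: formula gamma}. That proposition expresses
\[
\gamma_i(R_{\Delta}) = (-1)^{i-1}\sum_{k=2i-1}^{d} \ell_{k-1,\,i-1}\,h_k(\Delta),
\]
where $\ell_{k-1,i-1} = \binom{k-i}{i-1} + \binom{k-i-1}{i-2}$ (with the usual convention that binomials with negative entries vanish). The first step is simply to invoke this formula for $1 \leq i \leq \lfloor\frac{d+1}{2}\rfloor$, which ensures the sum on the right-hand side is nonempty, since $2i-1 \leq d$ in this range.

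Next I would observe that every coefficient $\ell_{k-1,i-1}$ appearing in the sum is a nonnegative integer: this is visible directly from the binomial-sum expression, and for $i=1, k=1$ one gets $\ell_{0,0} = 2 > 0$ while all other relevant $\ell_{k-1,i-1}$ are positive as soon as $k \geq 2i-1$. Combined with the hypothesis that $h_k(\Delta) \geq 0$ for every $0 \leq k \leq d$, the inner sum $\sum_{k=2i-1}^{d}\ell_{k-1,i-1}h_k(\Delta)$ is a nonnegative real number. Multiplying both sides of the displayed formula by $(-1)^{i-1}$ yields
\[
(-1)^{i-1}\gamma_i(R_{\Delta}) = \sum_{k=2i-1}^{d}\ell_{k-1,\,i-1}\,h_k(\Delta) \geq 0,
\]
which is exactly the claim.

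There is essentially no obstacle: the entire content of the corollary is already packaged inside \Cref{prop: formula gamma}, and the only thing to verify is the manifest positivity of the Lucas-type coefficients $\ell_{r,i}$. The slight care needed is only in the edge cases of the indexing (checking $2i-1 \leq d$ when $i \leq \lfloor\frac{d+1}{2}\rfloor$, and noting that $i=1$ uses $\ell_{0,0}=2$ rather than the binomial-sum formula), but these are trivially verified.
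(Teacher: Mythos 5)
Your proposal is correct and is exactly the paper's intended argument: the corollary is an immediate consequence of \Cref{prop: formula gamma}, whose statement already records that $\gamma_i(R_{\Delta})$ is $(-1)^{i-1}$ times a nonnegative (indeed positive) integer combination of the $h_k(\Delta)$, so nonnegativity of the $h$-vector gives the sign alternation at once. Your extra care about the indexing edge cases and the value $\ell_{0,0}=2$ is harmless and matches the conventions set in the paper.
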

	In particular, the algebra $R_{\Delta}$ has 
	$\gamma$-numbers which alternate in sign whenever $\Delta$ is Cohen--Macaulay. However, the hypothesis in \Cref{cor: gamma alternate} is satisfied by larger families of simplicial complexes, such as partitionable complexes.

	\begin{rema}
	    We would like to stress that the formula in \Cref{prop: formula gamma} is of purely combinatorial nature, and it holds for more general integer vectors than $f$- and $h$-vectors of simplicial complexes.  Let $\mathfrak{a}=(a_0,a_1,\dots,a_{d+1})$ be any sequence of integers with $a_i=a_{d+1-i}$, and let $\mathfrak{b}=(b_0,b_1,\dots,b_d)$ be a sequence which satisfies $b_0=a_0$ and $b_i+b_{d+1-i}=a_i$, for every $0<i\leq d$. \Cref{prop: formula gamma} allows to express the $\gamma$-vector of $\mathfrak{a}$ as a linear combination of the entries of a vector $\mathfrak{c}$, obtained from $\mathfrak{b}$ via the linear transformation \eqref{eq: h from f}.
	\end{rema}
	
	\section{\texorpdfstring{An Artinian reduction of $R_{\Delta}$ and a connection to work of Gondim and Zappal\`a}{An Artinian reduction of RDelta and a connection to work of Gondim and Zappal\`a}}
	\label{sec: GZ}
	After the first version of this paper was completed, we noticed a connection between the algebras $R_{\Delta}$ defined here and the Artinian Gorenstein algebras $A_{\Delta}$ introduced by Gondim and Zappal\`a in their paper \cite{GZ}: more specifically, $A_{\Delta}$ is an Artinian reduction of $R_{\Delta}$ in characteristic zero. The aim of this final section is to clarify this connection.
	
	For this section, let the characteristic of the field $\mathbb{F}$ be zero\footnote{This is only needed in order to use Macaulay's inverse system, but does not really play a role in our observations: see Remark \ref{rem: char zero}.}. Under this assumption, it is known that every graded Artinian Gorenstein algebra corresponds to a single homogeneous polynomial $f$ via Macaulay's inverse system (see e.g.~\cite[Section 0.2]{IK}). The key idea of the paper by Gondim and Zappal\`a is the following: given a pure simplicial complex $\Delta$ with $n$ vertices, consider the polynomial ring $\mathbb{F}[x_i, z_F \mid i \in \{1, \ldots, n\},\  F \text{ facet of }\Delta]$ and let \[f_{\Delta} := \sum_{F \text{ facet of }\Delta}z_F \cdot \prod_{i \in F}x_i.\]
	
	Call $A_{\Delta}$ the Artinian Gorenstein algebra corresponding to $f_{\Delta}$ via Macaulay's inverse system. After unifying the notation (as the $U$ and $X$-variables in \cite{GZ} are respectively our $x$- and $z$-variables), an analysis of the presentation of $A_{\Delta}$ exhibited in \cite[Theorem 3.2.5]{GZ} reveals that $A_{\Delta} = R_{\Delta} / (y_i - x_i \mid i \in \{1, \ldots, n\})$. Our first observation is then that $y_1 - x_1, \ldots, y_n - x_n$ is a regular sequence of linear forms:
	
	\begin{prop} \label{prop: ADelta from RDelta}
	Let $\Delta$ be a pure simplicial complex and let $\mathrm{char}(\mathbb{F}) = 0$. Then $A_{\Delta} = R_{\Delta} / (y_i - x_i \mid i \in \{1, \ldots, n\})$, and $y_1 - x_1, \ldots, y_n - x_n$ is an $R_{\Delta}$-regular sequence of linear forms.
	\end{prop}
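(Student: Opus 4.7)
The plan is to verify the two claims in turn. For the identification $A_{\Delta} = R_{\Delta}/(y_i - x_i \mid i \in \{1, \ldots, n\})$, I would substitute $y_i \mapsto x_i$ in the characteristic-free presentation of $R_{\Delta}$ given by \Cref{prop:RDelta_presentation}. Under this substitution: each monomial $x_i y_i$ becomes the square $x_i^2$, each binomial $b_{F_1, F_2} = \mathbf{y}^{F_1\setminus F_2} z_{F_1} - \mathbf{y}^{F_2 \setminus F_1} z_{F_2}$ becomes $\mathbf{x}^{F_1 \setminus F_2} z_{F_1} - \mathbf{x}^{F_2 \setminus F_1} z_{F_2}$, and the Stanley--Reisner generators $\mathbf{x}^{N}$, the products $x_i z_F$ with $i \notin F$, and the quadrics $z_{F_1} z_{F_2}$ are left formally unchanged. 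The resulting quadratic presentation on the ring $\mathbb{F}[x_1, \ldots, x_n, z_F \mid F \in \mathcal{F}(\Delta)]$ can then be compared termwise with the presentation of $A_{\Delta}$ extracted from \cite[Theorem 3.2.5]{GZ} after translating the notational conventions (the $U$- and $X$-variables of \cite{GZ} correspond respectively to our $x$- and $z$-variables).

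With the identification in hand, the regularity of $y_1 - x_1, \ldots, y_n - x_n$ follows from general Cohen--Macaulay principles. Indeed, by \Cref{notat:main} the algebra $R_{\Delta}$ is a standard graded Cohen--Macaulay (in fact Gorenstein) $\mathbb{F}$-algebra of Krull dimension $n$. Since $A_{\Delta}$ is Artinian by construction via Macaulay inverse systems on the single polynomial $f_{\Delta}$, the quotient $R_{\Delta}/(y_i - x_i \mid i \in \{1, \ldots, n\})$ has Krull dimension zero. A sequence of $n = \dim R_{\Delta}$ homogeneous elements in a graded Cohen--Macaulay algebra whose quotient is Artinian is necessarily a homogeneous system of parameters, and in a Cohen--Macaulay ring every homogeneous system of parameters is a regular sequence.

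The only genuinely non-routine step is the book-keeping involved in matching the two presentations: the ideal defining $A_{\Delta}$ in \cite{GZ} is computed via apolarity from $f_{\Delta}$, whereas the ideal defining $R_{\Delta}$ arises here through Nagata idealization and a Schreyer-style syzygy analysis, so some care is needed to confirm that the generating sets coincide after the substitution $y_i \mapsto x_i$. Once this termwise match is carried out (with the aid of the explicit generator list in \Cref{prop:RDelta_presentation}), the Cohen--Macaulay argument for regularity is immediate.
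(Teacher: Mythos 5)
Your proposal is correct and follows essentially the same route as the paper: the identification $A_{\Delta} = R_{\Delta}/(y_i - x_i)$ is obtained by matching the substituted presentation from \Cref{prop:RDelta_presentation} against \cite[Theorem 3.2.5]{GZ}, and the regularity of the linear forms follows from the Cohen--Macaulayness of $R_{\Delta}$ together with the dimension count $\dim R_{\Delta} = n$ and the Artinianness of the quotient (the paper cites the graded version of \cite[Theorem 2.1.2.c]{BH} for exactly your h.s.o.p. argument). One tiny wording slip: since $\Delta$ is only assumed pure (not flag), the substituted presentation need not be quadratic --- the nonface monomials $\mathbf{x}^N$ and the binomials $b_{F_1,F_2}$ can have higher degree --- but this has no bearing on the validity of the comparison.
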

	\begin{proof}
	Since $R_{\Delta}$ is Cohen--Macaulay, it is enough to prove that $\dim R_{\Delta} = n + \dim R_{\Delta} / (y_i - x_i \mid i \in \{1, \ldots, n\})$, see e.g.~the graded version of \cite[Theorem 2.1.2.c]{BH}. Since $R_{\Delta} / (y_i - x_i \mid i \in \{1, \ldots, n\})$ is Artinian and the Krull dimension of $R_{\Delta}$ equals $n$ (check Notation \ref{notat:main}), the claim follows.
	\end{proof}
	
	\begin{coro} \label{cor: ADelta Koszul}
	Let $\Delta$ be a pure flag simplicial complex and let $\mathrm{char}(\mathbb{F}) = 0$. Then the following conditions are equivalent: 
	\begin{enumerate}
	\item $A_{\Delta}$ is Koszul;
	\item $R_{\Delta}$ is Koszul;
	\item $\Delta$ is Cohen--Macaulay over $\mathbb{F}$.
	\end{enumerate}
	\end{coro}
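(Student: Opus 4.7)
The plan is to leverage \Cref{cor: koszul_iff_cm} (which already gives (2)$\iff$(3)) and reduce the whole statement to proving the equivalence (1)$\iff$(2). By \Cref{prop: ADelta from RDelta}, the algebra $A_{\Delta}$ is obtained from $R_{\Delta}$ by quotienting out the regular sequence of linear forms $y_1-x_1, \ldots, y_n-x_n$. Hence, it suffices to invoke the well-known fact that the Koszul property of a standard graded $\mathbb{F}$-algebra is preserved in both directions when passing to the quotient by a regular sequence of linear forms.

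More precisely, I would argue by induction on $n$ that if $\ell$ is an $R$-regular linear form, then $R$ is Koszul if and only if $R/(\ell)$ is Koszul. One direction follows from the standard comparison of Poincaré series of $\mathbb{F}$ over $R$ and over $R/(\ell)$: a minimal free resolution of $\mathbb{F}$ over $R$ can be obtained from one over $R/(\ell)$ by tensoring with the Koszul complex on $\ell$, which yields $P_R(s,t) = (1+st) \cdot P_{R/(\ell)}(s,t)$ after a standard computation (or, equivalently, the result is recorded in \cite[Proposition~3.3.5]{CDR} or similar references on Koszul algebras). Since $(1+st)$ is a polynomial in $st$, we have $P_R(s,t) \in \mathbb{Z}[[st]]$ if and only if $P_{R/(\ell)}(s,t) \in \mathbb{Z}[[st]]$, which is precisely the defining characterization of Koszulness.

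Applying this iteratively to the regular sequence $y_1-x_1, \ldots, y_n-x_n$ provided by \Cref{prop: ADelta from RDelta}, we conclude that $A_{\Delta}$ is Koszul if and only if $R_{\Delta}$ is Koszul, establishing (1)$\iff$(2). Combined with \Cref{cor: koszul_iff_cm}, this closes the cycle of equivalences.

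The main obstacle is essentially a bookkeeping one: making sure that the invocation of the ``Koszul descends along and lifts through regular linear sequences'' fact is clean and references a standard source. Since the proof is a direct combination of \Cref{prop: ADelta from RDelta}, \Cref{cor: koszul_iff_cm}, and this classical fact about Koszul algebras, no genuine technical difficulty arises.
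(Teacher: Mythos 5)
Your proposal is correct and follows essentially the same route as the paper: the paper also combines \Cref{cor: koszul_iff_cm} with \Cref{prop: ADelta from RDelta} and the standard fact that Koszulness is equivalent for $B$ and $B/\ell B$ when $\ell$ is a regular linear form (the paper simply cites Backelin--Fr\"oberg for this, whereas you sketch it via the Poincar\'e series identity $P_R(s,t)=(1+st)P_{R/(\ell)}(s,t)$, which is fine, though the phrase ``tensoring with the Koszul complex on $\ell$'' glosses over the change-of-rings argument behind that identity).
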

	\begin{proof}
	Conditions ii and iii are equivalent by Corollary \ref{cor: koszul_iff_cm}. If $B$ is a standard graded $\mathbb{F}$-algebra and $\ell \in B_1$ is a regular element, one has that $B$ is Koszul if and only if $B/\ell B$ is \cite[Theorem 4.e.iv]{BF}; hence, the equivalence of conditions i and ii descends directly from Proposition \ref{prop: ADelta from RDelta}.
	\end{proof}
	
	If  $B$ is a standard graded $\mathbb{F}$-algebra, $\ell \in B_1$ is a regular element and $B/\ell{B}$ has a quadratic Gr\"obner basis, then so does $B$ \cite[Lemma 4]{Conca00}; the reverse implication does \emph{not} hold in general. In our case, however, the proof of the ``only if'' implication of Theorem \ref{thm: shellable iff qgb} goes through for $A_{\Delta}$ as well: the only difference lies in the presence of some new nontrivial $S$-pairs, namely the ones coming from a nonface-monomial $\mathbf{x}^N$ and a binomial $\mathbf{x}^{F_1 \setminus F_2}z_{F_1} - \mathbf{x}^{F_2 \setminus F_1}z_{F_2}$. It can be checked that such $S$-pairs reduce to zero. Putting these observations together, we get the following result:
	\begin{prop} \label{prop: ADelta qgb}
		Let $\Delta$ be a pure flag simplicial complex and let $\mathrm{char}(\mathbb{F}) = 0$. Then the following conditions are equivalent: 
	\begin{enumerate}
	\item $A_{\Delta}$ has a quadratic Gr\"obner basis;
	\item $R_{\Delta}$ has a quadratic Gr\"obner basis;
	\item $\Delta$ is shellable.
	\end{enumerate}
	\end{prop}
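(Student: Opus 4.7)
The equivalence of (ii) and (iii) is already provided by Theorem \ref{thm: shellable iff qgb}, so the task reduces to showing that (i) is equivalent to the other two conditions. For (i) $\Rightarrow$ (ii), the plan is to combine Proposition \ref{prop: ADelta from RDelta} with the classical result of Conca \cite[Lemma 4]{Conca00}: since $y_1 - x_1, \ldots, y_n - x_n$ is an $R_{\Delta}$-regular sequence of linear forms whose iterated quotient is $A_{\Delta}$, iterating Conca's result $n$ times lifts a quadratic Gr\"obner basis of $A_{\Delta}$ to one of $R_{\Delta}$.

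For the reverse implication (iii) $\Rightarrow$ (i), my plan is to adapt the only-if direction of the proof of Theorem \ref{thm: shellable iff qgb} to the Artinian setting. The presentation of $A_{\Delta}$ obtained from Proposition \ref{prop:RDelta_presentation} via the substitution $y_i \mapsto x_i$ has defining ideal generated by the nonface monomials $\mathbf{x}^N$, the squares $x_i^2$ (arising from $x_iy_i$), the monomials $z_Fz_G$ and $x_iz_F$ (for $i \notin F$), and the binomials $b'_{F_1, F_2} := \mathbf{x}^{F_1 \setminus F_2} z_{F_1} - \mathbf{x}^{F_2 \setminus F_1} z_{F_2}$. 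After fixing a shelling order for $\Delta$ and a compatible term order on $\F[\mathbf{x}, \mathbf{z}]$ exactly as in the original proof, the goal is to verify Buchberger's criterion. Every $S$-pair already analyzed in Theorem \ref{thm: shellable iff qgb} admits an essentially identical treatment here, since those reductions depend only on the supports of the binomials and on the combinatorics of the shelling, not on whether the non-$z$ variables carry the labels $x_i$ or $y_i$.

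The main obstacle is to deal with the genuinely new $S$-pairs, namely those between a nonface monomial $\mathbf{x}^N$ and a binomial $b'_{F_1, F_2}$: such pairs never arose in the $R_{\Delta}$ setting because the binomials there lived in the $y$-variables while the nonface monomials lived in the $x$-variables, so their leading terms were automatically coprime. Assuming the leading term of $b'_{F_1, F_2}$ is $\mathbf{x}^{F_1 \setminus F_2} z_{F_1}$, a direct calculation yields
\[
S(\mathbf{x}^N, b'_{F_1, F_2}) \;=\; \mathbf{x}^{N \setminus (F_1 \setminus F_2)} \cdot \mathbf{x}^{F_2 \setminus F_1}\, z_{F_2}.
\]
Since $N$ is a nonface and $F_1$ is a face of $\Delta$, some index $i \in N \setminus F_1$ must exist. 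If $i \notin F_2$, then $i \in N \setminus (F_1 \setminus F_2)$ and the quadratic generator $x_iz_{F_2}$ divides the displayed monomial, providing the desired reduction. If instead $i \in F_2$, then $i$ lies both in $F_2 \setminus F_1$ and in $N \setminus (F_1 \setminus F_2)$, so $x_i^2$ divides the monomial and one can reduce by the generator $x_i^2$. Once this verification is completed, Buchberger's criterion delivers a quadratic Gr\"obner basis for $A_{\Delta}$, finishing the proof.
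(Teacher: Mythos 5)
Your proposal is correct and follows essentially the same route as the paper: Conca's lifting lemma together with Proposition \ref{prop: ADelta from RDelta} gives (i) $\Rightarrow$ (ii), Theorem \ref{thm: shellable iff qgb} gives (ii) $\Leftrightarrow$ (iii), and (iii) $\Rightarrow$ (i) is obtained by rerunning the ``only if'' direction of that theorem, the only new ingredient being the $S$-pairs between nonface monomials and the binomials $\mathbf{x}^{F_1\setminus F_2}z_{F_1}-\mathbf{x}^{F_2\setminus F_1}z_{F_2}$. Your explicit computation of these new $S$-pairs (which the paper leaves as ``it can be checked'') is correct: picking $i\in N\setminus F_1$ and reducing by $x_iz_{F_2}$ or $x_i^2$ according to whether $i\notin F_2$ or $i\in F_2$ does kill the residual monomial.
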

	
	\begin{rema} \label{rem: char zero}
	The running hypothesis of this section about characteristic zero is needed just to fit the original definition of $A_{\Delta}$ by Gondim and Zappal\`a, where Macaulay's inverse system is used. However, the proof that $y_1 - x_1, \ldots, y_n - x_n$ is an $R_{\Delta}$-regular sequence of linear forms is characteristic-free, and so are the proofs of Corollary \ref{cor: ADelta Koszul} and Proposition \ref{prop: ADelta qgb}, if we substitute ``$A_{\Delta}$'' by ``$R_{\Delta} / (y_i - x_i \mid i \in \{1, \ldots, n\})$'' in the statements. In particular, the results in Section \ref{subsec: Koszulness and characteristic} and Section \ref{sec: gamma} can be adapted to fit the Artinian setting.
	\end{rema}
	
	\begin{rema} \label{rem: quadratic ADelta}
	Because of Proposition \ref{prop: ADelta from RDelta}, the algebra $A_{\Delta}$ has a quadratic defining ideal precisely when $R_{\Delta}$ does; in particular, the characterizations of quadraticity in \cite[Theorem 3.5]{GZ} and in our Proposition \ref{prop:superlevel} should coincide. However, Proposition \ref{prop:superlevel} requires $\Delta$ to be flag and $(S_2)$, while \cite[Theorem 3.5]{GZ} only asks for $\Delta$ to be flag and strongly connected, which is weaker.

We claim that the complex $\Delta$ with facet list $\{123, 235, 245, 457, 567, 167\}$ provides a counterexample to \cite[Theorem 3.5]{GZ}: indeed, $\Delta$ is strongly connected but not $(S_2)$, and the cubical generator $x_2x_3z_{123} - x_6x_7z_{167}$ is needed in the presentation of $A_{\Delta}$, as can be checked for instance with Macaulay2 using the InverseSystems package \cite{ISPackage}.

Note that the examples constructed via Tur{\'a}n complexes in \cite{GZ} are still valid: since Tur{\'a}n complexes are flag and Cohen--Macaulay, by Corollary \ref{cor: ADelta Koszul} the associated algebras are Koszul, and hence quadratically presented.
	\end{rema}

\settocdepth{part}
\section*{Funding}
The first-named author was partially supported by the EPSRC grant EP/R02300X/1. The second-named author is funded by the G\"oran Gustafsson foundation. 

\section*{Acknowledgements}
The first seeds for this paper were planted in June 2019, when the first-named author found counterexamples to the algebraic generalization of the Charney--Davis conjecture mentioned above via some Macaulay2 \cite{M2} experiments. The project then took shape in December 2019, while the first-named author was visiting the second-named author at the Max Planck Institute for Mathematics in the Sciences in Leipzig, Germany. The first-named author is grateful to the institute for the hospitality.

We wish to thank Aldo Conca, Rodrigo Gondim, Vic Reiner, Naoki Terai, Matteo Varbaro, Volkmar Welker and Giuseppe Zappal\`a for useful discussions, and an anonymous referee for their thoughtful comments. Special thanks go to Eran Nevo for an insightful exchange of emails which put us on the right track. Finally, we thank Bruno Vallette for pointing us to further connections between the Koszul and Cohen--Macaulay properties in the literature.

	
\bibliographystyle{alpha}
\bibliography{bibliography}
\end{document}